\author{Antoine Ducros}
\address{Sorbonne Université, Université Paris-Diderot, CNRS, Institut de Mathématiques de Jussieu-Paris
Rive Gauche, IMJ-PRG, DMA, PSL,
École normale supérieure,
F-75005, Paris, France
}
\email{antoine.ducros\at imj-prg.fr}
\urladdr{http://www.imj-prg.fr/$\sim$antoine.ducros}
\title{Réduction en famille d'espaces affinoïdes}
\thanks{}
\newcommand{\eg}{e.\@g.\@}
\newcommand{\ie}{i.\@e.\@}
\newcommand{\abs}[1]{\mathopen|#1\mathclose|}
\newcommand{\an}{^{\mathrm{an}}}
\newcommand{\gpm}{^{\times}}
\newcommand{\hotimes}{\hat \otimes}
\newcommand{\hr}[1]{\mathscr H(#1)}
\newcommand{\hrt}[1]{\widetilde{\mathscr H(#1)}}
\newcommand{\inv}{^{-1}}
\newcommand{\alg}[1]{\widehat {{#1}^{\mathrm a}}}
\newcommand{\norm}[1]{\mathopen\|#1\mathclose \|}
\def\spec{\mathop{\mathrm{Spec}}}
\newcommand{\spv}{\mathrm{Spv}}
\newcommand{\A}{\mathbf A}
\newcommand{\N}{\mathbf Z_{\geq 0}}
\renewcommand{\P}{\mathbf P}
\newcommand{\Q}{\mathbf Q}
\newcommand{\R}{\mathbf R}
\newcommand{\Z}{\mathbf Z}
\renewcommand{\phi}{\varphi}
\renewcommand{\epsilon}{\varepsilon}
\renewcommand{\leq}{\leqslant}
\renewcommand{\geq}{\geqslant}
\begin{document}

\thanks{Lors
de la rédaction de cet article, l'auteur a bénéficié du soutien de l'ANR à travers les projets {\em Valuations, combinatoire et théorie des modèles} (ANR-13-BS01-0006), 
et  {\em Définissabilité en géométrie non archimédienne}
(ANR-15-CE40-0008), ainsi que de celui de l'IUF dont il était membre junior d'octobre 2012 à octobre 2017. Il a aussi profité en mars 2019 de l'hospitalité de l'université hébraïque de Jérusalem, avec le soutien du projet ERC Consolidator  770922 (BirNonArchGeom) de Michael Temkin}

\maketitle
\begin{abstract}
Soit $k$ un corps ultramétrique complet. Nous démontrons un substitut au théorème de la fibre réduite (de Bosch, Lütkebohmert et Raynaud) valable pour tout morphisme $Y\to X$ plat et à fibres géométriquement réduites entre espaces $k$-affinoïdes au sens de Berkovich, sans supposer que $X$ et $Y$ sont stricts ni que la dimension relative de $Y$ sur $X$ est constante. Nous ne faisons pas appel au théorème de la fibre réduite original, ni aux techniques ou au langage de la géométrie formelle. Notre énoncé est formulé en termes de réduction graduée à la Temkin ;  notre preuve repose sur un théorème de finitude de Grauert et Remmert et sur la théorie de la réduction (graduée) des germes d'espaces analytiques, due à Temkin. 
\end{abstract}

\begin{altabstract}
Let $k$ be a non-archimedean complete field. We prove a substitute for the reduced fiber theorem (of  Bosch, Lütkebohmert and Raynaud) that holds for every morphism $Y\to X$ flat and with geometrically reduced fibers between $k$-affinoid spaces in the sense of Berkovich, without assuming that $X$ and $Y$ are strict, nor that the relative dimension of $Y$ over $X$ is constant. We do not use the original reduced fiber theorem, nor the language or the techniques of formal geometry. Our statement is formulated in terms of Temkin's graded reduction; our proof rests on a finiteness result of Grauert and Remmert and on Temkin's theory of (graded) reduction of germs of analytic spaces. 
\end{altabstract}
\setcounter{section}{-1}
\section{Introduction}

Soit $k$ un corps ultramétrique complet ; nous travaillerons avec la théorie des espaces $k$-analytiques au sens de Berkovich (\cite{berkovich1990}, \cite{berkovich1993}). Fixons un sous-groupe $\Gamma$ de $\R_{>0}$, non trivial
si $\abs{k\gpm}=\{1\}$ ; pour simplifier
la présentation, supposons que
$\Gamma$ est divisible. 
Soit $A$ une algèbre affinoïde \emph{$\Gamma$-stricte},
c'est-à-dire quotient d'une algèbre de la forme
$k\{T_1/r_1,\ldots, T_n/r_n\}$
où les $r_i$ appartiennent à $\Gamma$ ;
soit $\norm \cdot$ la semi-norme spectrale de $A$ (qui est
une norme si et seulement si $A$
est réduite). Lorsque $\Gamma=\{1\}$ on associe classiquement à
l'algèbre $A$
sa \emph{réduction}
\[\widetilde A:=\{a\in A, \norm a\leq 1\}
/\{a \in A, \norm a <1\}.\]

Lorsque $\Gamma$ n'est pas supposé trivial
cette définition garde un sens, mais l'anneau obtenu
est en général trop petit pour être exploitable
(par exemple, si les $r_i$ sont $\Q$-linéairement
indépendants modulo $\abs {k\gpm} ^\Q$, la
réduction de $k\{T_1/r_1,\ldots, T_n/r_n\}$ au
sens précédent est 
simplement 
le corps résiduel de $k$).
Le bon objet à considérer 
dans ce cas est la réduction \emph{graduée}
de $A$ introduite par Temkin dans \cite{temkin2004}. 

Avant de donner sa construction, précisons que pour éviter une répétition
fastidieuse des adjectifs «homogène» et «gradué», nous avons
opté pour un formalisme qui
diffère en apparence
de celui de Temkin, tout en
lui étant rigoureusement équivalent : nous définissons un \emph{annéloïde} comme 
une \emph{union disjointe}
$R=\coprod_{r\in \Gamma}R^r$ où
chaque $R^r$ est un groupe abélien
noté additivement, munie d'une collection d'applications
bi-additives
$R^r\times R^s\to R^{rs}$ 
définissant sur $R$ une loi commutative, associative,
et possédant un élément neutre $1\in R^1$. Un \emph{corpoïde} est un annéloïde non nul dans lequel tout élément non nul est inversible. La plupart des définitions et résultats de base de l'algèbre commutative et de la géométrie algébrique (idéaux, idéaux premiers,  produit tensoriel, valuations, spectres et même schémas, etc.)
admettent des avatars dans ce nouveau cadre, 
que nous utiliserons librement dans ce qui suit ; nous renvoyons à la section \ref{s-alg-grad}
pour davantage de précisions. 

Soit $A$ une algèbre $k$-affinoïde $\Gamma$-stricte ; posons $X=\mathscr M(A)$.  On définit
d'après Temkin la \emph{réduction graduée} (ou encore l'\emph{annéloïde résiduel}) $\widetilde A$ de $A$ par
la formule \[\widetilde A:=\coprod_{r \in \Gamma}
\{a\in A, \norm a\leq r\}
/\{a \in A, \norm a <r\}.\]
C'est une algèbre de type fini sur le \emph{corpoïde résiduel}
$\widetilde k$ de $k$ ; par exemple,
si $A$ est égale à $k\{T_1/r_1,\ldots, T_n/r_n\}$
alors $\widetilde A=\widetilde k[r_1\backslash T_1,\ldots, r_n\backslash T_n]$, 
qui désigne l'algèbre commutative libre engendrée sur $\widetilde k$ par des indéterminées $T_1,\ldots, T_n$ de degrés
respectifs $r_1,\ldots, r_n$
(\ref{ss-notation-tgamma}). Si $a$ est un élément de $A$ tel que $\norm a\in \Gamma$, nous noterons
$\widetilde a$ son image dans $\widetilde A^{\norm a}$. Nous désignerons par $\widetilde X$ le spectre de $\widetilde A$. La dimension de $\widetilde X$ est égale à celle de 
$X$, 
et l'on dispose d'une application anti-continue surjective
naturelle $X\to \widetilde X$.

Cette opération de réduction (graduée) des espaces affinoïdes est un outil majeur en géométrie analytique,
qui permet de convertir un certain
nombre de questions en des problèmes
de géométrie algébrique (éventuellement graduée,
mais on se ramène ensuite le plus souvent par des méthodes
standard
décrites à la section
\ref{s-alg-grad} à la géométrie algébrique usuelle). Elle est toutefois un peu
délicate
à utiliser, car si elle est fonctorielle, elle ne commute pas aux opérations usuelles.
En effet, soient $X$ et $Y$
deux espaces $k$-affinoïdes $\Gamma$-stricts, et soit $Z$ un espace affinoïde $\Gamma$-strict sur une
extension
de $k$. Supposons donnés
deux morphismes $Y\to X$ et $Z\to X$. On dispose
alors d'un morphisme naturel
$\widetilde{Y\times_X Z}\to
\widetilde Y\times_{\widetilde X}\times \widetilde Z$ qui est fini,
mais n'est pas un isomorphisme en général.
En particulier (prendre $Z=\mathscr M(\hr x)$ où $x\in X$) 
\emph{la formation de la réduction
ne commute pas à la formation des fibres}. 
Cet article remédie dans certaines situations à ce dernier point.

Avant de décrire plus avant nos résultats,
commençons par une définition. Soit
$A\to B$
un morphisme
entre algèbres $k$-affinoïdes $\Gamma$-strictes. Posons 
$X=\mathscr M(A)$
et $Y=\mathscr M(B)$. Une
présentation $B\simeq A\{T_1/r_1,\ldots, T_n/r_n\}/(a_1,\ldots, a_m)$
de $B$ sur $A$ (ou de $Y$ sur $X$) est
\emph{$\Gamma$-sympathique} 
si les 
$r_i$ appartiennent à $\Gamma$ et si
les propriétés suivantes sont satisfaites : 

\begin{enumerate}[1]
\item pour tout $i$ compris entre $1$ et $m$, la
semi-norme spectrale $\rho_i$
de $a_i$ appartient à 
$\Gamma$ ; 
\item le
morphisme
\[p\colon \spec \widetilde A[r\backslash T]/(\widetilde{a_1},\ldots, \widetilde{a_m})
\to \widetilde X\] est plat (nous avons écrit $\widetilde A[r\backslash T]$ pour
$\widetilde A[r_1\backslash T_1,\ldots,
r_n\backslash T_n]$) ; 
\item les fibres de $p$ sont géométriquement réduites ; 
\item pour tout $x\in X$
la norme 
spectrale de $a_i|_{Y_x}$  est égale à $\rho_i$ quel que soit $i$, et
tout élément $a\in \hr x\{T/r\}$ appartenant à 
l'idéal engendré par les $a_i|_{Y_x}$ possède une écriture de la forme
$\sum b_i (a_i|_{Y_x})$ telle que $\norm {b_i}\cdot \rho_i\leq \norm a$ pour
tout $i$ ; 
\item les composantes irréductibles des fibres de $p$ sont géométriquement irréductibles ; 
\item il existe un recouvrement ouvert fini $(\Omega_j)$ de 
$ \spec \widetilde A[r\backslash T]/(\widetilde{a_1},\ldots, \widetilde{a_m})$ tel que pour tout indice
$j$ et tout $\xi\in \widetilde X$, l'intersection $p\inv(\xi)\cap \Omega_j$ soit ou bien vide, 
ou bien une composante connexe de $p\inv(\xi)$. 
\end{enumerate}
L'intérêt essentiel de cette notion de notre point de vue est le suivant : la conjonction des propriétés (3) et (4) implique que pour tout $x\in X$
dont on note $\widetilde x$ l'image sur $\widetilde X$, 
le morphisme 
naturel $\widetilde{Y_x}\to 
\spec \hrt x[r\backslash T]/(\widetilde{a_1|_{Y_x}}, \ldots, \widetilde{a_m|_{Y_x}})=p\inv(\widetilde x)_{\hrt x}$
est un isomorphisme (proposition \ref{prop-univdistingue-1}).
En un sens un peu vague, une présentation $\Gamma$-sympathique d'un morphisme $Y\to X$ induit donc par réduction des relations une présentation plate sur $\widetilde X$ de
la famille $(\widetilde{Y_x})_{x\in X}$.

La notion de présentation $\Gamma$-sympathique est stable par changement de base quelconque : si $B\simeq A\{T/r\}/(a_1,\ldots, a_m)$ est une présentation $\Gamma$-sympathique de $B$ sur $A$ alors pour toute extension complète $L$ de $k$, toute algèbre $L$-affinoïde $\Gamma$-stricte $A'$ et tout morphisme $A\to A'$,
la présentation
$B\widehat \otimes_A A'\simeq A'\{T/r\}/(a_1,\ldots, a_m)$ de $B\widehat \otimes_A A'$ sur $A'$
est $\Gamma$-sympathique.

Nous pouvons maintenant énoncer
le théorème principal de cet article
(théorème \ref{theo-main}).

\begin{enonce*}{Théorème}Soit $Y\to X$ un morphisme entre espaces $k$-affinoïdes $\Gamma$-stricts, 
supposé plat et à fibres géométriquement réduites. Il existe une famille finie $(X_i\to X)$
de morphismes dont les images recouvrent $X$ et telle que pour tout $i$, 
les conditions suivantes soient satisfaites, en posant $Y_i=Y\times_X X_i$ : 
\begin{itemize}[label=$\diamond$]
\item l'espace $X_i$ est affinoïde et $\Gamma$-strict ; 
\item si $\abs{k\gpm}\neq \{1\}$ le morphisme $X_i\to X$ est quasi-étale  ; 
\item si $\abs{k\gpm}=\{1\}$ le morphisme $X_i\to X$ possède une factorisation $X_i\to X'_i\to X$
où  $X'_i$ est affinoïde et $\Gamma$-strict, où $X_i\to X'_i$ est fini, radiciel et plat et où $X'_i\to X$ est quasi-étale ;

\item l'espace $Y_i$ possède une présentation $\Gamma$-sympathique sur $X_i$.
\end{itemize}
\end{enonce*}

\subsection*{Liens avec le théorème de la fibre réduite}
Le théorème ci-dessus était déjà connu lorsque $\Gamma=\{1\}$ (ce qui entraîne que $\abs {k\gpm}\neq \{1\}$) et lorsque la dimension relative de $Y\to X$ est constante : c'est alors
en effet une conséquence du \emph{théorème de la fibre réduite}
de Bosch, Lütkebohmert et Raynaud (\cite{frg4}, théorème 2.1) qui assure sous ces hypothèses l'existence, après un changement de base quasi-étale, d'un modèle \emph{formel} affine
de $Y\to X$, plat et à fibres géométriquement réduites ; et un tel modèle fournit une présentation $\{1\}$-sympathique de $Y\to X$ (du moins, une présentation satisfaisant les conditions (1) à (4) ci-dessus ; mais il n'est pas très difficile d'imposer ensuite (5) et (6) quitte à procéder à un changement de base quasi-étale supplémentaire, provenant d'un morphisme étale au niveau de la fibre spéciale). 

Notre énoncé est donc plus faible que le théorème de la fibre réduite lorsque celui-ci s'applique, mais il vaut sans
hypothèse d'équidimensionalité et pour un
groupe $\Gamma$ arbitraire.

Précisons que nous n'utilisons pas dans notre preuve le théorème de la fibre réduite, et que nous le \emph{retrouvons}
sous certaines hypothèses additionnelles. En effet si la valeur absolue de $k$ n'est pas triviale, si $Y\to X$ est un morphisme 
plat et à fibres géométriquement réduites
entre espaces strictement affinoïdes, et \emph{si $X$ admet un modèle formel sur $k^\circ$ à fibre spéciale réduite} (c'est automatiquement le cas lorsque $X$ est réduit et $k$ algébriquement clos), il n'est pas difficile de construire à partir d'une présentation $\{1\}$-sympathique de $Y$ sur $X$ un modèle formel plat et à fibres géométriquement réduites du morphisme $Y\to X$ (proposition \ref{prop-retrouve-fibred}). 

Bien entendu, il est naturel de se demander si l'on peut
espérer une version $\Gamma$-stricte du théorème de la fibre réduite
pour $\Gamma$ quelconque. C'est vraisemblable, mais cela requerrait 
en
premier lieu de définir une variante $\Gamma$-graduée ou $\Gamma$-filtrée de la théorie des schémas formels, ne serait-ce que pour pouvoir formuler un énoncé. 

\subsection*{Motivations}
Pour tout espace affinoïde $\Gamma$-strict $X$,
l'application anti-continue $X\to \widetilde X$ induit une bijection $\pi_0(X)\simeq \pi_0(\widetilde X)$.
Notre théorème peut donc être utilisé pour
contrôler la variation des composantes connexes géométriques des fibres
d'un morphisme plat et à fibres géométriquement réduites, ce dont nous
avons besoin de manière cruciale
dans notre travail \cite{ducrosYYY}
sur l'aplatissement par éclatements en géométrie de Berkovich. 
C'était
notre principale motivation pour ce présent article
et c'est la raison pour laquelle
nous avons inclus les conditions (5) et (6) dans la définition d'une présentation
sympathique. 

Mentionnons que
cette stratégie détournée pour
comprendre la variation des composantes connexes en famille en géométrie analytique n'est pas nouvelle : elle a par exemple été suivie par Abbes et Saito dans
\cite{abbes-s2002} ou par Poineau dans
\cite{poineau2008} (ces auteurs travaillent dans le cadre strictement analytique
en se fondant sur le théorème de la fibre réduite).

\subsection*{À propos de notre démonstration} 
L'ingrédient majeur de ce travail est
le théorème de finitude de Grauert et Remmert ; il est 
établi dans \cite{grauert-r1966}, mais on en trouve des variantes un peu renforcées dans la littérature postérieure, comme par exemple
le théorème 1 de  \cite{bosch-g-r1984} 6.2.4, ou le théorème 1.3 de \cite{frg4} -- on peut voir
ce dernier comme
la version \emph{absolue}, c'est-à-dire au-dessus de $\mathrm{Spf}\;k^\circ$, du théorème de la fibre réduite de Bosch, Lütkebohmert et Raynaud. Ce théorème
de Grauert et Remmert ne porte que sur les algèbres strictement $k$-affinoïdes, et nous devons l'étendre au cas $\Gamma$-strict pour pouvoir nous en servir. C'est l'objet de la section \ref{s-grg}. On y démontre 
un analogue 
du théorème  1 de \cite{bosch-g-r1984} 6.2.4
(théorème \ref{theo-reduit-distingue}), dont on déduit un
analogue du théorème 1.3 de \cite{frg4}
(théorème \ref{theo-affinoide-ponctuel})
qui peut s'énoncer comme suit :
\emph{si $A$ est une algèbre $k$-affinoïde $\Gamma$-stricte
et géométriquement réduite,
il existe une extension finie $L$ de $k$, qu'on peut
prendre séparable si $\abs{k\gpm}\neq \{1\}$, telle que $A_L$
possède une présentation $\Gamma$-sympathique sur $L$}. 
Le théorème \ref{theo-reduit-distingue}
se démontre
par réduction au cas strict
\emph{via} une extension des scalaires 
de $k$ à $\hr {\eta_r}$ pour une famille
finie convenable $r=(r_1,\ldots,r_n)$ d'éléments de $\Gamma$, où $\eta_r$ est le point de $\A^{n,\mathrm{an}}_k$
donné par la semi-norme $\sum a_IT^I \mapsto
\max \abs{a_i}\cdot r^I$.
(La descente de $\hr{\eta_r}$ à $k$
fonctionne parce que le $k$-espace de Banach $\hr{\eta_r}$
est «somme directe orthogonale» de droites ;
c'est évident si les $r_i$ sont multiplicativement
indépendants modulo $\abs{k\gpm}$, mais c'est
vrai en toute généralité, \emph{cf.} le lemme \ref{lem-kr-schauder}.)

Nous avons également besoin du «théorème de la fibre réduite sur un annéloïde de valuation» (corollaire \ref{coro-fibre-reduite}),
mais c'est en fait encore une conséquence du théorème de Grauert et Remmert. En effet, nous le prouvons en nous réduisant au cas non gradué, dans lequel il a été démontré par Temkin (\cite{temkin2010}, étapes 2, 3 et 4 de la preuve du théorème 3.5.5)  selon le schéma de preuve  suivant : 
réduction au cas de hauteur finie par approximation, puis de hauteur 1 par dévissage,  
et  procédé de complétion et décomplétion
pour se ramener au théorème de Grauert-Remmert. 

Disons maintenant quelques mots de la façon dont nous utilisons ces résultats pour démontrer notre théorème principal, en supposant (pour alléger un peu l'exposition) que $\Gamma=\R_{>0}$ et que la valeur absolue de $k$ n'est pas triviale ; nous écrirons «sympathique» au lieu de «$\R_{>0}$-sympathique». Notre preuve fait appel de manière cruciale à la théorie de la réduction (graduée) des germes d'espaces analytiques, développée dans \cite{temkin2004} ; c'est un substitut à la géométrie formelle, plus souple d'utilisation, de nature plus locale, et qu'on peut employer sur des espaces non stricts.

Soit donc un morphisme $Y\to X$ entre espaces $k$-affinoïdes, supposé plat et à fibres géométriquement réduites. Soient $A$ et $B$ les algèbres affinoïdes respectivement
associées à $X$ et $Y$. On fixe un point $x$ de $X$, et on note $C$ l'algèbre $B\hat \otimes_A\hr x$. On fixe enfin une
valuation $\abs \cdot$
appartenant à la réduction $\widetilde{(X,x)}$ du germe $(X,x)$
(c'est un ouvert quasi-compact et non vide de l'espace
de Zariski-Riemann gradué $\P_{\hrt x/\widetilde k}$ de $\hrt x$ sur $\widetilde k$, égal à $\P_{\hrt x/\widetilde k}$
tout entier si et seulement si $x\notin \partial X$). On
note $\hrt x^\circ$ l'annéloïde de $\abs \cdot$. 

Par compacité de $X$ et par quasi-compacité de $\widetilde{(X,x)}$, il suffit d'exhiber un espace affinoïde $V$ et un morphisme $V\to X$ quasi-étale possédant les trois propriétés suivantes : 
\begin{enumerate}[a]
\item l'image $W$ de $V\to X$ (qui est un domaine analytique compact de $X$) contient $x$ ; 
\item $\widetilde{(W,x)}$ contient $\abs \cdot$ ; 
\item $Y\times_X V\to V$ admet une présentation
sympathique.
\end{enumerate}
Dans ce qui suit, l'expression «restreindre $X$» signifiera «remplacer $X$ par $V$ pour un certain espace affinoïde $V$ muni d'un morphisme quasi-étale $V\to X$ vérifiant les conditions (a) et (b) ci-dessus».  

La fibre $Y_x$ étant géométriquement réduite, elle
est justiciable de notre
variante $\Gamma$-stricet du théorème de Grauert et Remmert
(théorème \ref{theo-affinoide-ponctuel}) ; elle affirme
l'existence
d'une extension finie séparable $E$ de $\hr x$ telle que $Y_{x,E}$
admette une présentation sympathique
sur $E$ ;  on choisit un prolongement arbitraire de $\abs \cdot $ à $\widetilde E$,
encore noté $\abs \cdot$.
Il existe un morphisme fini étale de germes $(X',x')\to (X,x)$ tel que $\hr {x'}=E$ ; on peut donc
restreindre $X$ de sorte que $Y_x$
admette une présentation sympathique
sur
$\hr x$. 
Soit $\mathsf B$ l'image de $\widetilde B$
dans $\widetilde C$. L'annéloïde $\widetilde C$ est
entier sur $\hrt x^\circ\cdot \mathsf B$ ; 
il existe donc une sous-$\hrt x^\circ$-algèbre
$R$ de $\widetilde C$, plate (car sans torsion)
et de présentation finie, contenant $\mathsf B$ et entière sur $\hrt x^\circ\cdot\mathsf B$, telle que $R_{\hrt x}=\widetilde C.$
Remarquons que si $C$ et $\widetilde C$ ne dépendent que de la fibre $Y_x$, ce n'est pas le cas de l'annéloïde $R$, qui contient des informations 
sur le comportement de
l'espace total $Y$ au voisinage de $Y_x$ dans la «direction» codée par la valuation $\abs \cdot \in \widetilde{(X,x)}$. 

L'étape suivante consiste à appliquer le théorème de la fibre réduite sur un annéloïde de valuation au morphisme 
$\spec R\to \spec \hrt x^\circ$, ce qui est licite car sa fibre générique $\spec \widetilde C$ est géométriquement réduite. Il affirme l'existence d'une extension valuée finie $\Lambda$ de $\hrt x$ et d'un sous-annéloïde $R'$ de $\widetilde \Lambda$, fini sur $R_\Lambda$, tel que les fibres du morphisme naturel $q\colon \spec R'\to \spec \Lambda^\circ$ soient géométriquement réduites ;  quitte à agrandir $\Lambda$, on peut supposer que 
les composantes irréductibles des fibres de $q$ sont géométriquement irréductibles. Par ailleurs le caractère géométriquement réduit
des fibres de $q$ entraîne l'existence d'un recouvrement
ouvert fini $(\Upsilon_j)$ de $\spec R'$ tel que pour tout
$j$ et tout $\xi \in \spec \Lambda^\circ$, l'intersection $\Upsilon_j\cap q\inv(\xi)$ soit ou bien vide ou bien
une composante connexe de $q\inv(\xi)$ (proposition \ref{prop-schema-decoupage}). 
En réalisant $\Lambda$ comme le corpoïde résiduel d'une extension finie séparable de $\hr x$ et en raisonnant comme plus haut, on voit qu'on peut restreindre $X$ de sorte que $\Lambda=\hrt x$, puis supposer que $R'=R$ (en remplaçant $R$ par $R'$). 

En partant d'une présentation sympathique arbitraire de $Y_x$ sur $\hr x$ et d'une présentation de $R$ sur $\hrt x^\circ$ on construit alors
par différentes méthodes (relèvements, renormalisation, concaténation, petites perturbations\ldots) une présentation sympathique de $Y_x$ sur $\hr x$ qui a la vertu de s'étendre, quitte à restreindre $X$, en une présentation sympathique de $Y$ sur $X$ ; la platitude de $Y$ sur $X$ joue un rôle crucial dans cette étape.

\subsection*{Remerciements}
Je sais gré au rapporteur anonyme de sa lecture très attentive du manuscrit et de ses nombreuses remarques et suggestions, qui m'ont notamment permis de corriger une erreur importante dans la version initiale du critère \ref{ss-critere-utile} (d'autres modifications ont ensuite été nécessaires pour pouvoir utiliser la version correcte dudit critère  ; c'est ce qui a conduit  à la rédaction de l'appendice). 

\section{Algèbre graduée}\label{s-alg-grad}

\subsection{}Nous ferons un usage systématique du formalisme gradué introduit par Temkin dans \cite{temkin2004}. 
Nous avons toutefois choisi d'adopter une présentation de sa théorie qui diffère légèrement de l'originale 
et nous paraît un peu plus maniable (ce point de vue a déjà été suivi par l'auteur dans \cite{ducrosXXX}, et par Marc Chapuis dans
\cite{chapuis2017}).

Plus précisément, donnons-nous un groupe abélien divisible $D$ noté multiplicativement (en pratique, $D$ sera le plus souvent contenu
dans
$\R_{>0}$). Pour Temkin, un anneau $D$-gradué est un anneau $A$ muni d'une décomposition en \emph{somme directe} $A=\bigoplus_{d\in D} A^d$ avec 
$A^{d}\cdot A^{d'}\subset A^{dd'}$. Or l'expérience montre que lorsqu'on utilise ces anneaux gradués en géométrie analytique il n'arrive \emph{jamais}
qu'on ait besoin d'additionner deux éléments homogènes de degrés différents. Il nous a donc semblé plus commode de considérer des décompositions
en \emph{unions disjointes} ; l'avantage de ce choix est qu'il n'y a plus besoin de spécifier en permanence que les éléments manipulés sont homogènes, puisqu'ils
le sont automatiquement ; ceci permet d'alléger substantiellement la rédaction sans changer quoi que ce soit sur le fond. 

Nous appellerons donc \emph{$D$-annéloïde}
un ensemble $A$ muni des données suivantes : 
\begin{itemize}[label=$\diamond$]
\item une décomposition $A=\coprod_{d\in D} A^d$ ; 
\item sur chaque $A^d$, une loi de composition interne notée additivement qui fait de $A^d$ un groupe abélien, de neutre $0^d$ ; 
\item une loi de composition interne sur $A$ associative et commutative, notée multiplicativement, qui possède un élément neutre $1\in A^1$
et induit pour tout $(d,d')$ une application bi-additive $A^d\times A^{d'}\to A^{dd'}$.
\end{itemize}

Un élément $a$ d'un $D$-annéloïde $A$ appartient à un unique sous-ensemble $A^d$ ; on dit que $d$ est le \emph{degré}
de $a$ et on le note $\mathfrak d(a)$ ; si $A'$ est une partie de $A$, nous noterons $\mathfrak d(A')$ l'ensemble des 
$\mathfrak d(a)$ pour $a$ parcourant $A'$. 

On emploiera le plus souvent la notation $0$ au lieu de $0^d$ si le degré en jeu est clairement indiqué par le contexte; on pourra ainsi par exemple écrire
«$a=0$» au lieu de «$a=0^{\mathfrak d(a)}$», et on notera $A_{\neq 0}$ l'ensemble des éléments non nuls de $A$, c'est-à-dire
la réunion disjointe des $A^d\setminus \{0^d\}$. L'ensemble $A_{\neq 0}$ est vide si et seulement si $1=0$ (c'est-à-dire $1=0^1$ !) ; on dit alors que $A$ est nul. 

Les $D$-annéloïdes forment une catégorie ; les morphismes sont les applications qui préservent les degrés, commutent aux deux opérations et envoient 1 sur 1. 
La catégorie des $\{1\}$-annéloïdes est celle des anneaux commutatifs unitaires. 

Si $E$ est un sous-ensemble de $D$ et si $A$ est un annéloïde, on notera $A^E$ la réunion des $A^d$ pour $d\in E$. Supposons de plus que
$E$ soit un sous-groupe de $D$. Pour tout annéloïde $A$, le 
sous-ensemble $A^E$ hérite alors par restriction d'une structure de $E$-annéloïde. On dispose ainsi d'un foncteur de la catégorie des $D$-annéloïdes vers
celle des $E$-annéloïdes. Ce foncteur possède un adjoint à gauche, le «prolongement par $0$» qui envoie un $E$-annéloïde $B$ sur le $D$-annéloïde $A$ tel que $A^d=B^d$ si $d\in E$
et $A^d=\{0^d\}$ sinon. Le prolongement par zéro induit une équivalence entre la catégorie des $E$-annéloïdes et celle des $D$-annéloïdes dont le degré de tout élément
non nul appartient à $E$ ; comme la plupart des propriétés que nous considérerons seront invariantes par cette équivalence de catégories, il arrivera souvent
qu'on identifie subrepticement un $E$-annéloïde au $D$-annéloïde obtenu en le prolongeant par zéro.

Un \emph{$D$-corpoïde}
est un $D$-annéloïde non nul dans lequel tout élément non nul est inversible. 

Si $A$ est un $D$-annéloïde, un $A$-module est un ensemble $M$ muni d'une décomposition $M=\coprod_{d\in D} M^d$, d'une structure de groupe abélien
sur chacun des $M^d$, et d'une multiplication externe $A\times M\to M$ qui induit pour chaque couple $(d,d')$ une application bi-additive $A^d\times M^{d'}\to M^{dd'}$
et vérifie les égalités 
\[1\cdot m=m\;\;\text{et}\;\;a(bm)=(ab)m\]pour tout
$(a,b,m)\in A^2\times M$. (Lorsque $A$ est un $D$-corpoïde, on parlera plutôt de $A$-espace vectoriel.)
Une $A$-algèbre est un $D$-annéloïde $B$ muni d'un morphisme  $A\to B$ ; c'est en particulier un $A$-module. Si $F$ est un $D$-corpoïde, une
extension de $F$ est une $F$-algèbre qui est elle-même un $D$-corpoïde. 

À partir de maintenant et dans toute la suite de cette section nous dirons simplement «annéloïde» et «corpoïde» -- la graduation est donc indexée par $D$
sauf mention expresse du contraire.

\subsection{}
La plupart des notions usuelles d'algèbre commutative admettent un avatar gradué ayant en général
la même définition \emph{mutatis mutandis} -- on vient d'en voir quelques exemples.
On peut ainsi définir ce que sont un idéal d'un annéloïde (puis un idéal premier ou maximal), un annéloïde intègre, un annéloïde réduit, un annéloïde local, la localisation d'un annéloïde 
par une partie multiplicative, le corpoïde des fractions d'un annéloïde intègre, le spectre d'un annéloïde, un élément entier sur un annéloïde (ou algébrique sur un corpoïde),
le produit tensoriel de deux modules sur un annéloïde (puis un module plat), etc. Et une bonne partie des énoncés classiques se retranscrivent
à peu près tels quels dans ce contexte, avec les «mêmes» démonstrations : on dispose ainsi d'une théorie de la dimension des espaces vectoriels et du degré de transcendance des extensions
de corpoïdes, du lemme de Nakayama, du lemme de \emph{going-down}, d'une théorie des valuations (nous donnons quelques précisions sur cette dernière au
paragraphe \ref{ss-valuation} plus bas)
\ldots Nous utiliserons tout ceci librement, et renvoyons le lecteur à différents textes pour les détails et (certaines) preuves : la section 1 de \cite{temkin2004}, 
la section 1 de \cite{ducros2013b} et la section 2.2 de \cite{ducrosXXX} (dans ces deux derniers textes, nous présentons notamment la théorie de Galois graduée), 
ou encore l'appendice A de \cite{ducros2018}. Le lecteur pourra aussi se reporter à la section 2 de \cite{poineau2013} pour une version graduée du \emph{Nullstellensatz} qui, une fois n'est pas coutume, n'est pas le décalque de sa version classique (ledit décalque est grossièrement faux) ; voir aussi à ce sujet \cite{ducrosXXX}, 2.2.24 et 2.2.29.2. 

Si $A$ est un annéloïde , si $M$ est un $A$-module et si $B$ est une $A$-algèbre,
le $B$-module  $B\otimes_AM$ sera souvent notée $M_B$.

Le spectre d'un annéloïde $A$ possède un faisceau d'annéloïdes naturel à fibres locales ; en recollant de tels objets, on obtient ce qu'on appelle un \emph{schémoïde}. Si $X$ est un schémoïde, le corpoïde résiduel d'un point $x$ de $X$ sera noté $\kappa(x)$, et l'application $\mathscr O_{X,x}\to \kappa(x)$ sera notée $a\mapsto a(x)$. Si $Y\to X$ est un morphisme de schémoïdes, sa fibre en un point $x$ de $X$ sera notée $Y_x$ ; c'est un $\kappa(x)$-schémoïde.

\subsection{}\label{ss-notation-tgamma}
Soit $A$ un annéloïde. Si $r=(r_i)_{i\in I}$ est une famille d'éléments de $D$ et si $T=(T_i)_{i\in I}$ est une famille d'indéterminées, 
nous noterons $A[r\backslash T]$ l'annéloïde dont le sommande de degré $d$ est constitué des sommes formelles finies
du type $\sum a_J T^J$ où $J \in A^{dr^{J}}$ pour tout $J$. Chaque $T_i$ est donc de degré $r_i$
(et $(A[r\backslash T], (T_i)_i)$ représente le foncteur qui envoie une $A$-algèbre $B$ sur $\prod_i B^{r_i}$).

Si $F$ est un corpoïde et si $r$ est comme ci-dessus, $F[r\backslash T]$ est intègre et son corpoïde des fractions
sera noté $F(r\backslash T)$ ; lorsque $r$ est $F$-libre, c'est-à-dire lorsque les $r_i$
forment une famille libre du $\Q$-espace vectoriel $D/(\mathfrak d(F\gpm)^\Q)$, le corpoïde
$F(r\backslash T)$ est simplement $F[r\backslash T, r\inv\backslash S]/(S_iT_i-1)_i$. 

Mentionnons que ce sont ces annéloïdes de polynômes avec degrés des indéterminées prescrits qui sont
utilisés pour définir les notions d'éléments entiers ou algébriques : par exemple si $B$ est une $A$-algèbre, 
un élément $b$ de $B$ est dit entier sur $A$ s'il annule un polynôme unitaire appartenant à $A[\mathfrak d(b)\backslash T]$.

\subsection{}\label{ss-methode-schemas}
Soit $A$ un annéloïde et soit $B$ une $A$-algèbre
telle que
\[\mathfrak d(B_{\neq 0})=\mathfrak d(A_{\neq 0})= \mathfrak d(A\gpm).\]
Dans ce cas pour tout élément $r$ appartenant à $\mathfrak d(B_{\neq 0})$
la flèche naturelle de
$A^r\otimes_{A^1}B^1$ vers $B^r$ est un isomorphisme (et $A^r$ est libre de rang $1$
comme $A^1$-module) si bien que $B\simeq A\otimes_{A^1}B^1$.

Cette remarque peut permettre de démontrer une propriété
en se ramenant au cas non gradué, grâce à un premier changement de base de la forme $A\to A[r\backslash T, r\inv \backslash T\inv]$ ou $A\to A(r\backslash T)$ si $A$ est un corpoïde (pour une famille convenable $r$ d'éléments de $D$). Bien entendu, il faut s'assurer au préalable que 
la propriété concernée satisfait la descente relative aux  changements de base de ce type, ce qui est souvent facile à établir par un calcul explicite. Donnons deux exemples. 

\subsubsection{}
Si $M$ est un $A$-module, il est fini si et seulement si $M\otimes_A
A[r\backslash T, r\inv \backslash T\inv]$ est fini sur 
$A[r\backslash T, r\inv \backslash T\inv]$ (considérer les termes constants d'une famille finie de polynômes à coefficients dans $M$ engendrant 
 $M\otimes_AA[r\backslash T, r\inv \backslash T\inv]$). 
 
 \subsubsection{}
 Si $X$ est un $A$-schémoïde et si $\pi$ désigne la projection $X_{A[r\backslash T, r\inv\backslash T\inv]}\to X$
 une partie $U$ de $X$ est ouverte si et seulement si $\pi\inv(U)$ est ouverte (remarquer que la section
 $\sigma$ de 
$\pi$ qui envoie un point $\xi$ sur le point générique de $\spec \kappa(\xi)[r\backslash T,r\inv \backslash T\inv]$
est continue).

\subsection{Quelques propriétés géométriques dans le cadre gradué}
Nous allons utiliser la méthode générale évoquée au \ref{ss-methode-schemas}
pour étudier le comportement de certaines propriétés des algèbres sur un corpoïde par extension des scalaires, en nous ramenant
à ce qui est connu en algèbre commutative classique. 

\subsubsection{Algèbres réduites sur un corpoïde parfait}
\label{sss-red-parfait}
Soit $F$ un corpoïde parfait et soit $A$ une $F$-algèbre réduite. 
Soit $L$ une extension de $F$ ; la $L$-algèbre $A_L$ est encore réduite. Pour le voir, on
choisit une famille $r$ d'éléments de $D$ telle que
\[\mathfrak d(L(r\backslash T)\gpm)
=\mathfrak d(F(r\backslash T)\gpm)\supset \mathfrak d(A_{\neq 0}).\]
Soit $p$ l'exposant caractéristique de $F$. 
Posons
\[K=\lim_{\rightarrow}F(r^{1/p^n}\backslash T^{1/p^n})\;\;\text{et}\;\;
\Lambda=\lim_{\rightarrow}L(r^{1/p^n}\backslash T^{1/p^n}).\]

On a alors 
$\mathfrak d(A_{K\neq 0})\subset \mathfrak d(K\gpm)
=\mathfrak d(\Lambda\gpm)
\supset\mathfrak d(A_{\Lambda\neq 0})$
(les deux relations d'inclusion sont des égalités dès que $A$
est non nulle), 
si bien que
\[A_K=(A_K)^1\otimes_{K^1}K,\;\;
(A_\Lambda)^1=(A_K)^1\otimes_{K^1}\Lambda^1\;\text{et}\;
A_\Lambda=(A_\Lambda)^1\otimes_{\Lambda^1}\Lambda.\]
Pour montrer que $A_L$ est réduite, il
suffit de montrer que $A_\Lambda$ est réduite, 
et même que $(A_{\Lambda})^1$ est réduite (en effet, si $a$ est un élément non nul de
$A_\Lambda$, son degré $\mathfrak d(a)$ appartient à $\mathfrak d(\Lambda^\times)$, 
si bien qu'il existe $\lambda \in \Lambda^\times$ tel que $\lambda a$ soit de degré $1$, 
et $a$ est alors nilpotent si et seulement si $\lambda a$ est nilpotent). 
Mais comme $A$ est réduite, $A_K$ est réduite par la forme même de $K$ ; 
en particulier, $(A_K)^1$ est réduite. Par ailleurs, puisque $F$ est parfait, $K$ est parfait, et $K^1$ est \emph{a fortiori}
parfait. Il s'ensuit que $(A_{\Lambda})^1=(A_K)^1\otimes_{K^1}\Lambda^1$ est réduite. 

\subsubsection{Algèbres géométriquement réduites}
Soit $F$ un corpoïde et soit $A$ une  $F$-algèbre. 
Soit $L$ une extension parfaite de $F$. Il résulte de \ref{sss-red-parfait}
que $A_L$ est réduite si et seulement si $A_\Lambda$ est réduite pour toute extension $\Lambda$ de $F$ ; si ces conditions équivalentes sont satisfaites nous dirons que $A$ est géométriquement réduite.

Cette notion donne lieu par recollement à la notion de schémoïde géométriquement réduit sur $F$, qui se teste sur une extension parfaite de $F$.

\subsubsection{Algèbres intègres sur un corpoïde algébriquement clos}
Soit $F$ un corpoïde algébriquement clos et soit $A$ une $F$-algèbre
intègre. Soit $L$ une extension de $F$ ; la $L$-algèbre $A_L$ est encore intègre. 
Pour le voir, on peut supposer par un argument de limite inductive que $A$ et $L$
sont de type fini sur $F$ (respectivement comme algèbre et comme extension). Dans ce cas
le sous-groupe de $D/\mathfrak d(F\gpm)$ engendré par $\mathfrak d(A_{\neq 0})$ et $\mathfrak d(L\gpm)$ est 
de type fini, et partant libre de rang fini puisque $\mathfrak d(F\gpm)$ est divisible. 
Il existe alors une famille finie et $F$-libre $r$ d'éléments de $D$ telle que
 \[\mathfrak d(L(r\backslash T)\gpm)
=\mathfrak d(F(r\backslash T)\gpm)\supset \mathfrak d(A_{\neq 0}).\]
Posons $K=F(r\backslash T)$ et $\Lambda=L(r\backslash T)$. On  a
 \[A_K=(A_K)^1\otimes_{K^1}K,\;\;
(A_\Lambda)^1=(A_K)^1\otimes_{K^1}\Lambda^1\;\text{et}\;
A_\Lambda=(A_\Lambda)^1\otimes_{\Lambda^1}\Lambda.\]
Pour montrer que $A_L$ est intègre il
suffit de montrer que $A_\Lambda$ est intègre, 
et même que $(A_{\Lambda})^1$ est intègre (par un argument analogue à celui utilisé plus haut pour le caractère réduit). 

La fermeture séparable de $K$ dans 
$A_K$ est triviale. En effet, soit $M$ une extension finie séparable de $K$.
Le groupe $\mathfrak d(M\gpm)$ est de rang rationnel fini sur $\mathfrak d(F\gpm)$ ; comme ce dernier est divisible, $\mathfrak  d(M\gpm)/\mathfrak d(F\gpm)$ est libre de rang fini ; soit $s=(s_1,\ldots, s_n)$ une base de ce quotient. En choisissant pour tout $i$ un élément de $M\gpm$ de degré $s_i$ on obtient un plongement 
de $F(s\backslash T)$ dans $M$ ; on a alors
$\mathfrak d(F(s\backslash T)\gpm)=\mathfrak d(M\gpm)$ et $M^1=F^1=F(s\backslash T)^1$ (la première égalité provenant du fait que $F^1$
est algébriquement clos). Par conséquent
$M=F(s\backslash T)$ et un calcul direct montre que $A_M$ est intègre ; par conséquent, si $M$ n'est pas triviale elle n'admet aucun $K$-plongement dans $A_K$. 

Dès lors la fermeture séparable de $K^1$ dans $(A_K)^1$ est triviale et $(A_{\Lambda})^1=A^1\otimes_{F^1}\Lambda^1$ est intègre. 

\subsubsection{Algèbres géométriquement intègres}\label{sss-alg-geomintegre}
Soit $F$ un corpoïde et soit $A$ une  $F$-algèbre. 
Soit $L$ une extension algébriquement close de
$F$. Il résulte de \ref{sss-red-parfait}
que $A_L$ est intègre si et seulement si $A_\Lambda$ est intègre pour toute extension $\Lambda$ de $F$ ; si ces conditions équivalentes sont satisfaites nous dirons que $A$ est géométriquement intègre.

\subsubsection{}
Un schémoïde $X$ sur un corpoïde $F$ est dit géométriquement irréductible si $X_L$ est irréductible pour toute extension $L$ de $F$. 

Supposons qu'il existe une extension $\Lambda$ séparablement close de $F$ tel que $X_\Lambda$ soit irréductible.
Soit $\Lambda'$ une clôture algébrique de  $\Lambda$. Le schémoïde $Y:=(X_{\Lambda'})_{\mathrm{red}}$
est homéomorphe à $X_\Lambda$, et est par conséquent irréductible; et il est réduit par définition.
Il résulte alors de \ref{sss-alg-geomintegre}
que $Y_M$ est irréductible pour toute extension $M$ de $\Lambda'$ ; ceci entraîne que $X_M$ est irréductible pour toute telle $M$
(il est en effet homéomorphe à $Y_M$), puis que $X$ est géométriquement irréductible. 

\subsection{Dimension}
Soit $F$ un corpoïde et soit $A$ une $F$-algèbre de type fini. Soit $L$ une extension de $F$ telle
que $\mathfrak d(A_{L \neq 0})=\mathfrak d(L\gpm)$. La dimension de Krull de $A_L^1$ ne dépend alors pas de $L$, 
et est appelée \emph{$F$-dimension}
de $A$, ou dimension de $A$ s'il n'y a pas d'ambiguïté (mais on prendra garde que cette notion dépend du corpoïde de base $F$ : si $\rho$ est un élément de $D$
qui n'est pas de torsion modulo $\mathfrak d(F\gpm)$ alors $F(\rho\backslash T)$ est de dimension $0$ sur
lui-même mais de dimension $1$
sur $F$). 

Si les $\mathfrak p_i$ sont les idéaux premiers minimaux de $A$ alors
\[\dim_F A=\sup_i \mathrm{deg. tr.}(\mathrm{Frac}(A/\mathfrak p_i)/F)\]
comme on le voit en se ramenant au cas non gradué par extension des scalaires
à un corpoïde $L$ comme ci-dessus de la forme $F(r\backslash T)$ avec $r$ convenable (ce type
d'extension a l'avantage de préserver l'intégrité). 

On étend la notion de $F$-dimension aux schémoïdes de type fini sur $F$ 
en posant pour un tel schémoïde $X$
\[\dim_F X=\sup_{x\in X} \mathrm{deg. tr.}(\mathrm{Frac}(\kappa(x)/F).\]
Si $x$ est un point de $X$ la dimension de $X$ en $x$ est le maximum des dimensions
des composantes irréductibles de $X$ contenant $x$. Si $f\colon Y\to X$ est un morphisme
entre $F$-schémoïdes de type fini, la dimension relative de $Y$ sur $X$ en un point $y$ de $Y$
est la dimension de $Y_{f(y)}$ en $y$. 

\subsection{}\label{ss-valuation}
Soit $A$ un annéloïde.
Une \emph{valuation}
sur $A$ est la donnée d'un groupe abélien ordonné $\Gamma$
noté multiplicativement et d'une application $\abs\cdot  \colon A\to \Gamma\cup\{0\}$
(où $0$ est un élément absorbant pour la multiplication et plus petit que tout élément de $\Gamma$) telle que:
\begin{itemize}[label=$\diamond$]
\item $\abs 1=1$ ; 
\item $ \abs {ab}=\abs a \cdot \abs b$ pour tout couple $(a,b)$ d'éléments de $A$ ; 
\item $\abs{a+b}\leq \max (\abs a, \abs b)$ pour tout couple
$a$ et $b$ d'éléments de même degré de $A$. 
\end{itemize}

Deux valuations $\abs
\cdot  \colon A\to \Gamma\cup\{0\}$ et $\abs\cdot' \colon A\to \Gamma'\cup\{0\}$ sont dites équivalentes s'il existe un groupe abélien ordonné $\Gamma''$, deux morphismes injectifs croissants $i\colon \Gamma''\to \Gamma$ et $j\colon \Gamma''\to \Gamma'$, et une valuation $\abs \cdot''
\colon
F\to \Gamma''\cup\{0\}$ telle que $\abs \cdot =i\circ \abs \cdot ''$ et $\abs \cdot' =j\circ \abs \cdot''$.

\subsection{}
Soit $F$ un corpoïde valué (c'est-à-dire muni d'une valuation). Nous emploierons par défaut
les notations suivantes : 

\begin{itemize}[label=$\diamond$]
\item $\abs \cdot$ pour la valuation de $F$ 
\item $F^\circ$ pour l'annéloïde $\{z\in F, \abs z \leq 1\}$ ; ce dernier est local et est appelé l'annéloïde
de la valuation $\abs \cdot$;
\item  $F^{\circ \circ}$ pour l'unique idéal maximal de $F^\circ$, qui n'est autre que $\{z\in F, \abs z < 1\}$.
\end{itemize}

\subsection{}
Soit $A$ un annéloïde. Nous noterons $\spv (A)$ le \emph{spectre valuatif}
de $A$, c'est-à-dire l'ensemble des classes d'équivalence de valuations sur $A$.
On peut définir de manière équivalente $\spv (A)$ comme l'ensemble des couples $(x,\abs \cdot)$
où $x$ est un point de $\spec A$ et $\abs \cdot$ une classe d'équivalence de valuations
sur $\kappa(x)$.
Si $\xi$ est un point de $\spv (A)$ correspondant à un couple $(x,\abs \cdot)$ le corpoïde valué
$(\kappa(x),\abs \cdot)$ sera noté $\kappa(\xi)$ et sa valuation structurale restera notée 
$\abs \cdot$ ; lorsqu'on voudra préciser que le morphisme d'évaluation 
$a\mapsto a(x)$ est vu comme à valeurs dans le corpoïde \emph{valué} 
$\kappa(\xi)$ on l'écrira $a\mapsto a(\xi)$. Grâce à cette convention on a $\xi(a)=\abs{a(\xi)}$ pour tout $a\in A$
et c'est la seconde écriture dont nous nous servirons la plupart du temps.

Si $E$ est un sous-ensemble de $A$ nous noterons $\spv(A,E)$ le sous-ensemble 
de $\spv(A)$ formé des points $\xi$ tels que $\abs {a(\xi)}\leq 1$ pour tout $a\in E$.

\subsection{}
Soit $F$ un corpoïde. Deux valuations sur
$F$ sont équivalentes si et seulement si elles ont même annéloïde ; un sous-annéloïde $A$ de $F$
est l'annéloïde d'une valuation
de $F$
si et seulement si il est local et maximal parmi les sous-annéloïdes locaux de $F$ pour la relation
 de domination. Ceci équivaut à demander que pour tout $x\in F$ on ait $x\in A$ ou $x\inv \in A$. 
 
On dispose d'une relation d'ordre
sur l'ensemble des valuations de $F$, qui se définit
\emph{mutatis mutandis}
comme dans la théorie classique ; rappelons
brièvement en quoi elle consiste. 

Donnons-nous donc deux valuations $\abs \cdot_1$ et $\abs \cdot_2$ sur $F$
dont on note $A$ et $B$
les annéloïdes respectifs, et $\kappa$ et $\lambda$ les corpoïdes résiduels respectifs. Les
assertions suivantes sont alors équivalentes : 
\begin{enumerate}[i]
\item il existe un sous-groupe convexe $H$ de $\abs{F\gpm}_1$ tel que $\abs \cdot _2$ soit (équivalente à) la composée de $\abs \cdot_1$ et de l'application quotient $\abs F_1\to (\abs {F\gpm}_1/H)\cup\{0\}$ ; 
\item l'annéloïde $B$ est égal à $A_{\mathfrak p}$ pour un certain idéal premier $\mathfrak p$ de $A$ ; 
\item il existe un annéloïde de valuation $\alpha$ de corpoïde des fractions $\lambda$ tel que $A$ soit l'image réciproque
de $\alpha$ par la flèche de réduction $B\to \lambda$.
\end{enumerate}
On dit alors que $\abs \cdot_1$ est plus fine que $\abs \cdot_2$, ou qu'elle la raffine ; on dit aussi que $\abs \cdot_2$ est plus grossière que $\abs \cdot _1$. 

Indiquons quelques relations entre les différents objets évoqués dans les équivalences ci-dessus : s'il existe $H$
comme dans (i), alors (ii) est vraie en prenant pour $\mathfrak p$ l'ensemble des éléments $a$ de $A$ tels que $\abs a_1\notin H$ ; s'il existe $\mathfrak p$ comme dans (ii), 
alors (iii) est vraie en prenant pour $\alpha$ l'annéloïde $A/\mathfrak p$ et (i) est vraie en prenant pour $H$ le groupe $\abs{B\gpm}_1$ ; et s'il existe 
$\alpha$ comme dans (iii) alors (ii) est vraie en prenant pour $\mathfrak p$ l'image réciproque de l'idéal maximal de $\alpha$. 

Soit $\abs \cdot$ une valuation sur $F$ ; utilisons les notations standard $F^\circ$ et $F^{\circ \circ}$
pour l'annéloïde de $\abs \cdot$ et pour l'idéal maximal de celui-ci.
La caractérisation (iii) de la relation de raffinement fournit une bijection entre l'ensemble des valuations de $F$ plus fines que $\abs \cdot$
(à équivalence près)
et l'ensemble des valuations sur le corpoïde $F^\circ/F^{\circ \circ}$. 
Soit maintenant $\mathscr V$ l'ensemble des valuations sur $F$ plus grossières que $\abs \cdot$,
muni de la relation de raffinement. La caractérisation (i)
(resp. (ii)) de cette dernière fournit une bijection croissante (resp. décroissante)
entre $\mathscr V$ et l'ensemble des sous-groupes convexes de $\abs{F\gpm}$ (resp. des 
idéaux premiers de $F^\circ$) ; le premier étant totalement ordonné par inclusion, il en va de même du second. 
Le cardinal 
de $\mathscr V\setminus\{\abs \cdot\}$ est appelé la \emph{hauteur}
de $\abs \cdot$.

\subsection{Valuations de Gauß}\label{ss-valgauss}
Soit $F$ un corpoïde valué, soit $r=(r_i)_{i\in I}$ une famille d'éléments de $D$, et soit $\gamma=
(\gamma_i)_{i\in I}$
une famille d'éléments d'un groupe ordonné contenant $\abs {F\gpm}$.
On note $F(r\backslash T/\gamma)$ le corpoïde 
$F(r\backslash T)$ muni de la \emph{valuation de Gauß
de paramètre $\gamma$} définie par la formule $\sum a_J T^J\mapsto \max\; \abs{a_J}\cdot\gamma^J$. 
Dans le cas où $D=\{1\}$ on écrira $F(T/\gamma)$ au lieu de $F(1\backslash T/\gamma)$. 

\subsection{}\label{ss-gauss-partiedeg1}
Soient $F, r$ et $\gamma$ comme ci-dessus. 
Le corps valué $F(r\backslash T/\gamma)^1$ est alors de la forme $F^1(\Theta/\delta)$. 
En effet, par le lemme de Zorn ou  une récurrence transfinie, 
on se ramène au cas où l'ensemble $I$ possède un unique élément $i$ et on écrit $r$ et $\gamma$
au lieu de $r_i$ et $\gamma_i$. On distingue deux cas. 

\begin{itemize}[label=$\diamond$]
\item Le cas où $r$ est d'ordre fini $n$ modulo $\mathfrak d(F\gpm)$. On choisit alors un élément $\lambda$ de $F\gpm$
tel que $r^n=\mathfrak d(\lambda)$; si l'on pose $\delta=\gamma\cdot \abs \lambda \inv$ et $\Theta=T/\lambda$ alors
$F(r\backslash T/\gamma)^1$ s'identifie à $F^1(\Theta/\delta)$. 
\item Le cas où $r$ n'est pas de torsion modulo $n$. On a alors  $F(r\backslash T/\gamma)^1=F^1$. 
\end{itemize}

\begin{rema}\label{rem-transcendant-f1}
Soit $F$ un corpoïde et soit $r$ une famille d'éléments de $D$. Le corps $F(r\backslash T)^1$ est alors transcendant pur sur $F^1$ : il suffit en effet d'appliquer \ref{ss-gauss-partiedeg1} en munissant $F$ de la valuation triviale et en prenant $\gamma=1$ ou, de manière plus raisonnable, d'en reprendre la preuve sans se préoccuper des valuations. 
\end{rema}

\subsection{Corpoïdes résiduels}
Soit $F$ un corpoïde muni d'une valuation $\abs \cdot$. Le \emph{corpoïde résiduel} de $F$ est le $(D\times\abs{F\gpm})$-corpoïde
\[\widetilde F:=\coprod_{(d,\gamma)\in D\times \abs{F\gpm}}
\{x\in F^d, \abs x \leq \gamma\}/\{x\in F^d, \abs x <\gamma\}.\]
Notons que $\widetilde F^{D\times \{1\}}$ est le corpoïde résiduel $F^\circ/F^{\circ \circ}$
de l'annéloïde local $F^\circ$. Si $x$ est un élément non nul de $F$ on notera
$\widetilde x$ son image dans $\widetilde F^{(\mathfrak d(x),\abs x)}$. 

Soit $L$ une extension valuée de $F$ et soit $(\ell_i)$ une famille d'éléments non nuls de $L$ ; pour tout $i$ posons $r_i=\mathfrak d(\ell_i)$ et $\gamma_i=\abs {\ell_i}$. Les faits suivants résultent immédiatement des définitions : 

\begin{enumerate}[1]
\item La famille $(\widetilde {\ell_i})$ est libre sur $\widetilde F$ si et seulement si 
$\abs{\sum a_i \ell_i}=\max\;\abs{a_i}\cdot \gamma_i$ pour toute famille $(a_i)$ d'éléments presque tous nuls de $F$ telle que les $a_i\ell_i$ soient tous de même degré. Si c'est le cas, la famille $(\ell_i)$ est alors libre sur $F$. 

\item La famille $(\widetilde {\ell_i})$ est
algébriquement indépendante sur $\widetilde F$ si et seulement si 
il existe un $F$-isomorphisme de corpoïdes valués $F(r_i\backslash T_i/\gamma_i)_i\simeq L$ envoyant $T_i$ sur $\ell_i$ pour tout $i$. 
\end{enumerate}

Il résulte de (1) que $\dim_{\widetilde F}\widetilde L\leq \dim_F L$. Mentionnons par ailleurs que 
$\dim_{\widetilde F}\widetilde L=ef$ où $e$ est l'indice de ramification de $L$ sur $F$, c'est-à-dire
l'indice de $\abs {F\gpm}$
dans $\abs{L\gpm}$, et où $f$ est son indice d'inertie,
c'est-à-dire la dimension du $\widetilde F^{D\times\{1\}}$-espace
vectoriel $\widetilde
L^{D\times\{1\}}$ (voir par exemple \cite{ducrosXXX}, 2.2.46.3 ; précisons qu'ici les indices et dimensions
sont à considérer comme des cardinaux).

\subsection{}\label{prolonge-gauss}
Soit $F$ un corpoïde valué et soit $L$ une extension algébrique de $F$. Soit $r=(r_i)_{i\in I}$ une famille d'éléments de $D$, soit $\gamma=(\gamma_i)$
une famille d'éléments d'un groupe abélien ordonné contenant $\abs{F\gpm}^\Q$, et soit $T=(T_i)$ une famille d'indéterminées. Munissons $L$ d'un prolongement
de $\abs \cdot$, encore noté $\abs \cdot$. L'unique valuation sur $L(r\backslash T)$ qui prolonge les valuations données sur $L$ et $F(r\backslash T/\gamma)$ est
alors la valuation de Gauß de paramètre $\gamma$.

En effet, fixons une telle valuation sur $L(r\backslash T)$, et soit $\Lambda$
son corpoïde résiduel. Pour tout $i$ notons
$\widetilde {T_i}$ l'image de $T_i$ dans $\Lambda^{(r_i,\gamma_i)}$. Comme les $\widetilde T_i$ sont 
algébriquement indépendants sur $\widetilde F$ et comme $\widetilde L$
est algébrique sur $\widetilde F$, les $\widetilde T_i$ sont 
algébriquement indépendants sur $\widetilde L$, ce qui permet de conclure. 

\subsection{Prolongements d'une valuation à une extension algébrique}
Soit $F$ un corpoïde valué et soit $L$ une extension algébrique de $F$. 

\subsubsection{}\label{sss-efn-1}
Supposons que $L^1=F^1$. Pour tout $x\in L\gpm$, il existe $N>0$ tel que $\mathfrak d(x)^N\in \mathfrak d(F\gpm)$ ; il en résulte
qu'il existe $\lambda\in F\gpm$ tel que $x^N/\lambda \in L^1=F^1$ ; par conséquent $x^N\in F\gpm$. Il s'ensuit que $\abs \cdot$
possède un unique prolongement $\abs \cdot $ à $L$, dont on munit ce
dernier. Soit $(\ell_i)$ une famille d'éléments de $L\gpm$ tels que les $\mathfrak d(\ell_i)$ constituent un système de représentants de $\mathfrak d(L\gpm)/\mathfrak d(F\gpm)$. Tout élément de $L\gpm$ possède une unique écriture
de la forme $a\ell_i$ pour un certain $i$ et un certain $a\in F\gpm$ ; 
les $\ell_i$ forment donc
une base de $L$ sur $F$,
et les $\widetilde{\ell_i}$ forment une base de $\widetilde L$ sur $\widetilde F$. 

\subsubsection{}\label{sss-efn-2}
Supposons maintenant que
$\mathfrak d(L\gpm)=\mathfrak d(F\gpm)$.
On a alors 
$L=L^1\otimes_{F^1}F$ et tout élément de $L\gpm$ a une écriture de la forme
$\ell x$ avec $\ell \in L^1$ et $x\in F$ ; il en résulte
que
tout prolongement à $L^1$ de la restriction
de $\abs \cdot$ à $F^1$ s'étend d'une unique manière en un prolongement 
de $\abs \cdot$ à $L$ ; si on le  note encore $\abs \cdot$ il est donné par la formule $\abs {\ell x}=\abs \ell \cdot\abs x$ et l'on a $\widetilde L=\widetilde {L^1}\otimes_{\widetilde {F^1}}\widetilde F$. 

\subsubsection{}\label{sss-efn-recap}
Plaçons-nous maintenant
dans le cas général et posons $\Lambda=L^{\mathfrak d(F\gpm)}$.
En considérant le dévissage $F\hookrightarrow \Lambda
\hookrightarrow L$
on déduit de \ref{sss-efn-1}
et \ref{sss-efn-2} les faits suivants : 
\begin{itemize}[label=$\diamond$]
\item la restriction à $L^1$ induit une bijection entre l'ensemble des extensions de $\abs \cdot$ à $L$ et celui
des extensions à $L^1$ de la restriction de $\abs \cdot$ à $F^1$ ;
\item si l'on munit $L$ d'un prolongement donné de $\abs \cdot$ on a 
\[ [\widetilde L:\widetilde \Lambda]=[L:\Lambda]\;\;\text{et}\;\;\widetilde \Lambda=\widetilde {\Lambda^1}\otimes_{\widetilde{F^1}}\widetilde F.\]
\end{itemize}

\subsubsection{}
Supposons maintenant que $L$ est finie sur $F$. Dans ce cas $L^1$ est finie sur $F^1$, et la restriction de $\abs \cdot$ à $F^1$ admet donc un ensemble fini $\{\abs \cdot_1, \ldots, \abs \cdot_r\}$ d'extensions
à $L^1$ ; pour tout $i$, il
résulte de \ref{sss-efn-recap}
que $\abs \cdot _i$ s'étend d'une unique manière en un prolongement de $\abs \cdot$ à $L$, qu'on note encore
$\abs \cdot_i$ ; soit $L_i$ (resp. $\Lambda_i$)
le corpoïde valué $(L,\abs \cdot_i)$ (resp. $(\Lambda, \abs \cdot_i)$). Notons  $e_i$ et $f_i$ les indices d'inertie et de ramification de $L^1_i$
sur $F^1$, et $e^\sharp_i$ et $f^\sharp_i$ les indices d'inertie et de ramification de $L_i$
sur $F$. On sait que $\sum e_i f_i\leq [L^1:F^1]$. En vertu de \ref{sss-efn-recap} on a 
\begin{eqnarray*}
\sum e^\sharp_i f^\sharp_i&=&\sum_i [\widetilde {L_i}:\widetilde F]\\
&=&\sum_i [\widetilde L_i:\widetilde {\Lambda_i}]\cdot[\widetilde {\Lambda_i}:\widetilde F]\\
&=&[L:\Lambda]\sum_i [\widetilde {\Lambda_i}:\widetilde F]\\
&=&[L:\Lambda]\sum_i [\widetilde {\Lambda^1_i}:\widetilde {F^1}]\\
&=&[L:\Lambda]\sum_i e_i f_i\\
&\leq&[L:\Lambda]\cdot[L^1:F^1]\\
&=&[L:\Lambda]\cdot[\Lambda:F]\\
&=&[L:F].\end{eqnarray*}
Ainsi, $\sum e_i^\sharp f^\sharp_i\leq [L:F]$, et l'on a plus précisément
\[\frac{\sum e_i^\sharp f^\sharp_i}{[L:F]}=\frac{\sum e_i f_i}{[L^1:F^1]}.\]
Le quotient $\frac{\sum e_i^\sharp f^\sharp_i}{[L:F]}$ est appelé le \emph{défaut}
de l'extension $F\hookrightarrow L$. On dit que cette extension est \emph{sans défaut}
si son défaut vaut $1$.

\paragraph{}\label{par-defaut-deg1}
Par ce qui précède, $F\hookrightarrow L$ est sans défaut si et seulement si
$F^1\hookrightarrow L^1$ est sans défaut. 

\paragraph{}\label{par-defaut-gauss}
Soit $r$
une famille d'éléments de $D$, soit $\Gamma$ un groupe abélien ordonné contenant
$\abs{L\gpm}$ et soit $\gamma$ une famille d'éléments de $L$, 
paramétrée par le même ensemble que $r$.  Il résulte de \ref{prolonge-gauss}  que le défaut de $F(r\backslash T/\gamma)\hookrightarrow L(r\backslash T)$ est égal à celui de $L$ sur $F$.

\subsection{}\label{ss-stabilite}
Soit  $F$ un corpoïde valué. On dit que $F$ est \emph{stable}, ou que sa valuation $\abs \cdot$ est stable, si toute extension finie de $F$ est sans défaut. 
Il découle de \ref{par-defaut-deg1}
que $F$ est stable si et seulement si $F^1$ est stable.

\subsubsection{}
Si $\abs \cdot$ est la composée d'une valuation $\abs\cdot^\star$ sur $F$ et d'une valuation $\abs \cdot_\star$ alors $\abs \cdot$ est stable si et seulement si $\abs \cdot^\star$ et $\abs \cdot_\star$ le sont :
on le démontre sans difficulté comme dans le cas classique, auquel on peut également se ramener puisque la stabilité se teste en degré 1.

\subsubsection{}
On suppose que $D=\{1\}$
et que la valuation de $F$ est de hauteur $1$. Si $F$ est stable, son complété $\widehat F$ est stable
d'après  \cite{bosch-g-r1984}, 3.6.2, Prop. 3 ; précisons
que la définition de stabilité dans cet ouvrage (\emph{op. cit.}, 3.6.1, Def. 1) n'est pas \emph{a priori}
la même que la nôtre, mais les deux coïncident en fait d'après \emph{op. cit.}, 3.6.2, Prop. 5. 

\subsubsection{}\label{sss-gauss-stable1}
Supposons toujours que $D=\{1\}$ (mais on ne fait plus
d'hypothèse sur la hauteur de la valuation). Soit 
$\Gamma$ un groupe abélien ordonné contenan t $\abs {F\gpm}$ et soit $\gamma=(\gamma_i)_{i\in I}$
une famille \emph{finie}
d'éléments de
$\Gamma$. 
Si $F$ est stable, alors $F(T/\gamma)$ est stable : cela a été démontré par Kuhlmann,
voir \cite{kuhlmann2010} ; pour des résultats partiels
antérieurs, voir \cite{gruson1968}, \cite{ohm1989}, \cite{temkin2010},
\cite{teissier2014} ; pour une preuve plus récente fondée sur les travaux de Hrushovski et Loeser, voir
\cite{ducros2014}.

\subsubsection{}\label{sss-gauss-stable2}
On ne suppose plus que $D=\{1\}$ ; nous allons brièvement expliquer
comment étendre le résultat de \ref{sss-gauss-stable1}
dans le contexte gradué, et pour une famille $\gamma$ éventuellement infinie.

Soit donc $r=(r_i)_{i\in I}$ une famille d'éléments de $D$, soit $\Gamma$ un groupe abélien ordonné
contenant $\abs{F\gpm}^\Q$ et soit $\gamma=(\gamma_i)$ une famille d'éléments de
$\Gamma$ ; soit $T=(T_i)$ une famille d'indéterminées. Supposons que $F$ est stable, 
et montrons que $F(r\backslash T/\gamma)$
est stable. Comme la stabilité se teste en degré 1, 
il résulte de \ref{ss-notation-tgamma}
qu'on peut se ramener au cas non gradué. 
Soit $L$ une extension finie de $F(T/\gamma)$. Il existe un sous-ensemble fini $J$ de $I$ 
et une extension finie $L_0$ de $F(T_i/\gamma_i)_{i\in J}$ telle que $L=L_0(T_i)_{i\in I\setminus J}$. Par le cas fini énoncé au \ref{sss-gauss-stable1}, $L_0$ est une extension sans
défaut de  $F(T_i/\gamma_i)_{i\in J}$. Il s'ensuit en vertu de \ref{par-defaut-gauss}
que $L=L_0(T_i)_{i\in I\setminus J}$ est une extension sans défaut du corps valué
$F(T/\gamma)=F(T_i/\gamma_i)_{i\in J}(T_i/\gamma_i)_{i\in I\setminus J}$.

\subsection{Platitude sur un annéloïde de valuation}
\label{ss-plat-annelval}
Soit $F$
un corpoïde valué. Un $F^\circ$-module  est plat si et seulement si il est sans torsion
(\cite{ducros2018}, preuve du lemme 5.2.1). 
Par ailleurs, toute $F^\circ$-algèbre
plate et de type fini est de présentation finie. En effet, soit $A$ une telle algèbre. Choisissons une famille finie
$r=(r_1,\ldots,r_n)$ d'éléments de $D$ et une surjection 
$F^\circ[r\backslash T]\to A$. Soit $I$ le noyau de cette surjection ; nous allons montrer que $I$ est de type fini.

Expliquons tout d'abord pourquoi il suffit 
de prouver que c'est le cas après extension des scalaires à $L^\circ$ où $L$ est une extension valuée de $F$ bien choisie. 
Écrivons $I$ comme la réunion filtrante d'une famille $(I_\lambda)$ d'idéaux de type fini. Par platitude, 
$I_{L^\circ}$ et les $I_{\lambda, L^\circ}$ sont des idéaux de $L^\circ[r\backslash T]$. Supposons $I_{L^\circ}$ de type fini ; il coïncide alors avec $I_{\lambda,L^\circ}$ pour un certain $\lambda$. Par conséquent, $(I/I_\lambda)_{L^\circ}$ est nul, et il suffit de montrer que cela entraîne la nullité de $(I/I_\lambda)$ ; nous allons plus précisément vérifier que tout sous$-F^\circ$-module de type fini de  $(I/I_\lambda)$  est nul, ce qui permettra de conclure. Soit $J$ un tel sous-module. Par platitude, $J_{L^\circ}$ s'injecte dans $(I/I_\lambda)_{L^\circ},$, et est donc nul. Il s'ensuit que $J_{L^\circ/L^{\circ \circ}}$ est nul, puis que $J_{F^\circ/F^{\circ \circ}}$ est nul (car $F^\circ/F^{\circ \circ}
\hookrightarrow L^\circ/L^{\circ \circ}$ est une extension de corpoïdes), et l'on conclut à l'aide du lemme de Nakayama
(gradué). 

Nous allons appliquer cette remarque en prenant pour $L$ le corpoïde valué
$F(r_1\backslash T_1/1,\ldots, r_n\backslash T_n/1, (1\backslash \Theta_\gamma/\gamma)_\gamma)$ où $\gamma$ parcourt le groupe $\abs{F\gpm}$.
Par construction, les $r_i$ appartiennent à $\mathfrak d(L)$
et $\abs{L\gpm}$ est égal à $\abs{(L^1)\gpm}$. En étendant les scalaires à $L^\circ$ on se ramène donc au cas où
les $r_i$ appartiennent à $\mathfrak d(F)$ (puis par renormalisation au cas où ils sont tous égaux à $1$), et 
où
$\abs{F\gpm}$ est égal à $\abs{(F^1)\gpm}$. Mais dans ce cas tout élément $f$ de $I$ est de la forme 
$ag$ où $a$ appartient à $(F^\circ)\gpm$ et où $g$ est de degré $1$. L'idéal $I$ est donc engendré par le noyau de 
la surjection $(F^\circ)^1[T]\to A^1$, qui est de type fini par le cas classique (non gradué) 
de l'assertion à démontrer, \emph{cf.} \cite{nagata1966} -- mentionnons que ce résultat a été étendu des anneaux de valuation
aux anneaux intègres \emph{quelconques}
par Raynaud et Gruson (\cite{raynaud-g1971}, cor. 3.4.7). 

\begin{lemm}\label{lem-plat-dimension}
Soit $Y\to X$ un morphisme plat, dominant et de présentation finie entre schémoïdes intègres, 
et soit $d$ la dimension de sa fibre générique. Les fibres de $Y\to X$ sont alors
purement de dimension $d$.
\end{lemm}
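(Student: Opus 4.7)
Le plan est de ramener cet énoncé à son analogue classique non gradué (\ie, dans le cas $D=\{1\}$), à savoir l'équidimensionalité des fibres pour un morphisme plat, dominant et de présentation finie entre schémas intègres, qui résulte du théorème 14.2.1 de EGA IV dans le cas noethérien et, hors de ce cas, de la descente noethérienne à la Raynaud-Gruson rendue possible par l'hypothèse de présentation finie.

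La question étant locale sur $X$ et $Y$, on peut supposer que $X=\spec A$ et $Y=\spec B$ sont affines, avec $A\to B$ plat, injectif (\ie, dominant) et de présentation finie entre annéloïdes intègres. Pour se ramener au cas $D=\{1\}$, on applique la méthode exposée au \ref{ss-methode-schemas} : on choisit une famille finie $r$ d'éléments de $D$ telle que, après extension du corpoïde des fractions $F$ de $A$ à $F(r\backslash T)$, tous les degrés qui interviennent dans les annéloïdes considérés soient absorbés dans $\mathfrak d(F(r\backslash T)\gpm)$. D'après la remarque \ref{rem-transcendant-f1}, $F(r\backslash T)^1$ est purement transcendant sur $F^1$, ce qui assure la conservation de l'intégrité et des degrés de transcendance au cours de l'extension. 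Grâce à l'isomorphisme $B\simeq A\otimes_{A^1}B^1$ valable dès que $\mathfrak d(B_{\neq 0})=\mathfrak d(A\gpm)$, le morphisme gradué se reconstitue à partir du morphisme classique $A^1\to B^1$, qui est encore plat, dominant et de présentation finie entre anneaux intègres, avec une fibre générique de dimension $d$.

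Le résultat classique d'équidimensionalité appliqué à ce morphisme non gradué fournit alors que toutes ses fibres sont purement de dimension $d$, d'où l'énoncé pour $Y\to X$ par la compatibilité de la dimension graduée aux extensions de base de type $F\hookrightarrow F(r\backslash T)$. Le point qui demande le plus d'attention est la préservation simultanée, tout au long de cette réduction, des hypothèses d'intégrité, de platitude, de dominance et de présentation finie ; ceci est garanti par la nature purement transcendante (en degré $1$) des extensions de Gauss utilisées.
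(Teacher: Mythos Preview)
Ta démonstration est correcte et suit essentiellement la même approche que celle du papier : réduction au cas non gradué par la méthode du \ref{ss-methode-schemas} (extension des scalaires à un corpoïde de la forme $F(r\backslash T)$ pour rendre tous les degrés inversibles), puis invocation du résultat classique d'équidimensionalité des fibres d'un morphisme plat. Le papier est plus laconique et cite la proposition~13.2.3 de EGA~IV$_3$ combinée au corollaire~6.1.2 de EGA~IV$_2$ plutôt que le théorème~14.2.1, mais c'est la même idée.
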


\begin{proof}
Par une extension des scalaires à un corpoïde de
la forme $F(r\backslash T)$ avec $r$ convenable, on peut supposer $Y\to X$ est induit
par extension des scalaires de $F^1$ à $F$ par un morphisme de schémas usuels ; il suffit
maintenant
d'établir le résultat pour ce dernier morphisme, c'est-à-dire dans le cadre non gradué. 
Mais il est alors bien connu ; combiner par exemple la proposition 13.2.3 de \cite{ega43}
et le corollaire 6.1.2 de \cite{ega42}.
\end{proof}

\begin{coro}\label{coro-controle-dimfibres}
Soit $F$ un corpoïde valué et soit $X$ un $F^\circ$-schémoïde de type fini.
tel que le morphisme $X\to \spec F^circ$ soit générisant. 
Soit
$\xi$ un point de $\spec F^\circ$, soit $Y$ une composante irréductible de $X_\xi$, soit $d$
sa dimension et soit 
$\tau$ son point générique. Il existe un ouvert $U$ de $X$ contenant $\tau$ tel que les fibres de 
$U\to X$ soient toutes purement de dimension $d$. 
\end{coro}

\begin{proof} 
L'énoncé à montrer est insensible aux phénomènes de nilpotence ; on peut
donc supposer $X$ réduit. 
Soit $\eta$ le point générique de $\spec F^\circ$. Comme $X\to \spec F^\circ$ est générisant, il existe
une générisation de $\tau$ sur la fibre générique $X_\eta$ ; autrement dit, l'ensemble $\mathscr Z$
des composantes irréductibles $Z$
de $X_\eta$ telles que $\tau \in \overline Z$ est non vide. 
Soit $E$ la réunion des composantes irréductibles $Z$ de $X_\eta$ qui n'appartiennent pas à $\mathscr Z$. Par construction, 
$\tau$ n'appartient pas au fermé $\overline E$ de $X$ ;
posons $U=X\setminus \overline E$, et montrons que les fibres de $U\to X$ sont purement de dimension $d$. 

Soit $Z\in \mathscr Z$. Le $F^\circ$-schémoïde
de type fini
$\overline Z$ muni de sa structure réduite est sans $F^\circ$-torsion, et partant plat
(et dès lors de présentation finie, \emph{cf.} \ref{ss-plat-annelval}). 
Le lemme \ref{lem-plat-dimension} assure que les fibres de $\overline Z\to X$ sont purement de dimension $\dim Z$ ; or
$Y$ est une composante irréductible de $\overline Z_\xi$ puisque $\tau\in \overline Z$ ; en conséquence, $\dim Z=d$ et
les fibres de $\overline Z\to X$ sont toutes purement de dimension $d$. 

Comme $X\to \spec F^\circ$
est générisant, tout point de 
$U$ appartient à $\overline Z$ pour au moins une composante irréductible $Z$ de $X_\eta$ ;
puisque $U$ est le complémentaire de $\overline E$, la composante $Z$ appartient nécessairement à $\mathscr Z$. En conséquence, 
$U$ est un ouvert du schémoïde réduit $\bigcup_{Z\in \mathscr Z}\overline Z$. Les fibres de $\overline Z\to X$
étant toutes purement de dimension $d$ quel que soit $Z\in \mathscr Z$, il en va de même de celles de $(\bigcup_{Z\in \mathscr Z}\overline Z)
\to X$, puis de celles de $U\to X$.
\end{proof}

\begin{defi}
Soit $R$ un annéloïde intègre de corpoïde
des fractions $K$
et soit $A$ une $R$-algèbre 
sans $R$-torsion et réduite. Le \emph{normalisé faible}
de $A$ est la fermeture intégrale de $A$ dans $A_K$.
Nous dirons
que la $R$-algèbre $A$ est \emph{faiblement normale}
si elle est égale à son normalisé faible. 

Ces notions permettent de définir par recollement la notion de normalisé faible
d'un $R$-schémoïde réduit sans $R$-torsion, puis celle
de $R$-schémoïde faiblement normal.
\end{defi}

\begin{lemm}\label{lem-entier-valuation}
Soit $F$ un corpoïde valué, soit $A$ une $F^\circ$-algèbre plate
et soit 
$a$ un élément de $A_F$. Les assertions suivantes sont équivalentes : 
\begin{enumerate}[i]
\item L'élément $a$ est entier sur $A$ ; 
\item on a $\abs {a(\xi)}\leq 1$ pour tout $\xi \in \spv(A_F, A)$.
\end{enumerate}
\end{lemm}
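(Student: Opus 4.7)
\emph{Démonstration (plan).} Il s'agit d'une variante graduée du critère valuatif classique d'intégralité (\cf{} Bourbaki, \emph{Algèbre commutative}, chap.\,VI, \S\,1.3, Cor.\,2 du Th.\,3) ; esquissons l'adaptation.

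L'implication $(i)\Rightarrow(ii)$ est immédiate : si $a$ satisfait une relation entière homogène $a^n+b_{n-1}a^{n-1}+\cdots+b_0=0$, avec $b_i\in A^{\mathfrak d(a)^{n-i}}$, alors pour tout $\xi\in\spv(A_F,A)$ on a par l'inégalité ultramétrique et $\abs{b_i(\xi)}\leq 1$
\[\abs{a(\xi)}^n\leq \max_{0\leq i<n}\abs{a(\xi)}^i,\]
ce qui force $\abs{a(\xi)}\leq 1$.

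Pour la réciproque $(ii)\Rightarrow (i)$, on raisonne par contraposée. Supposons $a\in A_F$ non entier sur $A$ (donc en particulier non nilpotent, sans quoi $a^n=0$ serait une équation d'intégralité) et cherchons à construire $\xi\in\spv(A_F,A)$ tel que $\abs{a(\xi)}>1$. On considère la localisation $A_F[a\inv]$ et le sous-annéloïde $A':=A[a\inv]$ de $A_F[a\inv]$, défini comme l'image de l'évaluation $A[\mathfrak d(a)\inv\backslash T]\to A_F[a\inv]$ envoyant $T$ sur $a\inv$. Le point clé est l'équivalence suivante : $a\inv$ est inversible dans $A'$ si et seulement si $a\in A'$, ce qui équivaut à ce que $a$ satisfasse une relation unitaire à coefficients dans $A$, \ie{} soit entier sur $A$. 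Sous notre hypothèse, $a\inv$ n'est donc pas inversible dans $A'$ et appartient à un idéal maximal $\mathfrak m$ ; on choisit un idéal premier minimal $\mathfrak q\subseteq \mathfrak m$, et l'on pose $K:=\mathrm{Frac}(A'/\mathfrak q)$. La localisation $(A'/\mathfrak q)_{\mathfrak m/\mathfrak q}$, sous-annéloïde local de $K$, est alors dominée par un annéloïde de valuation $V$ de $K$, dans lequel $\abs{a\inv}_V<1$ (car $a\inv\in V^{\circ\circ}$), soit $\abs a_V>1$ ; et $A\subseteq A'\to V$ donne $\abs A_V\leq 1$.

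Il reste à identifier cette valuation à un point de $\spv(A_F,A)$, ce qui constitue le cœur technique du résultat. On s'appuie pour cela sur la version graduée du théorème d'extension des valuations (Chevalley), qui permet de prolonger $\abs\cdot_V$ en une valuation $\widetilde v$ sur $A_F[a\inv]$ tout entier ; son support est un idéal premier de $A_F[a\inv]$, correspondant à un idéal premier $\mathfrak p$ de $A_F$ ne contenant pas $a$. Le couple $(\mathfrak p,\widetilde v)$ est alors le $\xi$ recherché, puisque $\widetilde v(A)\leq 1$ et $\widetilde v(a)>1$ par construction. La platitude de $A$ sur $F^\circ$ assure l'injectivité $A\hookrightarrow A_F$ et donc que la restriction à $A$ des valuations sur $A_F$ a bien le sens requis ; les aspects proprement gradués (choix des degrés des indéterminées, formation des corpoïdes résiduels, \etc) se traitent sans difficulté grâce au formalisme de la section~\ref{s-alg-grad}.
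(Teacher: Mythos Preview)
Your approach follows the classical algebraic route (Bourbaki), whereas the paper gives a geometric argument: it takes the schematic closure $\overline Z$ of the graph $\{T=a\}\subset \A^{1,r}_{A_F}$ inside $\P^{1,r}_A$, uses flatness of $\overline Z$ over $F^\circ$ (torsion-freeness of the schematic closure) to show that every point of $\overline Z$ generalizes to the generic fibre $Z$, and then invokes (ii) to conclude that $\overline Z$ avoids the section at infinity, hence is finite over $\spec A$. The two arguments are in fact cousins: the affine chart of $\overline Z$ at infinity is precisely your $\spec A'$.

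There is however a genuine gap in your final step. Invoking a ``Chevalley extension theorem'' to prolong $\abs\cdot_V$ from $A'$ to $A_F[a\inv]$ is not the right tool: the latter is not a corpoïde but merely the localization $A'[S\inv]$ with $S$ the image of $F^\circ\setminus\{0\}$, and the question is simply whether your minimal prime $\mathfrak q$ survives this localization, \ie{} whether $\mathfrak q$ lies over the generic point of $\spec F^\circ$. This requires knowing that $A'$ is $F^\circ$-flat --- which it is, being a sub-annéloïde of the $F$-module $A_F[a\inv]$ and hence torsion-free --- so that the flat map $\spec A'\to\spec F^\circ$ is generalizing and minimal primes go to the generic point. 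Once this is established, $\mathfrak q$ corresponds directly to a prime $\mathfrak p$ of $A_F$ with $\kappa(\mathfrak p)=K$ (and automatically $a\inv\notin\mathfrak q$, since $a\inv$ is a unit in $A_F[a\inv]$), so no extension of valuations is needed at all. This is exactly how flatness enters the paper's proof, and your remark that flatness merely ensures the injectivity $A\hookrightarrow A_F$ misses its true role.
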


\begin{proof}Il est clair que (i)$\Rightarrow$(ii). Supposons maintenant que (ii) soit vraie. 
Posons $r=\mathfrak d(a)$.  Soit $\A^{1,r}_A$ le schémoïde $\spec A[r\backslash T]$
et soit $\P^{1,r}_A$ le recollement de $\A^{1,r}_A$ et $\A^{1,r\inv}_A=\spec A[r\inv \backslash S]$ 
le long de l'isomorphisme $\mathrm D(T)\simeq \mathrm D(S), S\mapsto T\inv$.
Soit $Z$ le sous-schémoïde fermé de
$\A^{1,r}_{A_F}:=\spec A_F[r\backslash T]$ défini par l'idéal $(T-a)$
et soit $\overline Z$ son adhérence schématique
dans $\P^{1,r}_A$, c'est-à-dire le sous-schémoïde fermé de $\P^{1,r}_A$ dont la trace sur $\A^{1,r}_A$ est définie par le noyau de l'évaluation en $a$, et dont la trace sur $\A^{1,r\inv}_A$ est définie par le noyau de
$A[r\inv\backslash S]\to A_F[r\inv \backslash S]/(aS-1)$. 

Soit $z\in \overline Z$. Par construction $\overline Z$ est sans $F^\circ$-torsion, et est dès lors plat sur $F^\circ$. Un morphisme plat étant générisant (la preuve est la même \emph{mutatis mutandis} que dans le cas non gradué), $z$ possède une générisation $\zeta$ située sur $Z$. Le choix d'un annéloïde de valuation dominant $\mathscr O_{\overline{\{\zeta\}}, z}$ (où $\overline{\{\zeta\}}$
est muni de sa structure réduite) fournit un point $\xi$ de $\spv(A_F, A)$ ; comme
$\abs {a(\xi)}\leq 1$
d'après l'hypothèse (ii), le point $z\in \P^{1,r}_A$ n'est pas le «point à l'infini de sa fibre» (défini par l'équation $S=0$). Par conséquent, la section à l'infini de  $\P^{1,r}_A\to \spec A$ ne rencontre pas $\overline Z$. Celui-ci est donc à la fois «projectif» et affine sur $\spec A$, et partant fini (on peut pour le voir se ramener au cas non gradué par le procédé général décrit au \ref{ss-methode-schemas}). Il s'ensuit que $a$ est entier sur $A$. 
\end{proof}

\begin{lemm}\label{lem-normal-kr}
Soit $F$ un corpoïde valué et soit $r$ une famille
d'éléments de $D$. Soit $X$ un $F^\circ$-schémoïde plat
et soit $Y$ le normalisé faible de $X$. 
Le normalisé faible du $F(r\backslash T/1)^\circ$-schémoïde
$X_{F(r\backslash T/1)^\circ}$
s'identifie à $Y_{F(r\backslash T/1)^\circ}$. 
\end{lemm}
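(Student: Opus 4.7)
Paragraphe 1: La stratégie est d'identifier $F'^\circ$ à une localisation d'un annéloïde de polynômes, puis d'appliquer le critère valuatif d'intégrité (lemme \ref{lem-entier-valuation}). La question étant locale, on se ramène au cas affine : $X=\spec A$ avec $A$ une $F^\circ$-algèbre plate et réduite, et $Y=\spec B$ où $B$ est la fermeture intégrale de $A$ dans $A_F$. Posons $F'=F(r\backslash T/1)$, $R'=F'^\circ$ et $\mathfrak p=F^{\circ\circ}[r\backslash T]$, qui est un idéal premier de $F^\circ[r\backslash T]$. On vérifie alors que $R'=F^\circ[r\backslash T]_{\mathfrak p}$ : tout élément $f/g$ de $R'$, avec $f,g\in F[r\backslash T]$, satisfait $\abs f_{\mathrm{Gau\ss}}\leq \abs g_{\mathrm{Gau\ss}}$, et en divisant $f$ et $g$ par un coefficient de $g$ de norme maximale, on réécrit $f/g$ sous la forme voulue. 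Il en résulte les identifications $A_{R'}=A[r\backslash T]_{\mathfrak p}$ et $B_{R'}=B[r\backslash T]_{\mathfrak p}$. L'annéloïde $A_{R'}$ est plat sur $R'$ (par changement de base) et réduit (il s'injecte dans $A_{F'}$, qui est une localisation de l'annéloïde de polynômes réduit $A_F[r\backslash T]$), donc son normalisé faible est bien défini ; l'inclusion $B_{R'}\subset\text{(normalisé faible de $A_{R'}$)}$ est immédiate puisque $B_{R'}$ est entier sur $A_{R'}$ par changement de base et contenu dans $A_{F'}$.

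Paragraphe 2: Pour l'inclusion réciproque, soit $c\in A_{F'}$ entier sur $A_{R'}$. Comme $A_{F'}$ est la localisation de $A_F[r\backslash T]$ par $F[r\backslash T]\setminus\{0\}$, on écrit $c=P/Q$ avec $P\in A_F[r\backslash T]$ et $Q\in F[r\backslash T]\setminus\{0\}$ ; en divisant $P$ et $Q$ par un coefficient de $Q$ atteignant la norme de Gau\ss, on peut de plus supposer $Q\in F^\circ[r\backslash T]\setminus\mathfrak p$, ce qui revient à demander que $Q$ possède un coefficient inversible dans $F^\circ$. Pour chaque $\xi\in\spv(A_F, A)$, on considère la valuation de Gau\ss\ de paramètre $1$, notée $\eta$, sur $\kappa(\xi)(r\backslash T)$ prolongeant la valuation de $\kappa(\xi)$ associée à $\xi$. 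Via la composition naturelle $A_{F'}\to\kappa(\xi)\otimes_F F'\hookrightarrow\kappa(\xi)(r\backslash T)$, cela définit un élément de $\spv(A_{F'})$, que l'on vérifie appartenir à $\spv(A_{F'}, A_{R'})$ : sa valeur sur $A[r\backslash T]$ est manifestement $\leq 1$, et tout polynôme de $F^\circ[r\backslash T]\setminus\mathfrak p$ a norme de Gau\ss\ exactement $1$ grâce à son coefficient inversible. Par le lemme \ref{lem-entier-valuation} appliqué à $c$, on a $\abs{c(\eta)}\leq 1$, c'est-à-dire $\max_I \abs{p_I(\xi)}\leq 1$ en notant $P=\sum p_I T^I$. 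Cette inégalité valant pour tout $\xi\in\spv(A_F, A)$, une seconde application du même lemme (cette fois à $A$) entraîne que chaque $p_I$ est entier sur $A$, donc $p_I\in B$. Ainsi $P\in B[r\backslash T]$ et $c=P/Q\in B[r\backslash T]_{\mathfrak p}=B_{R'}$, ce qui achève la démonstration.

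Paragraphe 3: L'obstacle principal est la construction correcte de la valuation $\eta$ et la vérification qu'elle sitte dans $\spv(A_{F'}, A_{R'})$ ; en particulier, le contrôle des « dénominateurs » appartenant à $F^\circ[r\backslash T]\setminus\mathfrak p$ est rendu possible précisément par le choix du paramètre $1$ dans la valuation de Gau\ss, qui impose la norme $1$ à tout polynôme possédant un coefficient inversible dans $F^\circ$. Tout le reste de la preuve — identifications de localisations, scission $c=P/Q$ avec contraintes requises, descente de l'intégrité coefficient par coefficient — relève alors de manipulations algébriques standard, la seule vigilance à maintenir étant celle des degrés dans le cadre gradué.
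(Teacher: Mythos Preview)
Your proof is correct and follows essentially the same approach as the paper's. Both reduce to the affine case, normalize the denominator so that it has a unit coefficient, construct the Gauss extension (parameter $1$) of each $\xi\in\spv(A_F,A)$, verify it lies in $\spv(A_{F'},A_{R'})$, and then invoke Lemma \ref{lem-entier-valuation} twice to conclude coefficient-by-coefficient. The only organizational differences are that the paper first treats the special case $B=A$ and then deduces the general case, while you handle both inclusions at once; and that you make explicit the identification $F(r\backslash T/1)^\circ=F^\circ[r\backslash T]_{\mathfrak p}$, which the paper leaves implicit.
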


\begin{proof}
On peut supposer
que les schémoïdes $X$ et $Y$
sont affines ; posons $A=\mathscr O_X(X)$ et $B=\mathscr O_Y(Y)$.
Traitons tout d'abord le cas où $B=A$,
c'est-à-dire celui où la $F^\circ$-algèbre $A$ est faiblement normale ; il s'agit
alors de prouver que la $F(r\backslash T/1)^\circ$-algèbre $A_{F(r\backslash T/1)^\circ}$ est  
faiblement normale. 
Soit $a$ un élément 
de $A_{F(r\backslash T/1)}$ entier sur $A_{F(r\backslash T/1)^\circ}$ ; ceci revient
en vertu du lemme \ref{lem-entier-valuation} à demander que 
$\abs{a(\xi)}\leq 1$ pour tout  point
$\xi$ appartenant à $\spv(A_{F(r\backslash T/1)}, A_{F(r\backslash T/1)^\circ})$. Notre but
est de montrer que $a\in A_{F(r\backslash T/1)^\circ}$.
Écrivons \[a=\frac{\sum a_I T^I}{\sum \lambda_I T^I}\] où les $a_I$ appartiennent à $A_F$ et les $\lambda_I$
appartiennent à $F$ (et sont non tous nuls). Quitte à multiplier numérateur et dénominateur par un scalaire convenable, on peut supposer que $\max \abs{\lambda_I}=1$, c'est-à-dire que $\lambda:=\sum \lambda_I T^I$ appartient à 
$F(r\backslash T/1)^{\circ \times}$. Mais dans ce cas $\abs{\lambda(\xi)}=1$
pour tout $\xi\in \spv(F(r\backslash T/1),F(r\backslash T/1)^\circ)$,
ce qui entraîne que 
 $\abs{\lambda(\xi)}=1$
pour tout $\xi\in \spv(A_{F(r\backslash T/1)}, A_{F(r\backslash T/1)^\circ})$ ; on peut donc remplacer $a$ par $\lambda a$ et se ramener ainsi
au cas où $a$ est de la forme $\sum a_I T ^I$, les $a_I$ étant des éléments de $A_F$. 
Soit $\xi\in \spv (A_F, A)$ et soit $\eta$ l'antécédent
de $\xi$
sur $\spv(A_{F(r\backslash T/1)}, A_{F(r\backslash T/1)^\circ})$ induit par
le morphisme naturel $A_{F(r\backslash T/1)}\to \kappa(\xi)(r\backslash T/1)$. On a $\abs{a(\eta)}\leq 1$ par hypothèse
sur $a$. Par définition de $\eta$, cela signifie que $\abs{a_I(\xi)}\leq 1$ pour tout $I$. Ceci valant quelque soit $\xi\in \spv (A_F,A)$,
chacun des $a_I$ est entier sur $A$ ; puisque $A$ est $F^\circ$-normale, 
ceci entraîne que chacun des $a_I$ appartient à $A$, puis que $a\in A_{F(r\backslash T/1)^\circ}$. Par conséquent, $A_{F(r\backslash T)^\circ}$ est 
$F(r\backslash T/1)^\circ$-normale. 

Traitons maintenant le cas général. Par construction, $B_{F(r\backslash T/1)^\circ}$ est un sous-annéloïde de
$B_{F(r\backslash T/1)}=A_{F(r\backslash T/1)}$ qui contient $A_{F(r\backslash T/1)^\circ}$ et est entier sur celui-ci ; par le cas
particulier déjà traité, la $F(r\backslash T/1)^\circ$-algèbre $B_{F(r\backslash T/1)^\circ}$ est faiblement
normale, ce qui achève la démonstration. 
\end{proof}

\begin{lemm}\label{lem-normal-trivgrad}
Soit $F$ un corpoïde valué et soit $A$ une $F^\circ$-algèbre
fidèlement plate. 
Supposons que $\mathfrak d(A_{\neq 0})=\mathfrak d(F\gpm)=\mathfrak d(F^{\circ \gpm})$ et que ce groupe est divisible. 
Soit $B$ le $F^{\circ 1}$-normalisé de $A^1$. Le produit tensoriel
$B\otimes_{F^{\circ 1}}F^\circ$ s'identifie au $F^\circ$-normalisé
de $A$. 
\end{lemm}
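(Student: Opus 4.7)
The plan is to reduce the whole question to degree $1$ by exploiting the fact that every degree appearing in $\mathfrak d(F\gpm)$ is realised by a unit of $F^\circ$. The foundational input is the remark of \ref{ss-methode-schemas}: since $\mathfrak d(A_{\neq 0}) = \mathfrak d(F\gpm) = \mathfrak d(F^{\circ\gpm})$, there is a canonical isomorphism $A \simeq A^1 \otimes_{F^{\circ 1}} F^\circ$, and applied to $F$ itself it gives $F \simeq F^1 \otimes_{F^{\circ 1}} F^\circ$, whence
\[A_F = A\otimes_{F^\circ} F \simeq A^1_{F^1}\otimes_{F^{\circ 1}} F^\circ.\]
Choosing for every $d \in \mathfrak d(F\gpm)$ a unit $u_d \in F^{\circ\gpm}$ of degree $d$, one gets a decomposition $F^\circ = \bigoplus_d u_d F^{\circ 1}$, so $F^\circ$ is a \emph{free} (in particular fidèlement plat) $F^{\circ 1}$-module; therefore $-\otimes_{F^{\circ 1}} F^\circ$ is exact.

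Setting $\overline A := B \otimes_{F^{\circ 1}} F^\circ$, applying $-\otimes_{F^{\circ 1}} F^\circ$ to the chain $A^1 \hookrightarrow B \hookrightarrow A^1_{F^1}$ produces a chain $A \hookrightarrow \overline A \hookrightarrow A_F$. The integrality of $\overline A$ over $A$ is then immediate: integrality is preserved by base change, so each element $b\otimes 1$ (with $b\in B$) is integral over $A^1 \otimes_{F^{\circ 1}} F^\circ = A$, and since the integral elements form a subring containing in addition $1\otimes F^\circ$, they fill all of $\overline A$.

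The main (and only slightly delicate) step is the converse: any $a \in A_F$ integral over $A$ must lie in $\overline A$. My plan is to set $r := \mathfrak d(a)$, pick a unit $u \in F^{\circ\gpm}$ of degree $r$ (available by the central hypothesis), and observe that $a' := u\inv a$ has degree $1$, hence lies in $(A_F)^1 = A^1_{F^1}$. Multiplying an integrality relation $a^n + \sum_{i<n} \alpha_i a^i = 0$ (with $\alpha_i \in A^{r^{n-i}}$) by $u^{-n}$ rewrites it as a monic equation
\[(a')^n + \sum_{i<n} (u^{i-n}\alpha_i)\,(a')^i = 0\]
whose coefficients $u^{i-n}\alpha_i$ have degree $1$ and lie in $A^1$. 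By the defining property of $B$ this forces $a' \in B$, whence $a = u\cdot a' \in F^\circ\cdot B \subseteq \overline A$, concluding the proof.
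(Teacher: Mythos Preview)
Your proof is correct and rests on the same key observation as the paper's: since $\mathfrak d(F^{\circ\gpm})=\mathfrak d(F\gpm)$ realises every relevant degree by a unit of $F^\circ$, any graded question can be renormalised to degree~$1$.

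The organisation differs slightly. The paper first reduces, ``exactement comme dans la preuve du lemme~\ref{lem-normal-kr}'', to the special case $B=A^1$ (i.e.\ $A^1$ already faiblement normale), and then shows that any intermediate integral $F^\circ$-plate algebra $C$ between $A$ and $A_F$ satisfies $C=C^1\otimes_{F^{\circ 1}}F^\circ$ with $C^1=A^1$, forcing $C=A$. You instead work directly with the candidate $\overline A=B\otimes_{F^{\circ 1}}F^\circ$ and verify elementwise that it is exactly the integral closure of $A$ in $A_F$, using the unit $u$ of degree~$r$ to transport a monic relation over $A$ for $a$ into a monic relation over $A^1$ for $u\inv a$. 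Your route avoids the preliminary reduction and the appeal to the previous lemma's strategy, at the modest cost of redoing the integrality bookkeeping by hand; the paper's route is terser but leans on the pattern already established. Both are equally valid and equally short.
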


\begin{proof}
En vertu de nos hypothèses
on a $A=A^1\otimes_{F^{\circ 1}}F^\circ$.
Exactement comme dans la preuve du lemme \ref{lem-normal-kr}
ci-dessus, il suffit de traiter le cas où $B=A^1$. 
Supposons donc que $A^1$ est $F^{\circ 1}$-normale, et soit $C$
une $A$-algèbre entière et $F^\circ$-plate telle que $A_F\to C_F$
soit un isomorphisme. 
Comme $\mathfrak d(A_{\neq 0})$ est divisible on a 
\[\mathfrak d(C_{\neq 0})=\mathfrak d(A_{\neq 0})=\mathfrak d(F\gpm)=\mathfrak d(F^{\circ \gpm})\]
si bien que $C=C^1\otimes_{F^{\circ 1}}F=C^1\otimes_{A^1}A$. 
Comme $C^1$ est $F^{\circ 1}$-normale, $A^1\to C^1$ est un isomorphisme. 
Par conséquent, $A\to C$ est un isomorphisme et $A$ est $F^\circ$-normale. 
\end{proof}

\begin{theo}[d'après Grauert-Remmert et Temkin]\label{theo-apres-temkin}
Soit $F$ un corpoïde valué et soit $X$ un $F^\circ$-schémoïde plat
et
de
type fini. 
\begin{enumerate}[1]
\item Si le $F^\circ$-schémoïde $X$ est à fibres
géométriquement réduites, il est
faiblement normal. 
\item Supposons $F$ stable et de hauteur finie, $\abs{F\gpm}$ divisible
et $X_F$ réduit. 
\begin{enumerate}[b]
\item Si le $F^\circ$-schémoïde $X$ est faiblement normal
il est à fibres réduites. 
\item Le normalisé faible de $X$ est fini sur $X$. 
\end{enumerate}
\end{enumerate}
\end{theo}

\begin{proof}
Les deux assertions sont locales sur $X$,
ce qui permet de supposer que ce 
dernier est affine.

Commençons par l'assertion (1).
Les lemmes \ref{lem-normal-kr}
et \ref{lem-normal-trivgrad}
permettent de se ramener au cas non gradué, 
dans lequel (1) vaut d'après \cite{temkin2010}, Prop.
3.5.2. 

Supposons maintenant $F$ stable et de hauteur finie, $\abs{F\gpm}$ divisible
et $X_F$ réduit. Nous allons montrer que le normalisé
faible $Y$ de 
$X$ est fini sur $X$ et à fibres réduites sur $\spec F^\circ$,
ce qui montrera à la fois (2a) et (2b). Là encore, les lemmes 
\ref{lem-normal-kr} et \ref{lem-normal-trivgrad}
permettent de se ramener au cas non gradué ; l'assertion en vue vaut alors
 d'après \cite{temkin2010}, étapes 2, 3 et 4 
 de la preuve du théorème 3.5.5. Précisons que Temkin travaille avec un corps
 valué 
 $F$ séparablement clos, mais son raisonnement s'applique en fait tout aussi bien lorsque $F$
 est stable à groupe des valeurs divisible, en raison des faits suivants : 
 si $F$ est stable et de hauteur $1$, son complété $\widehat F$ est encore stable ; le 
 théorème de finitude de Grauert et Remmert (ingrédient crucial
 de l'étape 2 de la preuve de Temkin) vaut encore pour les corps stables à groupe des valeurs divisibles (\cite{bosch-g-r1984}, 6.4.1, Corollary 5). 
\end{proof}

%

\begin{coro}[Théorème de la fibre réduite, version graduée et valuée]
\label{coro-fibre-reduite}
Soit $F$ un corpoïde valué et soit $X$ un $F^\circ$-schémoïde
plat de type fini telle que $X_F$
soit géométriquement réduit. Il existe une extension valuée finie $L$ de $F$ telle que le
normalisé faible $Y$ du $L^\circ$-schéma $X_{L^\circ}$ soit fini sur $X_{L^\circ}$ et tel que 
les fibres de $Y\to \spec L^\circ$ soient géométriquement réduites et à composantes irréductibles géométriquement irréductibles. Pour toute
extension valuée $\Lambda$ de $L$, le schémoïde
$Y_{\Lambda^\circ}$ est le normalisé faible
du $\Lambda^\circ$-schémoïde $X_{\Lambda^\circ}$. 
\end{coro}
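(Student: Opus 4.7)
The plan is to establish the conclusion first over an algebraic closure $\overline F$ of $F$ equipped with a fixed prolongation of its valuation, and then descend to a finite valued subextension by a limit argument.

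Since any algebraically closed valued corpoid is stable with divisible value group, the remark following Corollary~\ref{cor-stable-japonais} ensures that its hypotheses apply to $\overline F$: the weak normalization $\overline Y$ of $X_{\overline F^\circ}$ is finite over $X_{\overline F^\circ}$ and has reduced fibers over $\spec \overline F^\circ$. Moreover, every residue corpoid at a point of $\spec \overline F^\circ$ is the residue corpoid of a coarsening of the valuation of $\overline F$ and is therefore itself algebraically closed, hence perfect; it follows that these reduced fibers are geometrically reduced, and that their irreducible components (being irreducible over an algebraically closed corpoid) are geometrically irreducible.

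I would next descend $\overline Y$ to a finite valued subextension $L$ of $F$ in $\overline F$. The $\overline F^\circ$-algebra $\overline Y$ is flat and of finite type (hence of finite presentation, by the result recalled before Lemma~\ref{lem-plat-dimension}) and finite over $X_{\overline F^\circ}$; since $\overline F$ is the filtered union of its finite valued subextensions of $F$, a standard limit argument yields a finite extension $L\subset \overline F$ of $F$ together with a finite flat $X_{L^\circ}$-algebra $Y$ satisfying $Y\otimes_{L^\circ} \overline F^\circ\simeq\overline Y$. Enlarging $L$ within $\overline F$ (while keeping it finite over $F$) and using the finite number of irreducible components of $\overline Y$, I can further arrange that every irreducible component of every fiber of $Y\to \spec L^\circ$ is geometrically irreducible. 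Geometric reducedness of the fibers then comes for free: for $\mathfrak p\in\spec L^\circ$ and any $\mathfrak q\in\spec \overline F^\circ$ lying above $\mathfrak p$, the base change $Y_{\mathfrak p}\otimes_{\kappa(\mathfrak p)}\kappa(\mathfrak q)$ is the reduced fiber $\overline Y_{\mathfrak q}$, and $\kappa(\mathfrak q)$ is a perfect extension of $\kappa(\mathfrak p)$.

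Once $L$ is chosen as above, $Y$ is flat over $L^\circ$ with geometrically reduced fibers, so Theorem~\ref{theo-apres-temkin}(1) shows that $Y$ is weakly normal; being integral over $X_{L^\circ}$ and satisfying $Y\otimes_{L^\circ}L=X_L$, it must coincide with the weak normalization of $X_{L^\circ}$. For the last assertion, let $\Lambda$ be any valued extension of $L$; the fibers of $Y_{\Lambda^\circ}\to\spec\Lambda^\circ$ are base changes of the geometrically reduced fibers of $Y$ and are therefore reduced, so Theorem~\ref{theo-apres-temkin}(1) again yields that $Y_{\Lambda^\circ}$ is weakly normal, and the same argument as before identifies it with the weak normalization of $X_{\Lambda^\circ}$. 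The main obstacle in this plan is the descent step: arranging a single finite extension $L$ over which the geometric irreducibility of the components of \emph{all} fibers of $Y$ can be witnessed. This requires exploiting in an essential way the finite number of irreducible components of $\overline Y$ together with the finite-type hypothesis on $X$.
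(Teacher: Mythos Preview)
Your approach is essentially the paper's, but you omit the one step that makes the ``main obstacle'' disappear: before passing to an algebraic closure, the paper first reduces to the case where the valuation of $F$ has \emph{finite height}. Since $X$ is flat of finite type over $F^\circ$ it is of finite presentation, so it descends to $X_0$ over $F_0^\circ$ for some sub-corpoïde $F_0\subset F$ of finite transcendence degree over the prime corpoïde, whose valuation therefore has finite height; proving the statement for $(F_0,X_0)$ and then invoking the universal clause (the last sentence of the corollary) with $\Lambda=L_0\otimes_{F_0}F$ gives the result for $(F,X)$. Once $F$ has finite height, so does any finite $L/F$, hence $\spec L^\circ$ is a \emph{finite} set; there are only finitely many fibers, and a single further finite enlargement of $L$ makes all their irreducible components geometrically irreducible.

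Your substitute for this---``using the finite number of irreducible components of $\overline Y$''---does not close the gap. The irreducible components of $\overline Y$ are indeed finitely many (they biject with those of the generic fibre $X_{\overline F}$ by flatness), but they do not control the components of the individual fibres $Y_{\mathfrak p}$: a single irreducible $\overline F^\circ$-scheme can have fibres with arbitrarily many components, and if $\spec L^\circ$ is infinite you are imposing infinitely many splitting conditions, each of which might force a further finite extension of $L$. Nothing in your sketch bounds this process. The finite-type hypothesis on $X$ is exactly what allows the reduction to finite height, but you have to use it \emph{before} choosing $L$, not after. Apart from this point your argument (application of Corollaire~\ref{cor-stable-japonais} to $\overline F$, descent of $\overline Y$, geometric reducedness of fibres via perfectness of the residue corpoïdes of $\overline F^\circ$, and the two invocations of Théorème~\ref{theo-apres-temkin}(1)) matches the paper's proof.
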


\begin{proof}
Comme $X$ est plat, il est de présentation finie et un argument
de passage à la limite permet de supposer que la valuation de $F$ est de hauteur finie. Soit $K$ une clôture algébrique de $F$ munie d'un prolongement quelconque de la valuation de $F$. 
L'assertion (2)
du théorème \ref{theo-apres-temkin}
assure
que le normalisé faible $Z$ du $K^\circ$-schémoïde $X_{K^\circ}$ est fini sur $X_{K^\circ}$
et que les fibres
de $Z\to \spec K^\circ$ sont réduites. Il existe un sous-corpoïde $L$ de $K$
fini sur $F$ et un $L^\circ$-schémoïde plat et de type fini $Y$, fini sur $X_{L^\circ}$, tel
que $Y_L=X_L$ et $Z=Y_{K^\circ}$ ; les fibres de $Y\to \spec L^\circ$
sont géométriquement réduites. 
Il résulte dès lors de l'assertion (1) du théorème \ref{theo-apres-temkin}
que pour toute extension valuée $\Lambda$ de $L$, le schémoïde
$Y_{\Lambda^\circ}$ est le normalisé du $\Lambda^\circ$-schémoïde $X_{\Lambda^\circ}$ ; c'est
en particulier le cas lorsque $\Lambda=L$. 
Enfin comme la valuation de $F$ est de hauteur finie le schéma $\spec L^\circ$ est ensemblistement fini ; quitte à agrandir $L$, on peut donc supposer que les composantes irréductibles des fibres de $Y\to \spec L^\circ$ sont géométriquement irréductibles. 
\end{proof}

\begin{prop}\label{prop-schema-decoupage}
Soit $F$ un corpoïde valué et soit $X$ un
$F^\circ$-schémoïde plat, de présentation finie et
à fibres réduites. 

\begin{enumerate}[1]
\item Soient $\eta$ et $\xi$
deux points de $\spec F^\circ$
avec $\xi\in \overline{\{\eta\}}$. 
Soient $U_1,\ldots, U_m$
les composantes connexes de $X_\eta$. Les $(\overline{U_i})_\xi$ sont
alors des ouverts fermés deux à deux disjoints de $X_\xi$ et 
$X_\xi=\coprod_i (\overline{U_i})_\xi$. 
\item Supposons que la valuation de $F$ est de hauteur finie. Il existe alors un recouvrement ouvert fini $(X_i)$ de $X$ tel que pour tout $\xi\in \spec F^\circ$ la fibre $X_{i,\xi}$ est ou bien vide, ou bien une composante connexe de $X_\xi$. 
\end{enumerate}
\end{prop}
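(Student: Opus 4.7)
The plan is as follows. For part (1), the crucial tool is the weak normality given by theorem \ref{theo-apres-temkin}(1). I first reduce to the case where $\eta$ is the generic point of $\spec F^\circ$ by replacing $X$ with $Y := X\otimes_{F^\circ}(F^\circ/\mathfrak p_\eta)$, where $\mathfrak p_\eta$ is the prime corresponding to $\eta$: the quotient $F^\circ/\mathfrak p_\eta$ is again the annéloïde of a valuation on its corpoïde des fractions, $Y$ is a flat $(F^\circ/\mathfrak p_\eta)$-schémoïde of finite présentation with reduced fibers, its generic fiber is $X_\eta$, and since $\xi\in\overline{\{\eta\}}$ the point $\xi$ lifts to $\spec F^\circ/\mathfrak p_\eta$; as $Y$ is closed in $X$ and contains $X_\eta$, closures of subsets of $X_\eta$ in $X$ or in $Y$ agree, as do their fibers over $\xi$.

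Assuming then that $\eta$ is generic, theorem \ref{theo-apres-temkin}(1) asserts that $X$ is weakly normal. On any affine open $\spec B\subseteq X$, the trace on $\spec B_F$ of the decomposition $X_\eta=\coprod U_i$ corresponds to a family of orthogonal idempotents of $B_F$ summing to $1$; each such idempotent $e$ satisfies $e^2=e$, so is integral over $B$, hence lies in $B$ by weak normality. The resulting local decompositions are compatible (both being determined by the decomposition of $X_\eta$) and glue to a global decomposition $X=\coprod V_i$ into clopen subschémoïdes. Each $V_i$ is $F^\circ$-flat with generic fiber $U_i$, so it coincides with the scheme-theoretic closure $\overline{U_i}$ of $U_i$ in $X$. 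Taking fibers over $\xi$ yields the announced decomposition $X_\xi=\coprod(\overline{U_i})_\xi$ into disjoint clopens.

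For part (2), I proceed by induction on the height $n$ of the valuation of $F^\circ$. If $n=0$ then $\spec F^\circ$ is a single point and it suffices to take the (finitely many) connected components of $X$. Otherwise let $\xi_0$ be the generic point of $\spec F^\circ$ and $\mathfrak p_1$ the minimal nonzero prime, corresponding to $\xi_1$. Part (1) furnishes a decomposition $X=\coprod_i V_i$ into clopens whose fibers at $\xi_0$ are the connected components $U_i$ of $X_{\xi_0}$ (finite in number since $X_{\xi_0}$ is of finite type over a corpoïde). The closed subschémoïde $V_i^{(1)}:=V_i\otimes_{F^\circ}F^\circ/\mathfrak p_1$ is flat of finite présentation over $F^\circ/\mathfrak p_1$, a valuation ring of height $n-1$, with reduced fibers; by induction it admits a finite open cover $(W_{ij})_j$ such that each $(W_{ij})_{\xi_k}$, for $k\geq 1$, is empty or a connected component of $(V_i^{(1)})_{\xi_k}$.

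I then set $\widetilde{W_{ij}}:=V_i\setminus(V_i^{(1)}\setminus W_{ij})$. Since $V_i^{(1)}\setminus W_{ij}$ is closed in $V_i^{(1)}$ which is closed in $V_i$, this is an open of $V_i$, hence of $X$; as a set it equals $U_i\cup W_{ij}$, because $V_i\setminus V_i^{(1)}=(V_i)_{\xi_0}=U_i$. Its fiber over $\xi_0$ is thus $U_i$, a connected component of $X_{\xi_0}$, and for $k\geq 1$ its fiber over $\xi_k$ is $(W_{ij})_{\xi_k}$, which by induction is empty or a connected component of $(V_i)_{\xi_k}$; this latter set is itself a clopen of $X_{\xi_k}$ by part (1), so a connected component of it is one of $X_{\xi_k}$. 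The finitely many $\widetilde{W_{ij}}$ cover $X$ because $\bigcup_j\widetilde{W_{ij}}=U_i\cup V_i^{(1)}=V_i$ and the $V_i$ cover $X$. The main delicate point is the definition of $\widetilde{W_{ij}}$: it is designed precisely so that the fiber over $\xi_0$ is a full connected component of $X_{\xi_0}$ rather than a proper open subset of $U_i$, while extending an element of the inductive cover on $V_i^{(1)}$.
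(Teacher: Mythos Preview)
Your proof of part (1) is correct and essentially identical to the paper's: both reduce to $\eta$ generic and then lift the idempotents of the decomposition $X_\eta=\coprod U_i$ to $X$ via weak normality (théorème \ref{theo-apres-temkin}(1)). The paper phrases this as extending a single characteristic function $f$ satisfying $f^2=f$ and deducing pairwise disjointness of the $\overline{U_i}$, while you globalize the full system of idempotents to obtain $X=\coprod V_i$ and identify $V_i=\overline{U_i}$; these are the same argument.

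For part (2) your argument is correct but takes a genuinely different route. The paper proceeds pointwise: for $x\in X_\xi$ it sets $U=\bigcup_\eta U_\eta$, where $U_\eta$ is the unique connected component of $X_\eta$ (for $\eta$ generalizing $\xi$) whose closure contains the component of $x$; it observes that $U$ is constructible and then, using part (1), checks that $U$ is stable under generalization, hence open, and concludes by quasi-compactness of $X$. Your approach is instead an induction on the height: you peel off the generic fiber via the clopen decomposition $X=\coprod V_i$ coming from part (1), apply the inductive hypothesis to each $V_i^{(1)}=V_i\otimes_{F^\circ}F^\circ/\mathfrak p_1$ over the height $n-1$ quotient, and extend the resulting opens back across the generic fiber by $\widetilde{W_{ij}}=V_i\setminus(V_i^{(1)}\setminus W_{ij})=U_i\cup W_{ij}$. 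Your construction is more explicitly algorithmic and makes the layered structure of the cover transparent; the paper's is more direct and avoids the inductive bookkeeping, at the cost of a slightly more delicate verification that the candidate set is stable under generalization.
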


\begin{proof}
Montrons
tout d'abord (1). Quitte à remplacer $F^\circ$ par $F^\circ/\mathfrak p$, où $\mathfrak p$ est l'idéal premier
de $F^\circ$ correspondant à $\eta$,
on peut supposer que $\eta$ est le point générique de $\spec F^\circ$.
Les $(\overline{U_i})_\xi$ sont des fermés de $X_\xi$ par définition, et
ils recouvrent $X_\xi$ car $X\to \spec F^\circ$ est plat et \emph{a fortiori}
générisant. Il suffit pour conclure de prouver que les $(\overline{U_i})_\xi$ 
sont deux à deux disjoints. Fixons donc deux indices $i$ et $j$ différents. 
Soit $f$ la fonction sur $X_\eta$ telle que $f|_{U_i}=1$ et $f|_{X_F\setminus U_i}=0$. La fonction $f$ annule le polynôme $T^2-T$
appartenant à $F[T]$
ce qui entraîne, puisque $X$ est faiblement normal sur $F^\circ$ en vertu du théorème \ref{theo-apres-temkin}, 
que $f$ se prolonge en une fonction $\overline f$ sur $X$.
On a alors
$\overline f(x)=1$ pour tout $x\in \overline U_i$ et $\overline f(x)=0$ pour tout $x\in \overline{X\setminus U_i}$, et en particulier pour tout 
$x\in \overline {U_j}$. En conséquence $\overline {U_i}\cap \overline {U_j}=\emptyset$.  

Montrons maintenant (2).
On suppose donc
la valuation de $F$ de hauteur finie. Soit $\xi \in \spec F^\circ$ et soit $x\in X_\xi$. 
Pour toute générisation $\eta$ de $\xi$ sur $\spec F^\circ$,
il existe d'après (1) une unique composante connexe $U_\eta$ de $X_\eta$ dont l'adhérence rencontre la composante connexe de $x$ dans $X_\xi$
(et $\overline U_\eta$ \emph{contient} en fait cette composante). 
Notons $U$ la réunion des $U_\eta$ où $\eta$ parcourt l'ensemble des générisations de $\xi$ dans $\spec F^\circ$. C'est une partie
constructible de $X$. Nous allons montrer que $U$ est ouverte, ce qui permettra de conclure puisque $X$ est quasi-compact, puisque $x\in U$ et puisque toute fibre de $U\to \spec F^\circ$ est par construction une composante connexe d'une fibre de $X\to \spec F^\circ$. 
Pour montrer que $U$ est ouverte, il suffit de s'assurer qu'elle est stable par générisation. Soit donc $\eta$ une générisation de $\xi$ dans $\spec F^\circ$ et soit $y\in U_\eta$ ; soit $z$ une générisation de $y$ sur $X$, dont on note $\zeta$ l'image sur $\spec F^\circ$. Soit $\Omega$ la composante connexe de $z$ dans $X_\zeta$.  Il résulte de (1) que $\overline \Omega_\eta$ est une réunion de composantes connexes de $X_\eta$. Comme $y$ est adhérent à $z$, il s'ensuit que  $\overline \Omega_\eta$ contient $U_\eta$, ce qui entraîne que $\overline \Omega_\xi$ contient $U_\xi$ ; il vient $\Omega=U_\zeta$, puis $z\in U$.
\end{proof}

\section{Extension de la théorie de Grauert
et Remmert}
\label{s-grg}

\subsection{Conventions générales}\label{ss-conv-gen}
Dans cette section les valuations et normes sont sauf mention
expresse du contraire à valeurs réelles. 
Un \emph{polyrayon}
est une famille finie de réels strictement positifs. 

Nous utiliserons librement la géométrie analytique au sens de Berkovich (\cite{berkovich1990}, \cite{berkovich1993}).
En ce qui concerne les notions d'espace réduit ou géométriquement réduit dans ce contexte, nous renvoyons le lecteur au chapitre 2 de \cite{ducros2018}, et plus particulièrement au paragraphe 2.6.9. 

\subsubsection{}
On fixe pour toute la suite
un corps ultramétrique complet $k$.
Un polyrayon est dit \emph{$k$-libre}
si c'est une famille libre du $\Q$-espace
vectoriel $\R_+\gpm/\abs{k\gpm}^\Q$. Si $r$ est un polyrayon $k$-libre nous noterons $k_r$ le corps $k\{T/r, rT\inv\}$. 

\subsubsection{}
On fixe également un sous-groupe $\Gamma$ de $\R_+\gpm$
tel que $\Gamma\cdot \abs{k\gpm}\neq \{1\}$ (autrement dit, $\Gamma$
est non trivial si la valeur absolue de $k$ est triviale). Nous allons faire quelques brefs rappels sur la théorie des espaces $k$-analytiques $\Gamma$-stricts ; le lecteur trouvera
davantage de détails et des justifications de nos assertions à la section 
3.1 de \cite{ducros2018}. 

Une algèbre $k$-affinoïde $A$ est dite \emph{$\Gamma$-stricte} si elle peut s'écrire
comme un quotient de $k\{T_1/r_1,\ldots, T_n/r_n\}$ pour une 
certaine famille $(r_1,\ldots,r_n)$  d'éléments de $\Gamma$, et il suffit pour que ce soit le cas que $A$
puisse s'écrire comme un quotient de de $k\{T_1/r_1,\ldots, T_n/r_n\}$ 
où les $r_i$ appartiennent à $(\abs k\gpm\cdot \Gamma)^\Q$.
Une algèbre $k$-affinoïde est $\Gamma$-stricte si et seulement si sa semi-norme spectrale prend ses valeurs dans $(\abs k\gpm\cdot \Gamma)^\Q\cup\{0\}$.

Un espace $k$-affinoïde est dit $\Gamma$-strict si son algèbre de fonctions analytiques
est $\Gamma$-stricte. Un espace $k$-analytique $X$ est dit $\Gamma$-strict s'il possède un G-recouvrement par des domaines affinoïdes $\Gamma$-stricts et si l'intersection de deux domaines affinoïdes $\Gamma$-stricts de $X$ est
G-recouverte par des domaines affinoïdes $\Gamma$-stricts (cette définition est compatible avec la précédente lorsque $X$ est affinoïde). Tout point d'un bon espace $k$-analytique $\Gamma$-strict possède une base de voisinages affinoïdes $\Gamma$-stricts
(c'est pour assurer cette dernière propriété que nous imposons la condition $\Gamma\cdot \abs{k\gpm}
\neq \{1\}$). 

Notons deux cas particuliers extrêmes : si $\Gamma=\R_+\gpm$ tout espace $k$-analytique est $\Gamma$-strict (le lecteur qui souhaite ignorer la question des paramètres réels de définition des espaces analytiques peut donc supposer $\Gamma=\R_+\gpm$) ; si $\Gamma=\{1\}$ (ce qui n'est possible que si
$k$ n'est pas trivialement valué), les espaces $k$-analytiques $\Gamma$-stricts sont les espaces strictement $k$-analytiques.

\subsubsection{}\label{def-annresiduel-gamma}
Si $A$ est un anneau muni d'une semi-norme sous-multiplicative
$\norm \cdot$ 
nous noterons $\widetilde A$ le $\Gamma$-annéloïde 
résiduel de $A$, c'est-à-dire 
\[\coprod_{\gamma \in \Gamma}\{a\in A, \norm a\leq \gamma\}
/\{a\in A, \norm  a <\gamma\}.\]
Si $a$ est un élément de $A$ et si $s$ est
un élément de $\Gamma$
supérieur ou égal à $\norm a$ nous noterons $\widetilde a^s$ l'image de $a$ dans $\widetilde A^s$ ; si $\norm a\in \Gamma$
nous écrirons $\widetilde a$ au lieu de $\widetilde a^{\norm a}$.

En particulier, nous noterons $\widetilde k$
le $\Gamma$-corpoïde résiduel de $k$, c'est-à-dire 
 \[\coprod_{\gamma \in \Gamma}\{x\in k, \abs x\leq \gamma\}
/\{x\in k, \abs x <\gamma\}.\] Si  $\Gamma=\R_+\gpm$ c'est le corpoïde résiduel
considéré par Temkin ; si $\Gamma=\{1\}$ c'est
son corps résiduel au sens classique. 

Si $A$ est une algèbre
$k$-affinoïde la notation
$\widetilde A$
désignera toujours
le $\Gamma$-annéloïde résiduel
de $A$ \emph{relatif à sa semi-norme spectrale} ; nous n'utiliserons cet objet que lorsque $A$ est $\Gamma$-stricte, car il est essentiellement inexploitable sinon. Notons que comme la semi-norme
spectrale commute aux puissances,
$\widetilde A$ est réduit. Si $X$ désigne l'espace $\mathscr M(A)$, 
le schémoïde $\spec \widetilde A$ sera noté $\widetilde X$. On dispose d'une application de spécialisation de $X$ vers $\widetilde X$
qui est anti-continue et surjective. La dimension de $\widetilde X$ est égale à celle de $X$ -- pour un énoncé un peu plus précis dont nous aurons besoin,
voir le lemme \ref{lem-dim-xtilde} en appendice.

\subsection{}Un $k$-espace de Banach est un $k$-espace vectoriel $M$
muni d'une norme ultramétique $\norm\cdot $ telle que $\norm {\lambda m}
=\abs \lambda \cdot \norm m$ pour tout $(\lambda, m)\in k\times M$, et pour laquelle il
est complet. 
Une $k$-algèbre de Banach est une $k$-algèbre $A$ munie d'une norme
sous-multiplicative qui en fait un $k$-espace de Banach. En pratique les seules $k$-algèbres de Banach auxquelles nous aurons affaire seront
des 
algèbres affinoïdes, et un morphisme entre deux telles algèbres
sera par définition un morphisme \emph{borné}
de $k$-algèbres.

Un module de Banach sur une $k$-algèbre de Banach $A$
est un $A$-module $M$ muni d'une norme $\norm \cdot$
qui en fait un $k$-espace de Banach et est telle que $\norm {am}\leq \norm a \cdot \norm m$ pour 
tout
$(a,m)\in A\times M$.

\subsection{}Soit $A$ une $k$-algèbre de Banach.

\subsubsection{}
Soit $r$ un réel $>0$. On notera $A(r)$ le $A$-module
de Banach dont le $A$-module sous-jacent est $A$
et dont la norme est $a \mapsto r\norm a$. 

Si $M$ est un $A$-module  Banach,
le produit tensoriel $M\hotimes_A A(r)$
a pour $A$-module
sous-jacent le module $M$
et pour norme l'application $m\mapsto r\|m\|$. Nous le noterons $M(r)$. 

\subsubsection{}
Si $(M_i)$ est une famille de $A$-modules de Banach, la \emph{somme directe complétée} $\widehat{\bigoplus M_i}$
est l'ensemble des familles $(m_i)\in \prod M_i$ telles que pour tout $\epsilon>0$, 
l'ensemble des indices $i$ tels que $\norm {m_i}\geq \epsilon$ soit fini
; notons que la famille $I$ peut être de cardinal quelconque, mais qu'un élément donné
de $\widehat{\bigoplus M_i}$ n'a qu'un nombre (au plus) dénombrable de composantes non nulles. On munit cette somme directe complétée
de la norme $(m_i)\mapsto \max \|m_i\|$, qui en fait un $A$-module de  Banach.
Si la famille $(M_i)$ est finie, 
on le notera simplement $\bigoplus M_i$.

\subsubsection{}\label{def-fortgen}
Soit $M$ un $A$-module de Banach. Nous dirons qu'une famille finie $(m_i)$
d'éléments de $M$ est \emph{fortement génératrice} si les $m_i$ sont tous non nuls et si
pour tout $m\in M$ il existe une famille finie $(a_i)$ d'éléments de $A$
telle que $m=\sum a_i m_i$ et $\norm m=\max \;\norm {a_i}\cdot \norm {m_i}$ 
(notons que pour avoir cette dernière égalité, il suffit que  $\norm m\geq \max\; \norm {a_i}\cdot \norm {m_i}$
car l'autre inégalité est automatique). 

Soit $(m_i)$ une famille finie fortement génératrice de $M$. Soit $(n_i)$ une famille d'éléments
de $M$ tels que $\|n_i-m_i\|<\|m_i\|$ pour tout $i$. La famille $(n_i)$ est alors
fortement génératrice. En effet, soit $m\in M$. Puisque $(m_i)$ est fortement génératrice, il existe une famille $(a_i)$ d'éléments de $A$ tels que
$m=\sum a_i m_i$ et $\|m\|=\max \|a_i\|\cdot \|m_i\|$. En posant $\mu_i=m_i-n_i$ pour tout $i$, on a
$m=\sum a_i n_i+\sum a_i\mu_i$. Soit $\epsilon$ le maximum des $\|\mu_i\|\cdot \|m_i\|\inv$ ; on a $\epsilon<1$
et \[\|\sum a_i\mu_i\|\leq \max \|a_i\|\cdot \|\mu_i\|\leq \epsilon \max \| a_i\|\cdot \|m_i\|=\epsilon \| m\|.\]
On a donc écrit
$m=\sum a_i n_i+\mu$ avec $\norm \mu \leq \epsilon\norm m$. En réitérant le processus et en utilisant la complétude de $A$ on obtient
une écriture de $m$ sous la forme $\sum \alpha_i n_i$ avec $m=\max \|\alpha_i\|\cdot \|m_i\|=\max \|\alpha_i\|\cdot \| n_i\|$. 

\subsubsection{}
Soit $M$ un $A$-module de Banach et soit $N$ un sous-module
de $M$. Nous dirons que $N$
est \emph{strictement fermé}
dans $M$ si la distance à $N$ de tout  élément de $M$ est atteinte. La terminologie est consistante : si $N$ est strictement 
fermé il est fermé (un élément de $M$ à distance nulle de $N$ appartiendra en effet alors à $N$) ; c'est donc
un $A$-module de Banach. 

\subsubsection{}\label{def-fortadm}
Soit $\phi \colon N\to M$ une application $A$-linéaire bornée entre $A$-modules de Banach. Rappelons que $\phi$ est dite
\emph{admissible}
si la norme quotient sur $\phi(N)$ est équivalente à la norme induite par celle de $M$. Nous dirons que $\phi$ est \emph{fortement}
admissible si tout élément $m$ de $\phi(N)$ possède un antécédent $n$ par $\phi$ tel que $\norm n=\norm m$. Cela revient à demander 
que la norme quotient sur $\phi(N)$ soit \emph{égale}
à la norme induite par celle de $M$ et qu'elle soit de surcroît atteinte dans chaque fibre ; cette dernière condition
équivaut au caractère strictement fermé de $\mathrm{Ker}(\phi)$ dans $N$. 

Soit $(m_i)$ une famille finie d'éléments non nuls de $M$. Il résulte des définitions que $(m_i)$ est fortement
génératrice si et seulement si l'application 
\[\bigoplus_i A(\norm{m_i})\to M, \;(a_i)\mapsto \sum a_i m_i\]
est une surjection fortement admissible. 

\begin{prop}\label{prop-fort-gen}
Soit $r=(r_1,\ldots, r_n)$ un polyrayon, soit $A$ l'algèbre $k$-affinoïde $k\{T/r\}$,
soit $(\rho_j)$ une famille finie de réels strictement positifs
et soit $M$ le $A$-module $\bigoplus A(\rho_j)$. Soit $N$ un sous-$A$-module
de $M$.  Le sous-module $N$ de $M$ est strictement fermé
et il
possède
une famille finie fortement génératrice.
\end{prop}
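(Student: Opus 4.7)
Le plan est de se ramener au théorème classique de Bosch-Güntzer-Remmert pour les algèbres de Tate strictes, via une extension des scalaires à $\hr{\eta_s}$ pour un polyrayon $s$ convenable, puis de redescendre en exploitant la décomposition orthogonale de Schauder de $\hr{\eta_s}$ sur $k$.

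Comme $A = k\{T/r\}$ est noethérienne, $N$ est automatiquement de type fini. D'après l'observation faite à la fin de \ref{def-fortadm}, exhiber une famille finie fortement génératrice $(m_i)$ de $N$ revient à construire une surjection fortement admissible $\bigoplus_i A(\norm{m_i}) \twoheadrightarrow N$, ce qui entraîne automatiquement que $N$ est strictement fermé dans $M$ ; il suffit donc de produire une telle famille. Je choisis un polyrayon $k$-libre $s$ tel que tous les $r_i$ et $\rho_j$ appartiennent à $\abs{K\gpm}^\Q$, où $K := \hr{\eta_s}$. Après éventuellement une extension finie modérée de $K$ (conservant l'orthogonalité sur $k$) et une renormalisation des $T_i$ par des scalaires de norme appropriée, $A_K = K\{T/r\}$ devient une algèbre de Tate stricte et $M_K = \bigoplus_j A_K(\rho_j)$ un module libre standard sur $A_K$. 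Par le lemme \ref{lem-kr-schauder}, $K$ se décompose en somme directe complétée orthogonale $\widehat{\bigoplus_\lambda} k \cdot e_\lambda$ comme $k$-espace de Banach, l'indice $\lambda_0$ correspondant à $e_{\lambda_0} = 1$. Le théorème classique de Bosch-Güntzer-Remmert (\cite{bosch-g-r1984}, 5.2.7), appliqué au sous-module de type fini $N_K := \sum_i A_K \cdot n_i \subset M_K$ où $(n_1, \ldots, n_r)$ est une famille génératrice de $N$ sur $A$, fournit alors une famille finie fortement génératrice $(\mu_i)$ de $N_K$ sur $A_K$.

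Pour descendre, je développe chaque $\mu_i = \sum_\lambda \mu_{i,\lambda} \otimes e_\lambda$ dans la base de Schauder, avec $\mu_{i,\lambda} \in M$ et $\norm{\mu_i}_{M_K} = \sup_\lambda \norm{\mu_{i,\lambda}}_M \cdot \norm{e_\lambda}$. Puisque tout élément de $N_K$ s'écrit comme combinaison $A_K$-linéaire finie des $n_j$, le développement orthogonal des coefficients montre que les composantes de Schauder d'un tel élément se trouvent dans $N$ ; en particulier $N_K \cap M = N$ et chaque $\mu_{i,\lambda}$ est dans $N$. Une troncature de chaque $\mu_i$ à une somme de Schauder finie préserve la forte génération de $N_K$ sur $A_K$ d'après \ref{def-fortgen}, et la collection finie qui en résulte $(\mu_{i,\lambda})$ est fortement génératrice de $N$ sur $A$ : si $m \in N$ admet l'expression $m = \sum_i a_i \mu_i$ réalisant la norme dans $A_K$, alors en développant $a_i = \sum_\nu a_{i,\nu} e_\nu$ orthogonalement avec $a_{i,\nu} \in A$, en multipliant via les constantes de structure $e_\nu e_\lambda = \sum_\mu c_{\nu,\lambda,\mu} e_\mu$ de $K$, puis en projetant sur la composante correspondant à $e_{\lambda_0} = 1$, on obtient une expression $A$-linéaire $m = \sum b_{i,\lambda} \mu_{i,\lambda}$ ; l'égalité de norme voulue se déduit de l'inégalité orthogonale $\abs{c_{\nu,\lambda,\lambda_0}} \leq \norm{e_\nu} \cdot \norm{e_\lambda}$, valide puisque $\norm{e_{\lambda_0}} = 1$.

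La difficulté principale sera précisément cette dernière étape : contrôler la convergence des projections de Schauder des $a_i$ dans $A$ et vérifier que l'inégalité de norme obtenue est bien une égalité ; les deux points reposent essentiellement sur l'orthogonalité fournie par \ref{lem-kr-schauder} et sur la compatibilité entre la structure d'algèbre de $K$ et sa base de Schauder.
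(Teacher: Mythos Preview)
Votre approche est essentiellement la même que celle du papier : étendre les scalaires à $k_s$ pour un polyrayon $k$-libre $s$, appliquer le résultat classique (le papier cite Gruson \cite{gruson1968} plutôt que BGR, et travaille avec $k_s\{S\}$ pour des variables renormalisées $S_i = T_i^{n_i}/\lambda_i$ au lieu de prendre une extension finie de $K$, ce qui évite votre complication inutile), puis redescendre via la base de Schauder monomiale $(\Theta^I)_I$.

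Il y a toutefois une vraie lacune : votre affirmation qu'une famille fortement génératrice de $N$ \og entraîne automatiquement que $N$ est strictement fermé dans $M$\fg{} n'est pas justifiée, et c'est faux comme principe général. Avoir une surjection fortement admissible $\bigoplus_i A(\norm{m_i}) \twoheadrightarrow N$ dit quelque chose sur la structure \emph{interne} de $N$ (tout élément de $N$ se relève avec contrôle de norme) ; cela ne dit rien sur la distance d'un élément arbitraire $m \in M$ à $N$. Le papier établit les deux conclusions séparément : la clôture stricte de $N$ dans $M$ se descend directement de celle de $N_s$ dans $M_s$ (si $\sum n_I \Theta^I \in N_s$ réalise la distance de $m$ à $N_s$, alors le coefficient constant $n_0 \in N$ réalise la distance de $m$ à $N$, par orthogonalité). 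C'est un argument de descente du même type que celui que vous faites pour la forte génération, mais c'en est un distinct ; il faut l'inclure.

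Remarque mineure : pour $s$ $k$-libre, la base de Schauder de $k_s$ est monomiale et les constantes de structure sont triviales ($\Theta^I \cdot \Theta^J = \Theta^{I+J}$), donc votre $b_{i,\lambda}$ se réduit à un seul coefficient de Schauder $a_{i,-\lambda}$ et il n'y a aucune question de convergence. Votre passage par une \og extension finie modérée de $K$\fg{} est inutile (et rendrait les constantes de structure non triviales, donc compliquerait la descente) ; mieux vaut suivre le papier et renormaliser les $T_i$ dans $k_s$ pour rendre $A_{k_s}$ libre sur une algèbre de Tate stricte.
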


\begin{proof}
Soit $s$ un polyrayon $k$-libre tel que $\abs{k_s\gpm}\neq \{1\}$ et tel que les $r_i$ soient tous de torsion 
modulo $\abs{k_s\gpm}$ ; soit $\Theta$ la famille d'indéterminées canonique de $k_s$. 

Pour tout $i$, on note $n_i$ l'ordre de $r_i$
modulo $\abs{k_s\gpm}$, on choisit un élément $\lambda_i$ de $k_s$
tel que $\abs{\lambda_i}=r_i^{n_i}$, et l'on pose $S_i=T_i^{n_i}/\lambda_i$. 
Notons $E$ l'ensemble des $n$-uplets $(e_1,\ldots, e_n)$ tels
que $0\leq e_i<n_i$ pour tout $i$. 
L'anneau $k_s\{T/r\}$ alors un $k_s\{S\}$-module libre de base $(T^e)_{e\in E}$, 
et l'on a de plus 
\[\left\|\sum_{e\in E} \lambda_e T^e\right\|=\max_e \|\lambda_e\|\cdot r^e\]
pour toute famille $(\lambda_e)$ d'éléments de $k_s\{S\}$. Il s'ensuit que $M_s$ est isomorphe
en tant que $k_s\{S\}$-module à $\bigoplus_{j,e}k_s\{S\}(r^e\rho_j)$. 

Il résulte alors de 
\cite{gruson1968}, III, \S 1, cor. de la prop. 1,  que tout sous-$k_s\{S\}$-module de 
$M$ est strictement fermé dans $M$ et possède une 
partie finie fortement génératrice. En particulier, $A_sN$ est strictement fermé dans $M_s$ et possède
une partie finie fortement génératrice. Indiquons quelques conséquences de ces faits : 

\begin{itemize}[label=$\bullet$]
\item $A_sN$ est complet, et partant égal au sous-module $N_s$ de $M_s$ constitué des séries en $\Theta$ dont chaque coefficient appartient à $N$ ; 
\item $N$ est de type fini (si $(\sum n_{I,j}\Theta^I)_j$ est une famille génératrice finie de $N_s$ alors $(n_{0,j})_j$ est
une partie génératrice finie de $N$) ; 
\item $N$ est strictement fermé dans $M$ (si $m\in M$ et si $\sum n_I\Theta^I$ réalise la distance de $m$ à $N_s$
alors $n_0$ réalise la distance de $m$ à $N$). 
\end{itemize}

Soit $(x_1,\ldots, x_\ell)$ une partie finie fortement génératrice du $k_s\{S\}$-module $N_s$. Il
résulte immédiatement des
définitions que cette famille est \emph{a fortiori}
fortement génératrice pour $N_s$ vu comme $A_s$-module.

Pour tout $i$,
posons $\sigma_i=\|x_i\|$. Écrivons
$x_i=\sum_Jx_{i,J}\Theta^J$ (où les $x_{i,J}$ appartiennent à $N$)
et notons $\mathscr J_i$ l'ensemble (fini et non vide) des indices $J$
tels que $\|x_{i,J}\|s^J=\sigma_i$ ; posons $y_i=\sum_{J\notin \mathscr J_i} x_{i,J}\Theta^J$.

Soit $n$ appartenant à $N$.
Il suffit de prouver 
qu'on peut écrire 
\[n=\sum_{1\leq i\leq \ell, J\in \mathscr J_i}a_{i,J}x_{i,J}\]
où les $a_{i,J}$ appartiennent à $A$ et vérifient les inégalités $\|a_{i,J}\|s^{(-J)}\sigma_i\leq \|n\|
$. C'est évident si $n=0$ ; supposons désormais $n\neq 0$ (ceci entraîne que $N\neq \{0\}$ 
et donc que $\ell>0$). 
Comme $(x_1,\ldots, x_\ell)$ engendre fortement $N_s$
on peut écrire $n=\sum b_ix_i$ où les $b_i$ sont des éléments
de $A_s$ tels que $\|b_i\|\sigma_i\leq \|n\|$ pour tout $i$. Écrivons $b_i=\sum_J b_{i,J}\Theta^J$ ; on a $\|b_{i,J}\|s^J\sigma_i\leq
\|n\|$ pour tout $(i,J)$.
Comme $n$ appartient à $N$ (et donc est constante lorsqu'on la voit
comme série en la multivariable $\Theta$) on a 
\[n=\sum_{i,J}b_{i,-J}x_{i,J}=\sum_{i,J\in \mathscr J_i}b_{i,-J}x_{i,J}
+\sum_{i,J\notin \mathscr J_i}b_{i,-J}x_{i,J}.\]
Soit $(i,J)$ un couple d'indices. On a $\|b_{i,-J}\|s^{-J}\sigma_i\leq\|n\|$ et si 
de plus $J\notin \mathscr J_i$ alors
\[\|b_{i,-J}\|s^J \|x_{i,J}\|
\leq \|b_{i,-J}\|\cdot \|y_i\|
\leq s^J\|n\|\sigma_i\inv \|y_i\|,\]
d'où finalement la majoration
\[\|b_{i,-J}\|\cdot
\|x_{i,J}\|
\leq\|n\| \sigma_i\inv \|y_i\|\;.\]
Posons $\epsilon=\max_i \sigma_i\inv \|y_i\|$. On a $0\leq \epsilon<1$ et $n$
s'écrit par ce qui précède \[\sum_{i,J\in \mathscr J_i}b_{i,(-J)}x_{i,J}+\nu\]
avec $\|b_{i,-J}\|s^{-J}\sigma_i\leq\|n\|$ pour tout $(i,J)$ et $\|\nu\| \leq \epsilon \|n\|$. En réitérant le processus et en utilisant
la complétude de $A$ on obtient une écriture de $n$ de la forme \[n=\sum_{1\leq i\leq m, J\in \mathscr J_i}c_{i,J}x_{i,J}\]
où les $c_{i,J}$ appartiennent à $A$ et vérifient les inégalités $\|c_{i,J}\|s^{-J}\sigma_i\leq \|n\|$.
\end{proof}

\begin{prop}\label{prop-fort-adm}
Soient $r$ et $\rho=(\rho_i)$ deux polyrayons, soit $A$ l'algèbre
$k$-affinoïde $k\{T/r\}$ et soit $P$ le $A$-module
de Banach
fini $\bigoplus A(\rho_i)$.  Soit $\phi$ 
un morphisme de $P$ vers un $A$-module
de Banach fini $N$. Soit
$L$ une extension complète de $k$. Les assertions suivantes sont équivalentes : 
\begin{enumerate}[i]
\item $\phi$ est une surjection fortement admissible ;  

\item $\phi_L$ est une surjection fortement admissible.
\end{enumerate}
\end{prop}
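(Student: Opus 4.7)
Both directions rest on Proposition~\ref{prop-fort-gen}, which tells us that the kernel $K=\ker\phi\subset P$ is strictly closed in $P$ and admits a finite strongly generating family $(k_1,\ldots,k_s)$. Equivalently, one has a presentation
\[
\bigoplus_{i=1}^s A(\|k_i\|)\xrightarrow{\psi} P\xrightarrow{\phi} N,
\]
with $\psi$ a strongly admissible surjection onto $K$. The same proposition, applied over $A_L=L\{T/r\}$, controls subobjects of $P_L=\bigoplus A_L(\rho_i)$ in an entirely analogous way.

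The implication (i)$\Rightarrow$(ii) is the easier direction. Given strong admissibility of $\phi$, the exact sequence above has both arrows strongly admissible onto their images. Base-changing coordinate-wise to $L$ yields
\[
\bigoplus_{i=1}^s A_L(\|k_i\|)\xrightarrow{\psi_L} P_L\xrightarrow{\phi_L} N_L,
\]
and since the norm on $A_L=L\{T/r\}$ is computed via suprema on coefficients as in the proof of Proposition~\ref{prop-fort-gen}, the family $(k_i\otimes 1)$ remains strongly generating in $\ker\phi_L$, and $\phi_L$ inherits strong admissibility from $\phi$ by a direct coefficient calculation.

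For (ii)$\Rightarrow$(i), I proceed in two steps. First, surjectivity of $\phi$: the cokernel $\mathrm{coker}\,\phi$ is a finite $A$-module and $(\mathrm{coker}\,\phi)\hotimes_A A_L=\mathrm{coker}\,\phi_L=0$ by hypothesis; since $A_L=L\{T/r\}$ is faithfully flat over $A=k\{T/r\}$, we conclude $\mathrm{coker}\,\phi=0$. Second, strong admissibility of $\phi$: by Proposition~\ref{prop-fort-gen}, $K$ is strictly closed in $P$, so $\mathrm{dist}(p,K)$ is attained for every $p\in P$. The inequality $\|\phi(p)\|_N\leq \mathrm{dist}(p,K)$ holds automatically, and it remains to prove the reverse. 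The isometric inclusions $P\hookrightarrow P_L$ (immediate from $P=\bigoplus A(\rho_i)$) and $N\hookrightarrow N_L$ (obtained by applying Proposition~\ref{prop-fort-gen} to a strongly admissible presentation of $N$) combined with hypothesis (ii) yield
\[
\|\phi(p)\|_N=\|\phi(p)\otimes 1\|_{N_L}=\mathrm{dist}(p\otimes 1,\ker\phi_L).
\]
One then matches this distance in $P_L$ with the distance in $P$ to $K$: using the strongly generating family of $K$ and the parallel generating family of $\ker\phi_L$ produced by Proposition~\ref{prop-fort-gen} over $L$, a direct coordinate-wise estimate (picking out the $k$-component of an optimal approximation of $p\otimes 1$ in $\ker\phi_L$) furnishes a $k\in K$ with $\|p-k\|_P\leq\|\phi(p)\|_N$, completing the argument.

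The main obstacle is this final descent of distances, i.e., showing that $\mathrm{dist}(p,K)\leq\mathrm{dist}(p\otimes 1,\ker\phi_L)$. This rests on the fact that the base-changed exact sequence $0\to K\hotimes_A A_L\to P_L\to N_L\to 0$ remains strictly exact, so that $\ker\phi_L$ is the closure of $K\otimes_A A_L$ in $P_L$. Granted this (which itself follows from the strong admissibility of $\psi$ transferred to $\psi_L$ via the coordinate description of $P_L$), the norm on $P_L$ expressed through coefficients relative to the generating family of $K$ shows that no element of $\ker\phi_L$ can better approximate $p\otimes 1$ than some $k\otimes 1$ with $k\in K$, which is exactly the required descent.
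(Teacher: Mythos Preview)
Your argument has a genuine gap that shows up in both directions: the repeated appeal to a ``direct coefficient calculation'' and to ``picking out the $k$-component'' of elements of $P_L$ or $\ker\phi_L$. These manoeuvres would work if $L$ admitted an orthogonal Schauder basis over $k$, but a general complete extension does not, and this is exactly the obstacle the proposition is meant to overcome.

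Concretely, in (i)$\Rightarrow$(ii) you assert that the tensor norm on $N_L$ coincides with the quotient norm coming from $P_L$ and that $(k_i\otimes 1)$ remains strongly generating, ``by a direct coefficient calculation''. But the norm on $N_L=N\hat\otimes_k L$ is the completed tensor norm, and identifying it with the quotient norm from $P_L$ is precisely the content of Gruson's theorem on preservation of strict exact sequences under $\hat\otimes_k L$ (\cite{gruson1966}, \S3, Th.~1). There is no elementary coefficient argument here: elements of $N_L$ are limits of finite sums $\sum n_j\otimes\lambda_j$, and you cannot in general separate the $k$-part from the $L$-part. In (ii)$\Rightarrow$(i) the same issue bites twice. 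First, your justification that $N\hookrightarrow N_L$ is isometric invokes ``a strongly admissible presentation of $N$'', but none is given in the hypotheses and producing one is essentially what you are trying to prove; the paper obtains this isometry from Gruson's theorem. Second, the descent step---finding $k\in K$ with $\|p-k\|\leq\|\phi(p)\|$ by extracting the $k$-component of an optimal approximant in $\ker\phi_L$---again presupposes a splitting of $L$ over $k$ that does not exist in general.

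The paper's proof proceeds differently: when $\abs{k\gpm}\neq\{1\}$ it invokes Gruson's theorem for both the norm identifications and the isometry $N\hookrightarrow N_L$, and uses the ultrametric open mapping theorem to get admissibility of $\phi$ from surjectivity; the trivially valued case is then handled by a separate reduction through $k_t$. Your plan does not address the trivially valued case either, and since you avoid the open mapping theorem you might hope to treat both cases uniformly---but without a substitute for Gruson the argument does not go through in either case.
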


\begin{proof}
Plaçons-nous tout d'abord dans le cas où $\abs{k\gpm}\neq \{1\}$. Supposons que
$\phi$ est une surjection fortement admissible.
La surjection $\phi\colon P\to N$ contracte les normes, et
la norme de $N$ coïncide avec la norme quotient associée à $\phi$ ; il
résulte alors de \cite{gruson1966}, \S3, Théorème 1 (1), que $\phi_L$ est une surjection contractant les normes
et
que la
norme de $N_L$ est égale à la norme quotient
associée à $\phi_L$ ; comme $\mathrm{Ker}(\phi_L)$ est strictement fermé dans $P_L$ d'après la
proposition \ref{prop-fort-gen}, $\phi_L$ est fortement admissible. 

Réciproquement, supposons $\phi_L$
fortement admissible. L'application 
$\phi_L$ est alors surjective ; comme $\phi_L$ se déduit de $\phi$ par extension des scalaires
algébrique de $A$ à $A_L$, et comme $A_L$ est fidèlement plat sur $A$, le morphisme $\phi$ est surjectif, et 
il est admissible par la variante ultramétrique du théorème de l'image ouverte (qui ne vaut que lorsque la valuation 
du corps de base n'est pas triviale). 

Comme l'injection $k\hookrightarrow L$ est isométrique, il découle de \cite{gruson1966}, \S3, Th. 1, 
que la flèche $Q\to Q_L$ est isométrique pour tout $k$-espace de Banach $Q$. Il s'ensuit que $\phi$ diminue
les normes, puisque c'est par hypothèse le cas de $\phi_L$. Si l'on note $N'$ le module $N$ muni de la norme quotient, on a donc un diagramme
$P\to N'\hookrightarrow N$ d'applications $k$-linéaires diminuant les normes, la seconde étant
un isomorphisme borné de $k$-espaces de Banach. On en déduit un diagramme
$P_L\to N'_L\hookrightarrow N_L$ dont la seconde flèche est un isomorphisme borné de $L$-espaces de Banach. 

Par définition, la norme de $N'$ est la norme quotient associée à la surjection de $P$ vers $N'$, laquelle contracte les normes ; 
le noyau de cette surjection étant strictement fermé dans $P$ d'après la
proposition \ref{prop-fort-gen},
elle est fortement admissible. 
Il résulte alors
de l'implication déjà établie que $P_L\to N'_L$ est une surjection
fortement admissible. Puisque $\phi_L$ est fortement admissible par hypothèse, $N'_L\to N_L$ est isométrique. Les flèches $N'\to N'_L$ et $N\to N_L$ étant isométriques, $N'\to N$ est isométrique et $\phi$ est fortement admissible.

Il reste à traiter le cas où $k$ est trivialement valué. Soit $t$ un réel strictement positif et soit $L'$ une extension complète de $k$ composée de $L$ et $k_t$ ; notons que si $L$
est trivialement valué on a $L'=L_t$. 

Par un calcul direct, on vérifie que $\phi$ est une surjection fortement admissible si et seulement si c'est le cas de $\phi_t$. Il résulte de ce qui précède
 que $\phi_t$ est une surjection fortement admissible si et seulement si c'est le cas de $\phi_{L'}$. Enfin, $\phi_{L'}$ est une surjection fortement admissible si et seulement si c'est le cas de $\phi_L$, d'après ce qui précède
 si $L$ n'est pas trivialement valué et
par calcul direct sinon car alors $L'=L_t$.  Par conséquent $\phi$ est une  surjection fortement admissible si et seulement si c'est le cas de $\phi_L$. 
\end{proof}

 \begin{defi}
Soit $V$ un $k$-espace de Banach. Une famille $(v_j)_{j\in J}$
d'éléments non nuls de $V$ sera appelée une \emph{base de Schauder
orthogonale} 
si le morphisme
de $k$-espaces
de Banach $\widehat{\bigoplus_i k(\norm{v_j})}\to V$ 
qui envoie $(\lambda_j)$ sur $\sum \lambda_j v_j$ est une isométrie.

Plus concrètement, cela signifie que tout élément $v$ de $V$ a une unique écriture sous la forme 
$\sum \lambda_j v_j$, où $(\lambda_j)$ est une famille de scalaires à support dénombrable dont les valeurs absolues tendent vers zéro, et que $\norm v=\max \abs {\lambda_j}\cdot \norm{v_j}$. 
\end{defi}

\begin{rema}
La définition traditionnelle d'une base de Schauer orthogonale (\emph{cf.} par exemple la définition 6 de \cite{bosch-g-r1984}, \S 2.7.2) requiert que l'ensemble $J$ d'indices soit dénombrable, mais nous nous permettons de la modifier en 
supprimant cette exigence.
\end{rema}

\begin{rema}\label{rem-schauder-incomplet}
Soit $V$ un $k$-espace de Banach
et soit $v$ un élément non  nul de $V$. Si $V$ possède une base de Schauder orthogonale $(v_i)$, il résulte de la proposition 7
de \cite{bosch-g-r1984}, \S 2.7.2 (appliquée  à un sous-espace
complet de $V$ engendré topologiquement  par une famille dénombrable de $v_j$ et contenant $v$)
que $V$ possède une base de Schauder orthogonale contenant $v$. 
\end{rema}

\subsection{}\label{ss-schauder-telesc}
Soit $L$ une extension complète de $k$ et soit $\Lambda$ une extension complète de $L$. Si $(\ell_i)$ est  une base de Schauder orthogonale 
de $L$ sur $k$ et si $(\lambda_j)$ est une base de Schauder orthogonale
de $\Lambda$ sur $L$ alors $(\ell_i\lambda_j)_{i,j}$ est une base de Schauder
orthogonale de $\Lambda$ sur $k$. 

\subsection{}\label{rappel-kr-stable}
Rappelons que si $r$ est un polyrayon, nous désignons par $k(T/r)$ le corps $k(T)$ muni de
la valuation de Gauß de paramètre $r$. Son complété $\widehat{k(T/r)}$, qui n'est autre que
$k_r$ si $r$ est $k$-libre, s'identifie à
$\mathscr H(\eta_r)$ où $\eta_r$ est le point de $\A^{1,\mathrm{an}}_k$ défini précisément par la
norme de Gauß $\sum a_i T^i \mapsto \max \abs{a_i}\cdot r^i$. 

Si $k$ est stable alors $\widehat{k(T/r)}$ est stable : on peut le déduire de la stabilité
de $k(T/r)$ (\cite{kuhlmann2010}) et de la préservation de la stabilité par complétion en hauteur $1$
(\cite{gruson1968}, Remarque \no 3 page 57) ; on pourra aussi se reporter
à la preuve de Michael Temkin (\cite{temkin2010}, Th. 6.3.1). 

\begin{lemm}\label{lem-kr-schauder}
Soit $r$ un polyrayon. Le corps valué complet  $\widehat{k(T/r)}$
possède une base de Schauder orthogonale sur $k$. 
\end{lemm}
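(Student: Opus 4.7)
The plan is to induct on $n$, the length of the polyrayon $r = (r_1, \ldots, r_n)$, reducing the assertion to its one-variable version, which I then treat via a dichotomy based on whether some power of $r$ lies in $\abs{k\gpm}$.

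\emph{Reduction to $n = 1$.} Set $L := \widehat{k(T_1, \ldots, T_{n-1})/(r_1, \ldots, r_{n-1})}$. The Gauss valuation of parameter $r$ on $k(T_1,\ldots,T_n)$ is compatible with the Gauss valuation of parameter $r_n$ on $L(T_n)$ (both are defined by the same formula on polynomials, and $k(T_1,\ldots,T_n)$ is dense in $\widehat{L(T_n/r_n)}$), so $\widehat{k(T/r)} \simeq \widehat{L(T_n/r_n)}$. The inductive hypothesis applied to $k$ and the one-variable case applied to the complete valued field $L$ produce orthogonal Schauder bases of $L$ over $k$ and of $\widehat{L(T_n/r_n)}$ over $L$; these combine via \ref{ss-schauder-telesc} into an orthogonal Schauder basis of $\widehat{k(T/r)}$ over $k$.

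\emph{One variable, $r$ $k$-free.} If $r^m \notin \abs{k\gpm}$ for every $m \geq 1$, then $\widehat{k(T/r)}$ coincides with the one-dimensional Tate algebra $k\{T/r, rT\inv\}$, which is a field in this case since the associated circle reduces to the single point $\eta_r$; the family $(T^i)_{i\in \Z}$ is then manifestly an orthogonal Schauder basis over $k$.

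\emph{One variable, $r^m \in \abs{k\gpm}$.} Let $m \geq 1$ be minimal with $r^m \in \abs{k\gpm}$, pick $\lambda \in k\gpm$ with $\abs \lambda = r^m$, and set $S := T^m/\lambda$, which has Gauss norm $1$. A direct computation shows that the Gauss norm of parameter $r$ on $k(T)$ restricts to the Gauss norm of parameter $1$ on the subfield $k(S)$; by minimality of $m$, the values $1, r, \ldots, r^{m-1}$ lie in distinct cosets of $\abs{k\gpm}$, so $\{1, T, \ldots, T^{m-1}\}$ is orthogonal over $\widehat{k(S/1)}$. Being finite-dimensional, its $\widehat{k(S/1)}$-span is automatically complete and hence equals $\widehat{k(T/r)}$. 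By \ref{ss-schauder-telesc}, it suffices to produce an orthogonal Schauder basis of $E := \widehat{k(S/1)} = \mathscr H(\eta_1)$ over $k$.

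\emph{The field $\mathscr H(\eta_1)$ over $k$, and the main obstacle.} The field $E$ satisfies $\abs{E\gpm} = \abs{k\gpm}$ and has (classical) residue field $\widetilde{k}(\widetilde{T})$, transcendental of degree one. I would choose any $\widetilde{k}$-basis $(\bar e_i)_{i \in I}$ of $\widetilde{k}(\widetilde{T})$ and lift it to a family $(e_i) \subset E^\circ$ with $\widetilde{e_i} = \bar e_i$. Orthogonality of any finite combination $\sum \lambda_i e_i$ is immediate from the $\widetilde{k}$-linear independence of the $\bar e_i$, after rescaling so that $\max \abs{\lambda_i} = 1$. For topological generation—the heart of the proof—given $f \in E^\circ$ one expands $\widetilde f$ as a finite $\widetilde k$-linear combination of the $\bar e_i$, lifts to a $k$-combination $\sum \mu_i e_i$ so that $f - \sum \mu_i e_i$ has strictly smaller norm, and iterates; the completeness of $k$ together with the equality $\abs{E\gpm} = \abs{k\gpm}$ (which makes the renormalization step possible) then produces an absolutely summable expansion $f = \sum \nu_i e_i$. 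The main obstacle lies precisely here: the renormalization at each iteration relies crucially on the value-group equality, without which the successive approximation cannot be carried out. (The degenerate case of trivially valued $k$ with $r = 1$ is handled separately, since then $\widehat{k(T/1)} = k(T)$ with trivial norm and any algebraic $k$-basis of $k(T)$ serves as a Schauder basis.)
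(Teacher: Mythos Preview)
Your reduction to one variable and your handling of the $k$-free case and of the passage from $r$ to $1$ match the paper. The gap is in the final step, for $E=\mathscr H(\eta_1)$, when $\abs{k\gpm}$ is dense in $\R_{>0}$. The iteration you describe does produce at each stage a remainder of strictly smaller norm, but a strictly decreasing sequence in $\abs{k\gpm}$ need not tend to $0$; the assertion that ``completeness of $k$ together with $\abs{E\gpm}=\abs{k\gpm}$ then produces an absolutely summable expansion'' is precisely the unjustified step. Since your argument uses nothing about $E$ beyond the value-group equality and the residue field, if it were valid it would show that \emph{any} lift of \emph{any} $\widetilde k$-basis of $\widetilde E$ is an orthogonal Schauder basis. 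This is false already in $c_0(\N,k)$ with its standard basis $(e_i)$: taking $f_i=e_i+a_ie_{i+1}$ with $\abs{a_i}<1$ but $\prod_i\abs{a_i}$ not tending to $0$ (possible as soon as $\abs{k\gpm}$ is dense), one has $\bar f_i=\bar e_i$, yet a direct computation shows that the distance from $e_0$ to the $k$-linear span of $f_0,\ldots,f_N$ equals $\abs{a_0\cdots a_N}$, which stays bounded away from $0$. Something specific to $\mathscr H(\eta_1)$, not captured by your argument, is therefore required.

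The paper supplies this by exhibiting an explicit candidate basis---the partial-fraction family $(T^i)_{i\geq 0}\cup(T^d/P^m)_{P,d,m}$, where the $P$ lift the monic irreducibles of $\widetilde k^1[T]$---and proving that it spans densely: first for algebraically closed $k$, via a direct Laurent-series decomposition of elements of $\kappa(\eta)$ on affinoid neighbourhoods of $\eta$ (no iteration involved), and then for general $k$ by descending along $k\hookrightarrow\alg k$ using Gruson's exactness theorem for $\hotimes_k\alg k$. Both the explicit choice of basis and the passage through the algebraically closed case are doing real work that your abstract lifting argument omits.
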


\begin{rema}
D'après la
remarque
\ref{rem-schauder-incomplet}, 
le lemme ci-dessus entraînera
l'existence d'une base de Schauder orthogonale de $\widehat {k(T/r)}$ sur $k$ \emph{contenant $1$} ; mais
en fait le lecteur
vérifiera que
la base de Schauder orthogonale construite dans notre preuve contient automatiquement 1. 
\end{rema}

\begin{proof}[Démonstration du lemme \ref{lem-kr-schauder}]
En vertu de \ref{ss-schauder-telesc} on peut raisonner par récurrence
sur la longueur de $r$ et donc supposer que celle-ci vaut $1$ ; on désigne
donc à partir de maintenant par $r$ un réel strictement positif et par $T$
une unique indéterminée. 

Supposons que $r\notin \abs{k\gpm}^\Q$. Dans ce cas $\widehat{k(T/r)}=k_r$ et 
la famille $(T^i)_{i\in \Z}$ est une base de Schauder orthogonale 
de $k_r$ sur $k$. 

Supposons que $r\in \abs{k\gpm}^\Q$ et soit $N$ l'ordre de $r$ modulo $ \abs{k\gpm}$. La famille $\{1, T,\ldots, T^{N-1})$ est une 
base de Schauder orthgonale de $\widehat{k(T/r)}$ sur $\widehat{k(T^N/r^N)}$. Il suffit donc de montrer que  $\widehat{k(T^N/r^N)}$
possède une base de Schauder orthogonale sur $k$ ; ceci permet de se ramener au cas où $r\in \abs{k\gpm}$ puis, par renormalisation de l'indéterminée, au cas où $r=1$. 

Fixons un ensemble $\mathscr P$ de polynômes unitaires à coefficients dans $k^\circ$ telle que $P\mapsto \widetilde P^1$ établisse une bijection entre $\mathscr P$ et l'ensemble des polynômes irréductibles unitaires de $\widetilde k^1[T]$ (notons que chaque polynôme $P$ est alors irréductible, puisque toute extension finie de $k$ contenant une racine de $P$ aura un indice d'inertie au moins égal à $\deg P$) ; si la valeur absolue de $k$ est non triviale, on suppose de plus que $\mathscr P$ est constitué de polynômes séparables. Nous allons montrer que la famille
$\mathscr B:=((T^i)_{i\geq 0}, (\frac{T^d}{P^m})_{P\in \mathscr P, 0\leq d<\deg P, m>0})$
est une base de Schauder orthogonale de $\widehat{k(T)}$ sur $k$. 

Si la valeur absolue de $k$  est triviale, c'est évident : dans ce cas, une base de Schauder orthogonale de $\widehat{k(T)}=k(T)$ sur $k$ en est simplement une base  au sens usuel, et la théorie de la décomposition en éléments simples sur le corps $k$ permet de conclure. 

Supposons maintenant la valeur absolue de $k$ n'est pas triviale. La théorie de la décomposition en éléments simples sur $\widetilde k^1$ assure que $\mathscr B$
se réduit en une base de $\widetilde k^1(T)$ sur $\widetilde k^1$ ; il s'ensuit qu'elle
constitue une base de Schauder orthogonale du sous-espace de Banach $E$ de $\widehat{k(T)}$
qu'elle engendre (topologiquement) ; il suffit maintenant de montrer que $E=\widehat{k(T)}$. Notons $\eta$ le point de Gauß de $\P^{1,\mathrm{an}}_k$ ; on a $\widehat{k(T)}=\hr \eta$. 

Supposons tout d'abord que $k$ est algébriquement clos. Chaque polynôme $P$ est alors de la forme $T-a_P$ avec $a_P\in k^\circ$ ; les $a_P$ constituent un système complet de représentants de $k^\circ$ modulo $k^{\circ \circ}$.
Soit $f\in \kappa(\eta)$. Il existe un voisinage affinoïde $V$ de $\eta$ sur lequel $f$ est définie. Quitte à restreindre $V$ on peut supposer qu'il est décrit par une conjonction d'inégalités de la forme 
\[\abs T\leq R\;\text{et}\;\abs{T-a_{P_1}}\geq r_1\;\text{et}\;\ldots\;\text{et}\;\abs{T-a_{P_s}}\geq r_s\]
où $1<R$ et $0<r_j<1$ pour tout $j$. Montrons que $f$ (vue comme fonction analytique sur $V$) s'écrit comme une somme
\[\sum a_i T^i+\sum_{j=1}^s \sum_i \frac{\alpha_{ij}}{(T-a_{P_j})^i}\]
où $\abs{a_i}R^i\to 0$ et $\abs{\alpha_{ij}}^ir_j^{-i}\to 0$ pour tout $j$ ; cela assurera que $f$ appartient à $E$, et partant que $\kappa(\eta)\subset E$ et donc que $E=\hr \eta$ par densité de $\kappa(\eta)$ dans $\hr \eta$. 
On raisonne par récurrence sur $s$. Si $s=0$ alors $V$ est le disque fermé de rayon $R$ et l'assertion requise est évidente. Supposons $s>0$ et l'assertion vraie pour les entiers $<s$. La restriction de $f$ à la couronne
$r_s\leq \abs{T-a_{P_s}}<1$ est de la forme $\sum_{i \in Z}
 \alpha_i(T-a_{P_s})^i$ où $\abs {\alpha_i}r_s^i$ tend vers $0$ quand $i$ tend vers $-\infty$ (et où $\alpha_ir^i$ tend vers $0$ lorsque $i$ tend vers $+\infty$ pour tout $r<1$). Posons $g=f-\sum_{i<0}  \alpha_i(T-a_{P_s})^i$. La fonction $g$ est alors une fonction analytique sur $V$ qui se prolonge par construction
au voisinage affinoïde de $\eta$ défini par les inégalités
\[\abs T\leq R\;\text{et}\;\abs{T-a_{P_1}}\geq r_1\;\text{et}\;\ldots\;\text{et}\;\abs{T-a_{P_{s-1}}}\geq r_{s-1}.\]On conclut en appliquant l'hypothèse de récurrence à $g$. 

Il reste à traiter le cas où le corps $k$ n'est plus nécessairement algébriquement clos. Soit $\alg k$ le complété
d'une clôture algébrique de $k$ et soit $Q$ le quotient $\hr \eta/E$, muni de la norme quotient. Il s'agit
de montrer que $Q$ est nul ;  mais $Q\otimes_k \alg k$ s'injecte d'après \cite{gruson1966}, \S3, Th. 1 dans $Q\hotimes_k \alg k$ ; il suffit donc de montrer que ce dernier 
est nul ; comme
la suite $0\to E\hotimes_k \alg k\to \hr \eta\hotimes_k \alg k\to Q\hotimes_k \alg k\to 0$ est exacte en vertu
de \emph{loc. cit.}, il suffit finalement de vérifier que $E\hotimes_k \alg k=\hr \eta\hotimes_k \alg k$.

La norme de $\hr \eta$ est universellement multiplicative (Berkovich, \emph{cf.} \cite{ducros2009}, Lemme 1.8
et exemple 1.9) ; en particulier, la norme tensorielle de $\hr \eta\hotimes_k \alg k$ est multiplicative, et coïncide
donc avec sa norme spectrale . Or comme le point de Gauß $\eta_{\alg k}$ de $\P^{1,\mathrm{an}}_{\alg k}$
est invariant sous l'action de Galois, c'est l'unique antécédent de $\eta$ sur $\P^{1,\mathrm{an}}_{\alg k}$, 
et $\mathscr M(\hr \eta\hotimes_k \alg k)$ est donc réduit au singleton $\{\eta_{\alg k}\}$.
Il s'ensuit que le morphisme naturel
$\hr \eta\hotimes_k \alg k\to \hr{\eta_{\alg k}}$ est un isomorphisme isométrique ; on identifie
désormais $\hr \eta\hotimes_k \alg k$ à $\hr{\eta_{\alg k}}$.

Pour tout $i\geq 0$ on a $T^i\in E\hotimes_k \alg k$. 
Soit maintenant $P$ un élément de $\mathscr P$
et soient $\alpha_1,\ldots, \alpha_{\deg P}$ 
ses racines dans $\alg k$. Pour tout entier $m$, 
le sous-$\alg k$-espace vectoriel de $\hr {\eta_{\alg k}}$ 
engendré par les éléments de la forme $T^d/P^\ell$ avec $0\leq d<\deg P$ et $1\leq \ell\leq m$
est contenu (par décomposition en éléments simples) dans celui engendré par les $\frac 1{(T-\alpha_i)^\ell}$
pour $i$ variant entre $1$ et $\deg P$ et $\ell$ entre $1$ et $m$ ; pour des raisons
de dimension, ces deux sous-espaces vectoriels coïncident. Par conséquent, pour tout entier $m>0$, 
tout polynôme $P\in \mathscr P$ et toute racine $\alpha$ de $P$ dans $\alg k$, l'élément $\frac 1{(T-\alpha)^m}$ 
appartient à $E\hotimes_k \alg k$. Comme l'ensemble des racines dans $\alg k$ des polynômes $P$ appartenant à $\mathscr P$ contient un système de représentants de $(\alg k)^\circ$ modulo  $(\alg k)^{\circ\circ}$
on déduit du cas algébriquement clos déjà traité que 
 $E\hotimes_k \alg k=\hr{\eta_{\alg k}}$. 
\end{proof}

\begin{defi}\label{def-distingue}
Soit $A$ une algèbre $k$-affinoïde et soit $r$ un polyrayon.
Nous dirons qu'un morphisme $\phi\colon k\{T/r\}\to A$
est \emph{distingué}
si tout élément $a$ de $A$ possède
un antécédent $b$ par $\phi$
tel que $\|b\|$ soit égal à la semi-norme spectrale
de $a$. 
\end{defi}

\subsection{}\label{distingue-reduit}
Soit $\phi\colon k\{T/r\}\to A$ un morphisme. 
Si $\phi$ est distingué la semi-norme spectrale de $A$ est une norme, 
ce qui équivaut à dire que $A$ est réduite. 
Et si $A$ est réduite alors $\phi$ est distingué si et seulement si
c'est une surjection fortement admissible lorsque $A$
est munie de sa norme spectrale : cela résulte de la définition et du fait que $\phi$
diminue les normes
spectrales.

\begin{defi}
Soit $A$ une algèbre $k$-affinoïde. Nous dirons que $A$ est \emph{distinguée}
s'il existe un polyrayon $r$ et une surjection distinguée
$u\colon k\{T/r\}\to A$.

Nous dirons que $A$ est \emph{$\Gamma$}-distinguée
s'il existe une surjection distinguée $k\{T/r\}\to A$ pour un certain
polyrayon $r$ 
constitué d'éléments de $\Gamma$. 
\end{defi}

\begin{rema}
Lorsque la valuation de $k$ n'est pas triviale, une algèbre
strictement $k$-affinoïde
est $\{1\}$-distinguée
si et seulement si elle est distinguée
au sens de \cite{bosch-g-r1984}, 6.4.3. 
\end{rema}

\subsection{}\label{distingue-bon-rayon}
Soit $A$ une algèbre $k$-affinoïde distinguée dont on note
$\|\cdot\|$
la norme spectrale (rappelons 
que $A$ est réduite d'après \ref{distingue-reduit}).

\subsubsection{}\label{sss-distingue-bon-rayon}
Il existe 
un morphisme distingué
 $\psi\colon k\{\Theta_1/\rho_1,\ldots, \Theta_m/\rho_m\}\to A$ tel que $\|\psi(\Theta_i)\|=\rho_i$ pour tout
 $i$. 
 Partons en effet d'un morphisme distingué quelconque
 $\phi\colon k\{T_1/r_1,\ldots, T_n/r_n\}\to A$ ; pour tout $i$ on
 pose $\rho_i=\|\phi(T_i)\|$. Soit $I$ l'ensemble des indices $i$ tels que $\rho_i>0$ ; on a $\phi(T_i)=0$
 pour tout $i\notin I$. Le
 morphisme $\phi \colon k\{T/r\}\to A$ admet alors une factorisation
 \[\xymatrix{
 k\{T/r\}\ar[rr]^\chi&&k\{\Theta_i/\rho_i\}_{i\in I}\ar[rr]^\psi&&A
 }\]
 où $\chi$ envoie $T_i$ sur $\Theta_i$ si $i\in I$
 et sur $0$ sinon, et où
 $\psi$ envoie $\Theta_i$ sur $\phi(T_i)$ pour tout $i\in I$.
 Soit $a\in A$. Il possède un antécédent $b$ par $\phi$ tel que $\|b\|=\|a\|$. 
 On a alors $a=\psi(\chi(b))$ et $\|a\|\leq \|\chi(b)\|\leq \|b\|=\|a\|$, si
 bien que $\|\chi(b)\|=\|a\|$.

\subsubsection{}
\label{sss-distingue-perturbation}
Donnons-nous
un morphisme $\psi\colon k\{\Theta_1/\rho_1,\ldots, \Theta_m/\rho_m\}\to A$ comme au \ref{sss-distingue-bon-rayon}
ci-dessus. 
 Pour tout $i$, on pose $f_i=\psi(\Theta_i)$ et on suppose
 donné un élément $g_i\in A$ tel que $\norm{f_i-g_i}<\rho_i$. 
 
 Soit $a\in A$. Comme $\psi$ est distingué il existe une série $\sum \alpha_I \Theta^I$
 appartenant à $k\{\Theta/\rho\}$ telle que $a=\sum \alpha_If^I$ et $\max \abs{\alpha_I}\rho^I=\norm a$.
 En écrivant $f_i=g_i+(f_i-g_i)$ et en développant l'égalité $a=\sum \alpha_If^I$ on voit qu'il existe une série
 $\sum \beta_I \Theta^I$
 appartenant à $k\{\Theta/\rho\}$ telle que $a=\sum \beta_Ig^I$ et telle que $\max \abs{\beta_I}\rho^I=\norm a$. 
 Par conséquent, le
 $\psi' \colon  k\{\Theta_1/\rho_1,\ldots, \Theta_m/\rho_m\}\to A$ qui envoie $\Theta_i$ sur $g_i$ pour tout $i$ est lui
 aussi une surjection distinguée.
 
 \begin{rema}\label{rem-distingue-bonrayon}
 Soit $A$ une algèbre $k$-affinoïde distinguée.
 Si $A$ est $\Gamma$-distinguée, il existe une surjection distinguée $k\{T/r\}\to A$ avec $r$ constitué d'éléments de
 $\Gamma$, ce qui implique que $\norm A\subset \abs k\cdot \Gamma$ (et entraîne par ailleurs que $A$ est $\Gamma$-stricte). 

Réciproquement, supposons que $\norm A\subset \abs k\cdot \Gamma$, et
choisissons une surjection distinguée $k\{T/r\}\to A$. Il
résulte de \ref{sss-distingue-bon-rayon}
que $k\{T/r\}\to A$ se factorise par une surjection distinguée $v\colon k\{\Theta_1/\rho_1,\ldots, \Theta_n/\rho_n\}\to A$ telle que
$\norm{v(\Theta_i)}=\rho_i$ pour tout $i$. 
Chacun des $\rho_i$ appartient donc à $\norm A\setminus\{0\}\subset
\abs {k\gpm}\cdot \Gamma$.
En renormalisant convenablement les indéterminées on obtient une surjection distinguée
$w\colon k\{\Theta'_1/\rho'_1,\ldots, \Theta'_n/\rho'_n\}\to A$ où
chacun des $\rho'_i$ appartient à $\Gamma$ (et 
où $\norm{w(\Theta'_i)}=\rho'_i$ pour tout $i$). 
Il s'ensuit que $A$ est 
$\Gamma$-distinguée. 
\end{rema} 
 
\begin{lemm}\label{lem-distingue-changebase}
Soit $A$ une $k$-algèbre affinoïde et soit $L$ une extension complète de $k$
possédant une base de Schauder orthogonale sur $k$.
On
fait les hypothèses suivantes : 
\begin{enumerate}[a]
\item l'algèbre $A$ est réduite ; 
\item lorsqu'on munit $A$ de sa norme spectrale, la norme tensorielle de $A_L$ coïncide
avec sa (semi)-norme spectrale (ce qui implique que $A_L$
est réduite). 
\end{enumerate}
Les assertions suivantes sont alors équivalentes : 
\begin{enumerate}[i]
\item l'algèbre $k$-affinoïde $A$ est distinguée ; 
\item l'algèbre $L$-affinoïde $A_L$ est distinguée. 
\end{enumerate}
\end{lemm}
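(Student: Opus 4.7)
The two implications are handled in very different ways; the forward one is routine base change while the reverse requires a descent argument powered by the Schauder basis.

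For (i)$\Rightarrow$(ii), I would start from a distinguished surjection $\phi\colon k\{T/r\}\to A$ and base change to obtain $\phi_L\colon L\{T/r\}\to A_L$. By \ref{distingue-reduit}, $\phi$ is a strongly admissible surjection onto $A$ endowed with its spectral norm. Proposition \ref{prop-fort-adm} then implies that $\phi_L$ is a strongly admissible surjection onto $A_L$ for the quotient norm. The Schauder basis $(\ell_j)$ of $L$ over $k$ (lemma~\ref{lem-kr-schauder} is not directly needed here; we only use hypothesis~(a)--(b)) yields an isometric decomposition $A_L=\widehat{\bigoplus_j A(|\ell_j|)}$, so the quotient norm coincides with the tensor norm, which by hypothesis~(b) coincides with the spectral norm on $A_L$. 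Hence $\phi_L$ is distinguished by \ref{distingue-reduit}.

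For (ii)$\Rightarrow$(i), I would fix, thanks to \ref{sss-distingue-bon-rayon}, a distinguished surjection $\psi\colon L\{\Theta_1/\rho_1,\ldots,\Theta_m/\rho_m\}\to A_L$ with $\|\psi(\Theta_i)\|=\rho_i$, and choose an orthogonal Schauder basis $(\ell_j)_{j\in J}$ of $L$ over $k$ with $\ell_0=1$ (Remark~\ref{rem-schauder-incomplet}). Hypothesis~(b) identifies $A_L$ isometrically with $\widehat{\bigoplus_j A(|\ell_j|)}$, giving a canonical norm-contracting $k$-linear projection $\pi\colon A_L\to A$ that restricts to the identity on $A$. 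Writing $f_i=\psi(\Theta_i)=\sum_j f_{i,j}\ell_j$ with $f_{i,j}\in A$ and $\max_j \|f_{i,j}\|\cdot|\ell_j|=\rho_i$, the natural candidate is $g_i:=\pi(f_i)=f_{i,0}\in A$, and the claim is that (after an appropriate rescaling of the $\Theta_i$ by elements of $L^\times$ absorbing a possible discrepancy between $\rho_i$ and $\|g_i\|$) the morphism $\phi\colon k\{\Theta/\rho'\}\to A$ defined by $\Theta_i\mapsto g_i$ is distinguished.

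To verify distinguishedness of $\phi$, given $a\in A$ I would pick a preimage $F\in L\{\Theta/\rho'\}$ of $a$ under $\psi'$ (the rescaled $\psi$) with $\|F\|=\|a\|_{A_L}=\|a\|_A$, decompose $F=\sum_j F^{(j)}\ell_j$ with $F^{(j)}\in k\{\Theta/\rho'\}$ and $\|F\|=\max_j\|F^{(j)}\|\cdot|\ell_j|$, and propose $F^{(0)}$ as the preimage of $a$ under $\phi$.

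\textbf{Main obstacle.} The hard point is that $\pi$ is $k$-linear but not multiplicative, so $\phi(F^{(0)})\neq\pi(\psi(F^{(0)}))$ in general: the cross-terms from $f_i=g_i+\sum_{j\neq 0}f_{i,j}\ell_j$ spread into every Schauder component, and it is not obvious that their contribution to the $\ell_0$-component is controlled. The strategy I would follow is to first arrange, by a preliminary perturbation (\ref{sss-distingue-perturbation} applied to $\psi$), that the $f_{i,j}$ for $j\neq 0$ satisfy $\|f_{i,j}\|\cdot|\ell_j|<\rho_i$; this turns $\psi$ into $\phi\hat\otimes_k L$ after descent, at which point distinguishedness of $\phi$ follows from the already-established (i)$\Leftrightarrow$(ii) equivalence for fixed surjections, via Proposition~\ref{prop-fort-adm}. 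The delicate part is justifying this preliminary perturbation: when some $f_{i,j}$ with $j\neq 0$ achieves the maximum $\rho_i$, one must reorganize (rescaling $\Theta_i$ by $\ell_j\in L^\times$ and using that $\rho_i\in\|A_L\|\subseteq\|A\|\cdot|L^\times|$) to bring the leading component into $\ell_0=1$; completeness of $A$ and an iteration of the perturbation argument secure convergence. The indispensable inputs throughout are the existence of a Schauder basis containing $1$ and hypothesis~(b), which together transform the base-change datum into a genuine orthogonal direct-sum decomposition of $A_L$ over $A$.
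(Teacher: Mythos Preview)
Your treatment of (i)$\Rightarrow$(ii) is correct and matches the paper's argument.

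For (ii)$\Rightarrow$(i), however, your strategy has a genuine gap that the paper resolves by a different and cleaner idea. You try to descend while keeping the \emph{same} number of variables: project each $f_i\in A_L$ to its $\ell_0$-component $g_i\in A$ and show that $\Theta_i\mapsto g_i$ is distinguished. You correctly identify the obstacle (the projection is not multiplicative), but your proposed fix---perturb so that only the $\ell_0$-component achieves the maximum, rescaling by $\ell_j\in L^\times$ when some $j\neq 0$ dominates---does not work. The perturbation result~\ref{sss-distingue-perturbation} only allows replacing $f_i$ by $g_i$ when $\|f_i-g_i\|<\rho_i$, which fails precisely when some $j\neq 0$ achieves the maximum. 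And rescaling $f_i$ by $\ell_j^{-1}$ gives $\ell_j^{-1}f_i=\lambda_{i,j}+\sum_{j'\neq j}\lambda_{i,j'}\ell_{j'}\ell_j^{-1}$, but the Schauder basis is only a Banach-space basis with no multiplicative structure: the elements $\ell_{j'}\ell_j^{-1}$ have no controlled Schauder expansion, so the ``leading component'' has not been brought into position~$0$. There is simply no reason the $n$ elements $g_i=\lambda_{i,0}$ should suffice to generate $A$ in a distinguished way.

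The paper sidesteps this entirely by \emph{enlarging the set of variables}. After perturbing so that each $f_i=\sum_j\lambda_{i,j}\Theta_j$ is a \emph{finite} sum, it introduces one variable $S_{i,j}$ for each nonzero $\lambda_{i,j}\in A$, with radius $s_{i,j}=\|\lambda_{i,j}\|$, and defines $\phi\colon k\{S_{i,j}/s_{i,j}\}\to A$ by $S_{i,j}\mapsto\lambda_{i,j}$. Now $\phi_L$ is $L$-linear and hence \emph{does} respect the Schauder decomposition of $A_L$. Given $a\in A$ with distinguished preimage $\sum a_IT^I$ in $L\{T/r\}$, substitute $T_i\mapsto\sum_jS_{i,j}\Theta_j$ (legitimate since $\|\sum_jS_{i,j}\Theta_j\|=r_i$) to get $b\in L\{S/s\}$ with $\phi_L(b)=a$ and $\|b\|\leq\|a\|$; its $\Theta_0$-component $b_0\in k\{S/s\}$ then satisfies $\phi(b_0)=a$ and $\|b_0\|\leq\|a\|$. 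The extra variables are exactly what makes the projection argument go through.
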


\begin{proof}
Choisissons une base de Schauder $(\Theta_j)_{j\in J}$ de $L$ sur $k$,
en faisant en sorte qu'il existe $0\in J$ tel que $\Theta_0=1$
(remarque \ref{rem-schauder-incomplet}).
Supposons $A$ distinguée et choisissons un morphisme distingué $\phi \colon k\{T/r\}\to A$.
Le morphisme $\phi$ est une surjection fortement admissible et
la proposition \ref{prop-fort-adm}
(que l'on applique en voyant $A$ comme un $k\{T/r\}$-module de Banach fini \emph{via}
$\phi$)
assure que $\phi_L$
est
une surjection fortement admissible lorsque $A_L$ est munie de sa norme tensorielle, qui coïncide ici par hypothèse
avec sa norme spectrale. Par conséquent $\phi_L$ est distingué et $A_L$
est distinguée. 

Réciproquement, supposons $A_L$ distinguée. On note $\|\cdot\|$
les normes spectrales de $A$ et $A_L$. Choisissons
un morphisme distingué $L\{T_1/r_1,\ldots, T_n/r_n\}\to A_L$,
et notons $f_i$ l'image de $T_i$ pour tout $i$. 
Il résulte de \ref{distingue-bon-rayon} qu'on peut supposer
que $\|f_i\|=r_i$ pour tout $i$ puis, quitte à perturber
si besoin le morphisme $L\{T/r\}\to A_L$, 
que $f_i$
est pour tout $i$ une somme \emph{finie}
$\sum \lambda _{i,j} \Theta_j$
(où les $\lambda_{i,j}$
appartiennent à $A$). 
Pour tout
$(i,j)$ tel que $\lambda_{i,j}\neq 0$ on pose
$s_{i,j}=\|\lambda _{i,j}\|$ ; les réels $s_{i,j}$ sont tous 
strictement positifs ; on a par ailleurs $r_i=\|f_i\|=\max_j s_{i,j}\rho_j$. 
Soit $\phi \colon k\{S_{i,j}/s_{i,j}\}_{i,j}\to A$ le
morphisme qui envoie $S_{i,j}$ sur $\lambda_{i,j}$ pour tout $(i,j)$ ; on note
$\phi_L\colon L\{S_{i,j}/s_{i,j}\}_{i,j}\to A_L$ le morphisme déduit de $\phi$ par extension des scalaires. 

Soit $a$ appartenant à $A$.
Par hypothèse, il existe un élément $\sum a_I T^I$
de $L\{T/r\}$ tel que $a=\sum a_If^I$ et $\max\;\abs{a_I} r^I=\|a\|$. Comme
\[\left\|\sum_j S_{i,j}\Theta_j\right\|=\max_J s_{i,j}\rho_j=r_i\]
pour tout $i$, on peut évaluer la série
$\sum a_I T^I$ en $(\sum_j S_{i,j}\Theta_j)_{1\leq i\leq n}$. On obtient un élément
$b\in L\{S_{i,j}/s_{i,j}\}_{i,j}$, de norme majorée
par $\|\sum a_I f^I\|=\|a\|$, et qui vérifie par construction l'égalité $\phi_L(b)=a$. 
Puisque $a\in A$ on a également $\phi(b_0)=a$ où $b_0$ désigne le coefficient 
de
$\Theta_0=1$
dans la décomposition de $b$ sur la «base» $(\Theta_j)$. 
Par ailleurs la définition de la norme de $L\{S_{i,j}/s_{i,j}\}_{i,j}$ entraîne que $\|b_0\|\leq \|b\|
\leq\|a\|$. Comme
$\phi$ contracte les normes spectrales il vient  $\|b_0\|=\|a\|$. 
\end{proof}

\begin{lemm}\label{lem-norm-ar}
Soit $A$ une algèbre $k$-affinoïde réduite, munie de sa norme spectrale $\norm \cdot$. 
Soit $r=(r_1,\ldots, r_n)$ un polyrayon, soit $\norm \cdot$ la norme tensorielle de $A_{\widehat{k(T/r)}}$
et soit $\norm \cdot_\infty$ sa semi-norme spectrale. 
On a l'égalité $\norm \cdot_\infty=\norm \cdot$, et
de plus $\norm {A_{\widehat{k(T/r)}}}$ est égal à
$\norm A \cdot r_1^\Z\cdot\ldots\cdot r_n^\Z$. 
\end{lemm}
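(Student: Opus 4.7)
L'inégalité $\norm\cdot_\infty\leq\norm\cdot$ est automatique puisque la norme tensorielle est sous-multiplicative. Pour l'inégalité inverse, la stratégie consistera, à la manière des points de Gauß de Berkovich, à construire pour chaque $x\in\mathscr M(A)$ un point $\zeta_x$ de $\mathscr M(A_{\widehat{k(T/r)}})$ en lequel la valeur absolue de tout $f$ se lira explicitement sur une décomposition de Schauder, puis à conclure par un passage au supremum sur $x$.

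Le lemme \ref{lem-kr-schauder} garantit l'existence d'une base de Schauder orthogonale $(\Theta_j)$ de $\widehat{k(T/r)}$ sur $k$. Par le théorème d'extension des scalaires de Gruson (\cite{gruson1966}, \S3, Th.~1), appliqué au $k$-espace de Banach $\widehat{k(T/r)}$ muni de cette base et à l'extension isométrique $k\hookrightarrow A$ (la norme spectrale de $A$ induisant $\abs\cdot$ sur $k$, ce qui est licite car $A$ est réduite), la famille $(\Theta_j)$ demeure une base de Schauder orthogonale du $A$-module de Banach $A_{\widehat{k(T/r)}}$ : tout $f$ s'écrit ainsi de manière unique $\sum a_j\Theta_j$ avec $a_j\in A$, $\norm{a_j}\cdot\norm{\Theta_j}\to 0$ et $\norm f=\max_j\norm{a_j}\cdot\norm{\Theta_j}$. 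Le même énoncé vaut pour $\hr x\hotimes_k\widehat{k(T/r)}$ vu comme $\hr x$-espace de Banach, dont la norme tensorielle envoie alors $\sum\alpha_j\Theta_j$ sur $\max_j\abs{\alpha_j}\cdot\norm{\Theta_j}$.

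L'ingrédient décisif sera ensuite le fait que la norme du point de Gauß $\eta_r$, c'est-à-dire celle de $\widehat{k(T/r)}$, est universellement multiplicative (Berkovich, \emph{cf.} \cite{ducros2009}, Lemme~1.8 et Exemple~1.9) : pour tout $x$, la norme tensorielle sur $\hr x\hotimes_k\widehat{k(T/r)}$ est donc multiplicative. Le composé
\[A\hotimes_k\widehat{k(T/r)}\longrightarrow \hr x\hotimes_k\widehat{k(T/r)}\longrightarrow\R_{\geq 0},\]
où la première flèche est induite par l'évaluation en $x$ et la seconde est la norme tensorielle, constituera donc une semi-norme multiplicative bornée, correspondant à un point $\zeta_x$ de $\mathscr M(A_{\widehat{k(T/r)}})$ ; pour $f=\sum a_j\Theta_j$ on aura par construction $\abs{f(\zeta_x)}=\max_j\abs{a_j(x)}\cdot\norm{\Theta_j}$. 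En prenant le supremum sur $x$ et en invoquant $\norm{a_j}=\sup_x\abs{a_j(x)}$ (valable car $A$ est réduite), on obtiendra
\[\norm f_\infty\geq \sup_x\abs{f(\zeta_x)}\geq\max_j\sup_x\abs{a_j(x)}\cdot\norm{\Theta_j}=\max_j\norm{a_j}\cdot\norm{\Theta_j}=\norm f,\]
ce qui établira la première assertion. Quant à la description du groupe des normes, l'identité précédente et le fait que $\norm{\Theta_j}\in\abs{k\gpm}\cdot r_1^{\Z}\cdots r_n^{\Z}$ fourniront l'inclusion $\subseteq$, et la multiplicativité au point $\zeta_x$ appliquée à $aT^I$ ($a\in A$, $I\in\Z^n$) donnera $\norm{aT^I}=\norm a\cdot r^I$, d'où l'inclusion $\supseteq$. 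Je n'anticipe pas de difficulté majeure dans ce plan, hormis la nécessité d'invoquer de manière soigneuse le théorème de Gruson et l'universelle multiplicativité, résultats tous deux déjà utilisés plus haut dans le texte.
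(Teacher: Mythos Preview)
Ton argument est correct, mais il emprunte une voie plus lourde que celle du papier. Le papier se contente de traiter directement les \emph{polynômes} $P=\sum_{I\text{ fini}} a_I T^I\in A[T]$ : pour un tel $P$, il calcule la semi-norme spectrale fibre par fibre via la projection $\pi\colon\mathscr M(A_{\widehat{k(T/r)}})\to\mathscr M(A)$, observant que $\sup_{y\in\pi^{-1}(x)}|P(y)|=\max_I|a_I(x)|r^I$ (le point de Gauß de paramètre $r$ sur $\hr x$ réalise ce maximum), d'où $\norm P_\infty=\max_I\norm{a_I}r^I=\norm P$. Il conclut ensuite par un argument de densité : $\norm\cdot$ et $\norm\cdot_\infty$ sont toutes deux des normes de $\widehat{k(T/r)}$-espaces vectoriels, et les polynômes engendrent un sous-$\widehat{k(T/r)}$-espace dense. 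C'est nettement plus court : aucune base de Schauder, aucun recours à la multiplicativité universelle, aucun besoin de décrire tous les éléments de $A_{\widehat{k(T/r)}}$. Ton approche évite l'argument de densité en traitant d'emblée un $f$ arbitraire via sa décomposition de Schauder, au prix d'invoquer le lemme~\ref{lem-kr-schauder} et la multiplicativité universelle du point $\eta_r$.

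Une remarque sur ta référence à Gruson : le théorème que tu cites (\cite{gruson1966}, \S3, Th.~1) concerne l'extension des scalaires de $k$ à un corps valué complet $L$, pas à une $k$-algèbre de Banach $A$ quelconque. Le fait dont tu as besoin --- qu'une base de Schauder orthogonale $(\Theta_j)$ de $\widehat{k(T/r)}$ reste une base de Schauder orthogonale de $A\hotimes_k\widehat{k(T/r)}$ sur $A$ --- est plus élémentaire : l'isométrie $\widehat{k(T/r)}\simeq\widehat{\bigoplus_j k(\norm{\Theta_j})}$ donne par fonctorialité du produit tensoriel complété une isométrie $A\hotimes_k\widehat{k(T/r)}\simeq\widehat{\bigoplus_j A(\norm{\Theta_j})}$, et il suffit de vérifier directement que la norme tensorielle sur $A\otimes_k\bigoplus_j k(s_j)$ envoie $\sum a_j\otimes e_j$ sur $\max_j\norm{a_j}s_j$ (ce qui se fait en deux lignes). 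Ta preuve reste donc valable, mais il vaudrait mieux remplacer la citation de Gruson par ce calcul direct.
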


\begin{rema}\label{rem-ar-reduit}
On sait que
$\norm \cdot_\infty$ est une norme car $A$ est réduite et car $\widehat{k(T/r)}$ est analytiquement séparable, \emph{cf.}
\cite{ducros2009}, exemple 1.9 et théorème 3.1 ; mais nous n'allons pas utiiser ce fait 
dans la preuve du lemme, qui le redémontrera donc au passage.
\end{rema}

\begin{proof}[Preuve du lemme \ref{lem-norm-ar}]
Soit $P$ un élément de $A_{\widehat{k(T/r)}}$ de la forme
$\sum a_I T^I$ où $I$ parcourt un ensemble fini. 
On a $\norm P\leq \max_I \norm {a_I}\cdot r^I$.
Soit $\pi\colon \mathscr M(A_{\widehat{k(T/r)}})
\to \mathscr M(A)$ l'application continue naturelle. 
On a les égalités
\begin{eqnarray*}
\norm P_{\infty}&=&\sup_{y\in \mathscr M(A_{\widehat{k(T/r)}})} \abs{P(y)}\\
&=&\sup_{x\in \mathscr M(A)}\sup_{y\in \pi\inv(x)}\abs{P(y)}\\
&=&\sup_{x\in \mathscr M(A)}\max_I \abs{a_I(x)}r^I\\
&=&\max_I \norm{a_I}r^I.
\end{eqnarray*}
Comme on a par ailleurs $\norm P_\infty \leq \norm P$, 
il vient $\norm P_\infty=\norm P=\max_I \norm{a_I}r^I$. 

On conclut en remarquant que $\norm \cdot$ et $\norm \cdot_\infty$ sont des normes
de $\widehat{k(T/r})$-espaces vectoriels et que les éléments $P$ comme ci-dessus engendrent un sous-$\widehat{k(T/r})$ -espace
vectoriel dense de $A_{\widehat{k(T/r)}}$.
\end{proof}

La proposition suivante fait appel de manière cruciale à la notion de $\Gamma$-annéloïde résiduel introduite au
\ref{def-annresiduel-gamma}.

\begin{prop}\label{distingue-residu-reduit}
Soit $r=(r_1,\ldots, r_n)$ un polyrayon
constitué d'éléments de $\Gamma$ et soit $I$ un idéal de $k\{T/r\}$. 
Soit $(a_1,\ldots, a_m)$ une famille fortement génératrice de $I$
telle que $\norm {a_i}\in \Gamma$ pour tout $i$. 
Soit $A$ l'algèbre quotient $k\{T/r\}/I$
et soit $\norm \cdot_{\mathrm q}$ sa norme quotient. 

\begin{enumerate}[1]
\item La norme quotient $\norm \cdot_{\mathrm q}$ est sous-multiplicative et à valeurs dans $\abs k \cdot \Gamma$. On note
$\widetilde A_{\mathrm q}$ le $\Gamma$-annéloïde résiduel de $(A,\norm \cdot _{\mathrm q})$. 

\item La surjection $k\{T/r\}\to A$
induit un isomorphisme
$\widetilde k[r\backslash T]/(\widetilde{a_1},\ldots, \widetilde{a_m})\simeq \widetilde A_{\mathrm q}$. 

\item 
Les assertions suivantes sont équivalentes : 
\begin{enumerate}[j]
\item La surjection  $k\{T/r\}\to A$ est distinguée. 
\item La surjection  $k\{T/r\}\to A$  induit un isomorphisme de $\widetilde k$-algèbres
\[\widetilde k [r\backslash T]/(\widetilde{a_1},\ldots, \widetilde{a_m})\simeq \widetilde A\]
(rappelons que $\widetilde A$ désigne le $\Gamma$-annéloïde résiduel de $A$ relatif à sa semi-norme spectrale). 
\item La $\widetilde k$-algèbre $\widetilde k [r\backslash T]/(\widetilde{a_1},\ldots, \widetilde{a_m})$
est réduite. 
\end{enumerate}
\end{enumerate}
\end{prop}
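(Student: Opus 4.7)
L'ingrédient central est que l'hypothèse de stricte fermeture des sous-modules de $k\{T/r\}$ (proposition~\ref{prop-fort-gen}) assure que la surjection $\pi\colon k\{T/r\}\to A$ est fortement admissible : tout $a\in A$ possède un antécédent $b\in k\{T/r\}$ tel que $\norm b=\norm a_{\mathrm q}$. Ceci fournit immédiatement l'assertion~(1) : la sous-multiplicativité de $\norm \cdot_{\mathrm q}$ est standard, et comme $\norm{k\{T/r\}}\subset \abs {k}\cdot r_1^{\Z}\cdots r_n^{\Z}\subset \abs k\cdot\Gamma$, il en va de même de $\norm \cdot _{\mathrm q}$.

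Pour~(2), on réduit la surjection $\pi$ pour obtenir un morphisme naturel gradué
\[
\bar\pi \colon \widetilde k[r\backslash T]\simeq \widetilde{k\{T/r\}}\longrightarrow \widetilde A_{\mathrm q}.
\]
Chaque $\widetilde {a_i}$ appartient visiblement à $\ker\bar\pi$ ; il faut vérifier que le morphisme induit $\widetilde k[r\backslash T]/(\widetilde{a_1},\ldots, \widetilde{a_m})\to \widetilde A_{\mathrm q}$ est un isomorphisme. La surjectivité utilise l'admissibilité forte : pour un élément non nul $\widetilde a^s\in \widetilde A_{\mathrm q}^s$, on relève $a$ en $b\in k\{T/r\}$ avec $\norm b=s$, et $\widetilde b^s$ s'envoie sur $\widetilde a^s$. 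Pour l'injectivité, supposons $\bar\pi(\widetilde P^s)=0$ pour $P\in k\{T/r\}$ avec $\norm P=s\in\Gamma$ ; cela signifie $\norm{\pi(P)}_{\mathrm q}<s$, d'où un $Q\in I$ tel que $\norm{P-Q}<s$ et partant $\widetilde P^s=\widetilde Q^s$. L'hypothèse de génération forte fournit alors une écriture $Q=\sum b_ia_i$ avec $\max\norm{b_i}\cdot\norm{a_i}=\norm Q\leq s$, et en passant en degré $s$ on obtient $\widetilde Q^s=\sum_{i\colon \norm{b_i}\norm{a_i}=s}\widetilde {b_i}\cdot\widetilde{a_i}$, qui vit bien dans l'idéal $(\widetilde{a_i})$ ; pour les indices $i$ en jeu, la valeur $\norm{b_i}=s/\norm{a_i}$ appartient à $\Gamma$, ce qui donne un sens aux produits gradués.

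Pour~(3), l'observation-clé est que $\pi$ est distinguée si et seulement si $\norm\cdot_{\mathrm q}=\norm\cdot_{\mathrm{sp}}$ sur $A$ : en effet, l'admissibilité forte fournit toujours un relèvement réalisant la norme quotient, et on a toujours $\norm \cdot_{\mathrm{sp}}\leq \norm\cdot_{\mathrm q}$. Sous cette reformulation, (j)$\Leftrightarrow$(jj) résulte de~(2) appliqué respectivement aux deux normes (qui coïncident exactement lorsque $\widetilde A_{\mathrm q}=\widetilde A$), et (jj)$\Rightarrow$(jjj) est automatique, $\widetilde A$ étant la réduction graduée d'une semi-norme spectrale, donc d'une semi-norme multiplicative par les puissances.

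Le seul point qui demande un peu de travail est (jjj)$\Rightarrow$(j). Supposant $\widetilde k[r\backslash T]/(\widetilde{a_i})$ réduit, donc $\widetilde A_{\mathrm q}$ réduit via~(2), je montrerais que $\norm \cdot _{\mathrm q}$ est multiplicative par les puissances. Soit $a\in A$ non nul ; quitte à remplacer $a$ par $\lambda\inv a$ pour un $\lambda\in k\gpm$ convenable, on peut supposer $s:=\norm a_{\mathrm q}\in \Gamma$. L'argument d'admissibilité forte de~(2) montre alors que $\widetilde a^s\neq 0$ dans $\widetilde A_{\mathrm q}^s$ ; la réductibilité de $\widetilde A_{\mathrm q}$ entraîne $(\widetilde a^s)^n=\widetilde{a^n}^{s^n}\neq 0$, soit $\norm{a^n}_{\mathrm q}=s^n$. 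Ainsi $\norm \cdot_{\mathrm q}=\norm \cdot _{\mathrm{sp}}$, et $\pi$ est distinguée. L'obstacle principal attendu est exactement cette vérification de multiplicativité par les puissances ; il se contourne proprement par la renormalisation scalaire ci-dessus, qui ramène tous les degrés pertinents dans $\Gamma$.
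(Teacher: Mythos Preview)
Your proof is correct and follows essentially the same route as the paper's: strong admissibility from Proposition~\ref{prop-fort-gen} for~(1), lifting plus strong generation for~(2), and the circular chain $(\text{j})\Rightarrow(\text{jj})\Rightarrow(\text{jjj})\Rightarrow(\text{j})$ for~(3), where the last implication goes through power-multiplicativity of $\norm\cdot_{\mathrm q}$. One cosmetic remark: your claimed equivalence $(\text{j})\Leftrightarrow(\text{jj})$ asserts in passing that $\widetilde A_{\mathrm q}=\widetilde A$ forces $\norm\cdot_{\mathrm q}=\norm\cdot_{\mathrm{sp}}$, which you do not justify; but this is harmless, since the forward implication $(\text{j})\Rightarrow(\text{jj})$ together with your detailed argument for $(\text{jjj})\Rightarrow(\text{j})$ already closes the cycle.
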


\begin{proof}
La norme de $k\{T/r\}$ est multiplicative, ce qui entraîne que $\norm \cdot_{\mathrm q}$ est sous-multiplicative. 
Par ailleurs, l'idéal $I$ est strictement fermé
dans $k\{T/r\}$ d'après la proposition 
\ref{prop-fort-gen}, ce qui implique que $k\{T/r\}\to A$ est fortement
admissible lorsqu'on voit $A$ comme munie de $\norm \cdot_{\mathrm q}$.
La norme de $k\{T/r\}$ étant  à valeurs
dans $\abs k\cdot \Gamma$, il en résulte que 
$\norm A_{\mathrm q}\subset \abs k\cdot \Gamma$. 

\subsubsection{}
Montrons que
le morphisme 
$\widetilde k[r\backslash T]\/(\widetilde{a_1},\ldots, \widetilde{a_m})\to \widetilde A_{\mathrm q}$
est bijectif. Pour tout $a\in A$ et tout
élément
$R$ de $\Gamma$ supérieur
ou égal à $\norm a_{\mathrm q}$
on note $\widetilde a_{\mathrm q}^R$
l'image de $a$ dans $\widetilde A_{\mathrm q}^R$,
et l'on omettra le $R$ en exposant si $R=\norm a_{\mathrm q}$.

Commençons par la surjectivité. Soit $\alpha\in \widetilde A_{\mathrm q}$ ; montrons
qu'il possède un antécédent dans $\widetilde k[r\backslash T]$. C'est clair
si $\alpha=0$. Supposons que $\alpha\neq 0$ et choisissons 
un élément $a$
de $A$ de norme
appartenant à $\Gamma$ et
tel que
$\alpha=\widetilde a_{\mathrm q}$.
Puisque 
$k\{T/r\}\to A$ est fortement admissible relativement
à $\norm \cdot_{\mathrm q}$, 
il existe un antécédent $b$ de $a$
dans $k\{T/r\}$ tel que $\|b\|=\|a\|_{\mathrm q}$. L'élément $\alpha=\widetilde a_{\mathrm q}$ de $\widetilde A_{\mathrm q}$ est alors
égal à l'image de $\widetilde b$. 

Soit maintenant $\beta$ un élément de $\widetilde k[r\backslash T]$.
Nous allons montrer que l'image de $\beta$ dans $\widetilde A_{\mathrm q}$ est nulle
si et seulement si $\beta\in(\widetilde{a_1},\ldots, \widetilde{a_m})$. On peut supposer $\beta$ non nul ; il est alors égal à
$\widetilde b$ pour un certain élément $b$
de $k\{T/r\}$ dont la norme appartient à $\Gamma$ ; notons $a$ l'image de $b$ dans $A$. L'image
de $\beta$ dans $\widetilde A_{\mathrm q}$ est égale à $\widetilde a_{\mathrm q}^R$. 

Supposons que $\beta$ appartienne à $(\widetilde{a_1},\ldots, \widetilde{a_n})$. Il existe alors
$\lambda_1,\ldots \lambda_n\in k\{T/r\}$ tels que $\|\lambda_i\|\cdot \|\alpha_i\|\leq R$ pour tout $i$ et tels que
$\|b-\sum \lambda_i a_i\|<R$. Comme $b-\sum \lambda_i \alpha_i$ a également pour image $a$
dans $A$, il vient $\|a\|_{\mathrm q}<R$ puis $\widetilde a_{\mathrm q}^R=0$. 

Inversement, supposons que $\widetilde a_{\mathrm q}^R=0$. Cela signifie que $\|a\|_{\mathrm q}<R$. Par conséquent, $a$ possède un antécédent 
$c$ dans $k\{T/r\}$ tel que $\|c\|<R$. Puisque $b$ et $c$ ont même image dans $A$, la différence $b-c$ appartient à $I$. Comme les $\alpha_i$
engendrent fortement $I$, il existe une famille $(\lambda_i)$ d'éléments de $k\{T/r\}$ tels que $\|\lambda_i\|\cdot \|\alpha_i\|\leq \|b-c\|=R$
pour tout $i$ et telle que $b-c=\sum \lambda_i a_i$. Mais on a alors
$\beta=\widetilde b=\sum_{i\in I} \widetilde{\lambda_i} \widetilde {a_i}$
où $I$ désigne l'ensemble des indices $i$ tels que $\norm \lambda_i\cdot \norm {a_i}=\norm b$.

\subsubsection{}
Montrons maintenant (3) par implications circulaires. Si $k\{T/r\}\to A$ est distinguée, la norme $\norm \cdot_{\mathrm q}$ coïncide avec la norme spectrale de $A$, et (ii) se déduit alors de (2). 
Si (ii) est vraie alors (iii) est vraie car $\widetilde A$ est réduite. Enfin si (iii) est vraie alors on déduit de (2) que $\norm \cdot _{\mathrm q}$
commute aux puissances, ce qui implique qu'elle coïncide avec la semi-norme spectrale de $A$, c'est-à-dire que $k\{T/r\}\to A$ est distingué. 
\end{proof}

\begin{theo}\label{theo-reduit-distingue}
Supposons que le corps $k$ est stable et soit $A$ une algèbre $k$-affinoïde. L'algèbre $A$ est distinguée si et seulement si elle est réduite.
\end{theo} 

\begin{proof}
Comme $\Gamma$ ne joue pas de rôle dans l'énoncé, on peut le supposer égal à $\R_{>0}$
(il va intervenir ci-dessous implicitement à travers les annéloïdes résiduels). 
On sait que si $A$ est distinguée, elle est réduite. Réciproquement, supposons $A$ réduite
et munissons-la de sa norme spectrale $\norm \cdot$. 
La réduction $\widetilde A$ est de type fini sur $\widetilde k$ ; par conséquent,
$\mathfrak d(\widetilde A_{\neq 0})$ 
est un monoïde de type fini sur $\mathfrak d(\widetilde k\gpm)$, ce
qui veut dire que le monoïde des valeurs non nulles de la norme spectrale de $A$
est de type fini sur $\abs{k\gpm}$.
Il existe donc un polyrayon $r=(r_1,\ldots, r_n)$
tel que $\widehat{k(T/r)}$ ne soit pas trivialement valué et tel que
$\norm A\subset \abs k\cdot r_1^\Z\cdot\ldots \cdot r_n^\Z$. 

Les lemmes \ref{lem-kr-schauder}
et 
\ref{lem-norm-ar} assurent qu'on  peut appliquer le lemme \ref{lem-distingue-changebase}
à l'extension $\widehat{k(T/r)}$ de $k$, ce qui signifie qu'on peut s'assurer du caractère distingué de $A$
après passage au corps $\widehat{k(T/r)}$ (notons que l'algèbre $A_{\widehat{k(T/r)}}$ est encore réduite d'après le 
lemme \ref{lem-norm-ar}, voir aussi la remarque \ref{rem-ar-reduit}). 
Par ailleurs, le corps
valué
 $\widehat{k(T/r)}$
est stable (\ref{rappel-kr-stable}), et $\norm {A_{\widehat{k(T/r)}}}=\norm A\cdot  r_1^\Z\cdot\ldots \cdot r_n^\Z$
par le lemme \ref{lem-norm-ar}.
Par conséquent on peut supposer, quitte à étendre les scalaires à $\widehat{k(T/r)}$, que la valeur absolue de $k$ n'est pas triviale et que $\norm A\subset \abs k$ (avec égalité si $A\neq \{0\}$), ce qui implique que $A$ est strictement $k$-affinoïde. 

Récapitulons : $A$ est une algèbre strictement $k$-affinoïde
réduite sur le corps $k$ qui est non trivialement valué et stable, et $\norm A\subset \abs k$. 
En vertu de \cite{bosch-g-r1984}, 6.4.3, Th. 1, ceci entraîne que $A$
est distinguée (et même $\{1\}$-distinguée). 
\end{proof}

Le lemme suivant est un cas particulier facile de résultats très généraux de descente
établis par Conrad et Temkin
dans leur prépublication \cite{conrad-tXXX}. 
Nous en donnons une preuve directe pour la commodité du lecteur. 

\begin{lemm}\label{lem-descente-imferme}
Soit $u\colon A\to B$ un morphisme d'algèbres $k$-affinoïdes, et soit $k^{\mathrm a}$ une
clôture algébrique de $k$. Si $u_{\alg k}\colon A_{\alg k}
\to B_{\alg k}$ est une surjection admissible
alors $u$ est une surjection admissible.
\end{lemm}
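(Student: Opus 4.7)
L'idée est que l'inclusion isométrique $k\hookrightarrow \alg k$ admet une rétraction $k$-linéaire bornée $\pi \colon \alg k \to k$ avec $\pi(1)=1$, et que cette rétraction s'étend, par produit tensoriel complété, en des rétractions compatibles $\pi_A \colon A_{\alg k}\to A$ et $\pi_B \colon B_{\alg k}\to B$ vérifiant la relation de commutation $u\circ\pi_A = \pi_B\circ u_{\alg k}$. Une fois ces objets en main, la conclusion sera essentiellement formelle.

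Pour construire $\pi$ : comme $\alg k$ est la complétion de $k^a = \bigcup_L L$ (réunion filtrante des sous-extensions finies), on choisit sur chaque $L/k$ finie une décomposition $k$-linéaire $L = k\oplus W_L$ fournissant une projection $\pi_L \colon L\to k$ fixant $1$ et bornée (automatique en dimension finie), et un argument d'extension transfinie (\emph{via} le lemme de Zorn) permet de rendre ces projections compatibles lorsque $L$ grandit dans $k^a$ ; on obtient ainsi une rétraction bornée $\pi \colon k^a\to k$, que l'on prolonge par continuité à $\alg k$. Posant alors $\pi_A = \mathrm{id}_A\hotimes\pi \colon A_{\alg k} = A\hotimes_k\alg k\to A$ et de même $\pi_B\colon B_{\alg k}\to B$, on dispose d'applications $k$-linéaires bornées qui restreignent à l'identité sur $A$ et $B$ respectivement ; la relation $u\circ\pi_A = \pi_B\circ u_{\alg k}$ se vérifie directement sur les tenseurs purs, puis s'étend par continuité à $A_{\alg k}$ tout entier.

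La conclusion est alors immédiate : étant donné $b\in B$, l'admissibilité de $u_{\alg k}$ avec constante $C>0$ fournit $\tilde a\in A_{\alg k}$ tel que $u_{\alg k}(\tilde a) = b$ et $\|\tilde a\|\leq C\|b\|$ ; en posant $a = \pi_A(\tilde a) \in A$ on a $\|a\|\leq \|\pi_A\|\cdot\|\tilde a\|\leq \|\pi_A\|\cdot C\|b\|$ et $u(a) = \pi_B(u_{\alg k}(\tilde a)) = \pi_B(b) = b$ (la dernière égalité résultant de $b\in B$), d'où à la fois la surjectivité de $u$ et son admissibilité. La principale difficulté se concentre dans la construction de la rétraction $\pi$ avec un contrôle uniforme de sa norme : dans le cas trivialement valué il suffit de prendre $\pi$ associée à une base $k$-linéaire de $\alg k$ contenant $1$, tandis que dans le cas non trivialement valué cela requiert un usage soigneux des bases de Schauder orthogonales -- dans l'esprit du lemme \ref{lem-kr-schauder} -- pour gérer les éventuels défauts de ramification dans les extensions finies de $k$ apparaissant dans $k^a$.
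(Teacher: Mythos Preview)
Votre approche repose entièrement sur l'existence d'une rétraction $k$-linéaire \emph{bornée} $\pi\colon\alg k\to k$, et c'est précisément ce point qui n'est pas établi. Pour chaque extension finie $L/k$ individuelle, une projection $\pi_L\colon L\to k$ avec $\pi_L(1)=1$ existe et est automatiquement bornée (dimension finie), mais votre argument de Zorn ne contrôle pas la norme de la limite~: pour que la projection obtenue sur $k^{\mathrm a}$ soit bornée (et se prolonge donc à $\alg k$), il faudrait un majorant \emph{uniforme} $\sup_L\|\pi_L\|<\infty$, ce que vous n'avez pas. L'allusion finale aux bases de Schauder orthogonales ne règle rien~: une extension finie $L/k$ n'admet pas de base orthogonale en général (c'est lié au défaut de $L/k$), et même l'existence de bases $\alpha$-orthogonales pour tout $\alpha<1$ ne suffit pas puisque l'itération transfinie ferait exploser la constante. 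Ce phénomène est apparenté à l'échec du théorème de Hahn-Banach pour les corps non sphériquement complets~; il n'est pas clair que la rétraction bornée existe pour un corps ultramétrique complet $k$ arbitraire.

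La démonstration de l'article contourne complètement cette difficulté par une voie conceptuellement différente~: elle utilise la théorie des réductions graduées pour montrer que $\widetilde B$ est entière sur $\widetilde A$ (via le fait que $\widetilde D$ est finie sur $\widetilde C$ par admissibilité de $u_{\alg k}$, et que $\widetilde B\hookrightarrow\widetilde D$), ce qui entraîne que $\mathscr M(B)\to\mathscr M(A)$ est sans bord, donc fini. Une fois que $B$ est un $A$-module fini, on a $B_{\alg k}=B\otimes_A A_{\alg k}$ (produit tensoriel algébrique) et la surjectivité de $u$ descend par fidèle platitude de $A_{\alg k}$ sur $A$~; l'admissibilité est alors automatique pour une surjection entre algèbres affinoïdes dont le but est un module fini sur la source. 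Cet argument n'a besoin d'aucune rétraction.
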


\begin{proof}
Comme $\Gamma$ n'apparaît pas dans l'énoncé, 
on peut le supposer égal à $\R_{>0}$. Posons $C=A_{\alg k}$
et $D=B_{\alg k}$. 
L'annéloïde $\widetilde C$ est
fini sur $\widetilde A\otimes_{\widetilde k}\widetilde{k^{\mathrm a}}$
d'après \cite{temkin2004}, Prop. 3.1 (v),
et $\widetilde A\to \widetilde C$ est injective
car
$A\to A_{\alg k}$ préserve les semi-normes spectrales
puisque $\mathscr M(A_{\alg k})\to \mathscr M(A)$ est surjectif. Le morphisme $\widetilde A\to \widetilde C$ est en conséquence une 
injection entière. De même, $\widetilde B\to \widetilde D$ est une injection entière. 
Par ailleurs, notre hypothèse sur $u_{\alg k}$ implique que $\widetilde D$ est finie sur $\widetilde C$, et \emph{a fortiori}
entière sur $\widetilde A$. Par conséquent, la sous-$\widetilde A$-algèbre $\widetilde B$ de $\widetilde D$ est entière sur $\widetilde 
A$ (et même fine sur $\widetilde A$ puisque $\widetilde B$ est de type fini sur $\widetilde k$). 
Cela signifie que $\mathscr M(B)\to \mathscr M(A)$
est sans bord (\cite{temkin2004}, prop. 3.4) ; comme il s'agit d'un morphisme entre espaces affinoïdes, il est fini
(\cite{berkovich1990}, Cor. 2.5.13). 
Dès lors $B$ est un $A$-module de Banach fini, et $D=B\otimes_AC$. 
Or $C$ est une $A$-algèbre fidèlement plate (\cite{berkovich1993}, 
Lemma 2.1.2) ; par conséquent,
la surjectivité de $u_{\alg k}$ entraîne celle de $u$. 
\end{proof}

\begin{lemm}\label{lem-univ-distingue}
Soit
$A$ une algèbre $k$-affinoïde $\Gamma$-stricte
géométriquement réduite, soit $r$ un
polyrayon constitué d'éléments de $\Gamma$ et soit $u\colon k\{T/r\}\to A$ un
morphisme surjectif. Soit $(a_1,\ldots, a_m)$
une famille fortement génératrice de $\mathrm{Ker}(u)$.
Les assertions suivantes sont équivalentes : 

\begin{enumerate}[i]
\item la $\widetilde k$-algèbre $\widetilde k[r\backslash T]/(\widetilde{a_1},\ldots, \widetilde{a_m})$
est géométriquement réduite ; 
\item pour toute extension complète $L$ de $k$, le morphisme $u_L$ est distingué ; 
\item le morphisme $u$ est distingué et il existe une extension complète algébriquement close
$L$ de $k$ telle que $u_L$ soit distingué ;
\item le morphisme $u$ est distingué et $\widetilde A$ est géométriquement réduite. 
\end{enumerate}
\end{lemm}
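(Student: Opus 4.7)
Mon plan est de traiter à part les implications immédiates et de concentrer l'effort sur un argument de changement de base qui servira à la fois pour $(i)\Rightarrow (ii)$ et $(iii)\Rightarrow (iv)$.

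L'équivalence $(i)\Leftrightarrow (iv)$ est une lecture directe de la proposition \ref{distingue-residu-reduit}, qui identifie les trois conditions «$u$ distingué», «$\widetilde A\simeq \widetilde k[r\backslash T]/(\widetilde{a_1},\ldots,\widetilde{a_n})$» et «$\widetilde k[r\backslash T]/(\widetilde{a_1},\ldots,\widetilde{a_n})$ réduite» ; il suffit d'y reconnaître la géométriquement réduite de cette algèbre quotient. L'implication $(ii)\Rightarrow (iii)$ est triviale en prenant par exemple $L=\widehat{k^{\mathrm a}}$.

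Le pivot des deux implications restantes est l'observation suivante : pour toute extension complète $L$ de $k$, la famille $(a_1,\ldots,a_n)$ engendre encore fortement $\mathrm{Ker}(u_L)$ dans $L\{T/r\}$. On l'obtient en traduisant la forte génération initiale comme la forte admissibilité du morphisme $\bigoplus_i k\{T/r\}(\norm{a_i})\to \mathrm{Ker}(u)\hookrightarrow k\{T/r\}$ entre modules de Banach $k\{T/r\}$-finis (la proposition \ref{prop-fort-gen} assure au passage la finitude et le caractère strictement fermé de $\mathrm{Ker}(u)$), puis en invoquant la proposition \ref{prop-fort-adm} qui préserve cette propriété par changement de base à $L$ ; ceci identifie aussi $\mathrm{Ker}(u_L)$ à l'idéal de $L\{T/r\}$ engendré par les $a_i$. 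On peut alors appliquer la proposition \ref{distingue-residu-reduit}(2) à $u_L$ muni de la famille $(a_1,\ldots,a_n)$ pour obtenir un isomorphisme canonique
\[\widetilde L[r\backslash T]/(\widetilde{a_1},\ldots,\widetilde{a_n})\simeq \widetilde{A_L}_{\mathrm q},\]
où $\widetilde{A_L}_{\mathrm q}$ désigne l'annéloïde résiduel de $A_L$ pour la norme quotient.

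Pour $(i)\Rightarrow (ii)$, j'utilise que $(i)$ entraîne via $(iv)$ la distinction de $u$, d'où $\widetilde A\simeq \widetilde k[r\backslash T]/(\widetilde{a_1},\ldots,\widetilde{a_n})$ ; l'observation préliminaire fournit alors $\widetilde{A_L}_{\mathrm q}\simeq \widetilde A\otimes_{\widetilde k}\widetilde L$, qui est réduit par géométrique réduite de $\widetilde A$, et la proposition \ref{distingue-residu-reduit}(3) conclut à la distinction de $u_L$. Pour $(iii)\Rightarrow (iv)$, je choisis $L$ complète algébriquement close telle que $u_L$ soit distingué ; alors $\widetilde{A_L}_{\mathrm q}=\widetilde{A_L}$, et le même raisonnement donne $\widetilde{A_L}\simeq \widetilde A\otimes_{\widetilde k}\widetilde L$. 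Or $\widetilde L$ est algébriquement clos (on relève depuis $L^\circ$ les racines de polynômes unitaires gradués, en utilisant la clôture algébrique de $L$), donc parfait, et la réduction de $\widetilde A\otimes_{\widetilde k}\widetilde L$ teste alors la géométrique réduite de $\widetilde A$ sur $\widetilde k$. L'obstacle principal est la vérification soigneuse que la proposition \ref{prop-fort-adm}, combinée au caractère fortement admissible de $u$, identifie correctement $\mathrm{Ker}(u_L)$ et garantit la forte génération après changement de base ; une fois cette étape technique réalisée, tout le reste se déduit mécaniquement des propositions \ref{distingue-residu-reduit} et \ref{prop-fort-adm}.
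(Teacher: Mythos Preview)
Your proof is correct and follows essentially the same approach as the paper's: both rest on the preservation of strong generation under base change (Proposition~\ref{prop-fort-adm}) and the equivalences of Proposition~\ref{distingue-residu-reduit}, yielding the key identification $\widetilde{A_L}\simeq \widetilde A\otimes_{\widetilde k}\widetilde L$ when $u$ and $u_L$ are both distinguished. The only difference is organizational: the paper runs the cycle $(i)\Rightarrow(ii)\Rightarrow(iii)\Rightarrow(iv)\Rightarrow(i)$ and invokes Proposition~\ref{distingue-residu-reduit} directly for $(i)\Rightarrow(ii)$ without passing through $(iv)$, whereas you first isolate the equivalence $(i)\Leftrightarrow(iv)$; your explicit remark that $\widetilde L$ is algebraically closed (hence perfect) is a useful clarification that the paper leaves implicit.
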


\begin{rema}\label{rem-univ-distingue}
Comme tout idéal de $k\{T/r\}$ admet une famille
fortement génératrice
(proposition \ref{prop-fort-gen}), les assertions (ii), (iii) et (iv) 
du lemme ci-dessus sont équivalentes pour tout morphisme surjectif
$u\colon k\{T/r\}\to A$.  
\end{rema}

\begin{proof}
Notons pour commencer que
la proposition \ref{prop-fort-adm}
assure que $(a_1,\ldots, a_m)$ est une famille fortement génératrice de $\mathrm{Ker}(u_L)$ pour toute extension
complète $L$ de $k$. 

L'implication (i)$\Rightarrow$(ii) découle de la proposition
\ref{distingue-residu-reduit}, et (ii)$\Rightarrow$(iii) est évidente. Supposons que (iii) soit vraie. Comme $u$ et $u_L$ sont distingués, 
la proposition \ref{distingue-residu-reduit} entraîne que les applications naturelles 
$\widetilde k[r\backslash T]/(\widetilde{a_1},\ldots, \widetilde{a_m})\to \widetilde A$
et $\widetilde L[r\backslash T]/(\widetilde{a_1},\ldots, \widetilde{a_m})\to \widetilde {A_L}$
sont des isomorphismes. Par conséquent le morphisme naturel
$\widetilde A\otimes_{\widetilde k}\widetilde L\to \widetilde{A_L}$ est un isomorphisme ; comme le 
but de cet isomorphisme est réduit, sa source l'est aussi et $\widetilde A$ est donc géométriquement réduite. 

Enfin supposons que (iv) soit vraie. Comme $u$ est distingué, la proposition \ref{distingue-residu-reduit}
assure que $\widetilde k[r\backslash T]/(\widetilde{a_1},\ldots, \widetilde{a_m})\to \widetilde A$ est un isomorphisme. 
Si $L$ est une extension complète de $k$ le quotient
$\widetilde L[r\backslash T]/(\widetilde{a_1},\ldots, \widetilde{a_m})$ est donc isomorphe à $\widetilde A\otimes_{\widetilde k}\widetilde L$
et est par conséquent réduit d'après nos hypothèses, d'où (i).
\end{proof}

\begin{defi}
Soit $A$ une algèbre $k$-affinoïde. Si $r$ est un polyrayon, 
un morphisme surjectif $u\colon k\{T/r\}\to A$ est dit \emph{universellement}
distingué si $u_L$ est distingué pour toute extension complète $L$ de
$k$.

L'algèbre $A$ est dite universellement distinguée (resp. universellement
$\Gamma$-distinguée) s'il existe un polyrayon $r$ (resp. un polyrayon $r$ constitué
d'éléments de $\Gamma$)
et une surjection 
universellement distinguée $u\colon k\{T/r\}\to A$. Si c'est le cas on peut toujours
faire en sorte que $\norm{u(T_i)}=r_i$ pour tout $i$, par
le même procédé que celui décrit au  \ref{distingue-bon-rayon} (resp. 
\ref{rem-distingue-bonrayon}).
\end{defi}

\begin{theo}\label{theo-affinoide-ponctuel}
Soit $A$ une algèbre $k$-affinoïde
$\Gamma$-stricte et géométriquement réduite. Il existe une extension finie $L$ de $k$,
qu'on peut prendre séparable si $\abs{k\gpm}\neq \{1\}$,
telle que $A_L$ soit universellement
$\Gamma^\Q$-distinguée. 
\end{theo}

\begin{proof}
Quitte à remplacer $\Gamma$ par $\Gamma^\Q$
(ce qui ne change pas la notion d'algèbre $\Gamma$-stricte) on
peut supposer que $\Gamma$ est divisible. Pour toute extension complète $F$ de $k$, l'algèbre $A_F$ est réduite
et on la considérera toujours comme munie de sa norme spectrale
qu'on notera $\|\cdot\|$
(cette notation n'est pas ambiguë car comme $\mathscr M(A_F)\to \mathscr M(A)$ est surjective, $A\hookrightarrow A_F$ préserve les normes spectrales).
Soit $k^{\mathrm a}$ une clôture algébrique de $k$. Le corps $\alg k$ est stable, et l'algèbre
$A_{\alg k}$ est réduite. Le théorème \ref{theo-reduit-distingue}
assure alors que que $A_{\alg k}$ est distinguée. Il existe donc un polyrayon $r=(r_1,\ldots, r_n)$ 
et une surjection distinguée $v\colon \alg k\{T/r\}\to A_{\alg k}$. Puisque
$A$
est $\Gamma$-stricte et $\Gamma$ divisible, on a 
$\norm{A_{\alg k}}\subset \abs{\alg k}\cdot \Gamma$ ; 
en vertu de la remarque
\ref{rem-distingue-bonrayon} on peut donc supposer que chacun des $r_i$ appartient à $\Gamma$. 
Il résulte par ailleurs de  \ref{distingue-bon-rayon}
que
quitte à perturber un peu $v$, on peut supposer qu'il existe une extension finie $L$ de $k$, et même
finie séparable si $\abs{k\gpm}\neq \{1\}$,
telle que $v(T_i)\in A_L$ pour tout $i$. 
Soit $J$ le noyau de $v$. La proposition \ref{prop-fort-gen}
assure que $J$ possède une famille finie fortement génératrice $(a_1,\ldots, a_m)$, 
et il découle de \ref{def-fortgen}
qu'on peut supposer, quitte à agrandir $L$ et à perturber
un peu les $a_i$, que
les $a_i$ appartiennent à $L\{T/r\}$. 

Notons $u$ le morphisme $L\{T/r\}\to A_L$ qui envoie $T_i$ sur $v(T_i)$ pour tout $i$, et $I$
l'idéal $(a_1,\ldots, a_m)$ de $L\{T/r\}$. On a par construction $v=u_{\alg k}$ et $J=I_{\alg k}$. 
Comme $v$ est une surjection admissible, le lemme \ref{lem-descente-imferme}
assure que $u$ est une surjection admissible. 
Puisque $v=u_{\alg k}$
on a $\mathrm{Ker}(v)=\mathrm{Ker}(u)_{\alg k}$, et
on a d'autre part $\mathrm{Ker}(v)=J=I_{\alg k}$. 
Par fidèle platitude de $A_{\alg k}$
sur $A_L$
(\cite{berkovich1993}, Lemma 2.1.2), il vient 
$\mathrm{Ker}(u)=I$.
Comme $(a_1,\ldots, a_m)$ engendre fortement $J$, 
il résulte de la proposition \ref{prop-fort-adm}
(et de la fin de \ref{def-fortadm}) que 
$(a_1,\ldots, a_m)$ engendre fortement $I$. Le morphisme
$v$ étant distingué, la proposition \ref{distingue-residu-reduit}
entraîne que $\widetilde {k^a}[T/r]/(\widetilde {a_1},\ldots, \widetilde {a_r})$ est réduite.
Il résulte alors du lemme \ref{lem-univ-distingue}
que $u$ est universellement distingué. En conséquence, $A_L$
est universellement distinguée. 
\end{proof}

\begin{rema}
On ne peut pas espérer que l'énoncé du théorème ci-dessus vaille
en général avec $\Gamma$ à la place de $\Gamma^\Q$. En effet supposons
qu'il
existe un élément $r$ appartenant à $\Gamma^\Q\setminus (\Gamma\cdot \abs{k\gpm}^\Q)$. Pour toute extension finie $L$ de $k$
on a
alors\[\abs{L_r\gpm}=\abs{L\gpm}\cdot r^\Z\not\subset \abs{L\gpm}\cdot\Gamma,\]
si bien que $L_r$ n'est jamais $\Gamma$-distinguée. En revanche, la
présentation canonique $k\{T/r, rS\}/(ST-1)\simeq k_r$ est universellement $\Gamma^{1/N}$-distinguée, où $N$ est l'ordre de $r$ modulo $\Gamma$. 

\end{rema}

\begin{prop}\label{prop-univdistingue-1}
Soit $A$ une algèbre $k$-affinoïde $\Gamma$-stricte géométriquement réduite,
soit $r$ un polyrayon constitué
d'éléments de $\Gamma$ et soit $u\colon k\{T/r\}\to A$
une surjection admissible.  
On note $I$ le noyau de $u$. 
Soit $(a_1,\ldots, a_m)$ une famille génératrice de $I$
constituée d'éléments non nuls
et telle que pour toute extension complète $L$
de $k$, les propriétés suivantes soient satisfaites : 

\begin{enumerate}[a]
\item $\widetilde L[r\backslash T]/(\widetilde{a_1},\ldots
\widetilde{a_m})$ est réduite ; 
\item la flèche naturelle
$\widetilde L[r\backslash T]/(\widetilde{a_1},\ldots
\widetilde{a_m})\to \widetilde{(A_L)}$ est injective. 
\end{enumerate}
La famille $(a_i)$ engendre alors fortement $I$, et $u$ est universellement
distingué. 
\end{prop}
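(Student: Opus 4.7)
The plan is to use the fortement generating family of $I$ provided by Proposition~\ref{prop-fort-gen}, together with hypotheses (a) and (b), to identify $B:=\widetilde k[r\backslash T]/(\widetilde{a_1},\ldots,\widetilde{a_m})$ with the residual $\widetilde A_{\mathrm q}$ of $A$ for the quotient norm $\|\cdot\|_{\mathrm q}$. This identification, combined with the reducedness of $B$, will simultaneously force $u$ to be universally distinguished and allow us to transfer the fortement generating character from the auxiliary family to $(a_i)$.

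Since $(a_i)$ generates $I$, Proposition~\ref{prop-fort-gen} ensures that $I$ is strongly closed in $k\{T/r\}$, so $\|\cdot\|_{\mathrm q}$ is well-defined; let $(b_1,\ldots,b_n)$ be a fortement generating family of $I$ provided by the same proposition, and recall from Proposition~\ref{distingue-residu-reduit}(2) the isomorphism $\widetilde k[r\backslash T]/(\widetilde{b_j})\simeq \widetilde A_{\mathrm q}$. As each $a_i$ lies in $I$, the classes $\widetilde{a_i}$ lie in $(\widetilde{b_j})$, so the surjection $\widetilde k[r\backslash T]\twoheadrightarrow \widetilde A_{\mathrm q}$ factors through a surjection $B\twoheadrightarrow \widetilde A_{\mathrm q}$. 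Composing it with the natural comparison $\widetilde A_{\mathrm q}\to \widetilde A$ (well-defined because $\|\cdot\|\leq\|\cdot\|_{\mathrm q}$ on $A$) recovers the natural map $B\to \widetilde A$, which is injective by hypothesis (b); hence $B\to \widetilde A_{\mathrm q}$ is an isomorphism. Hypothesis (a) then yields $\widetilde A_{\mathrm q}$ reduced, which forces the quotient norm to be power-multiplicative and therefore equal to the spectral norm on $A$: this is exactly the condition that $u$ be distinguished. Running the same argument after base change to an arbitrary complete extension $L$ of $k$---legitimate because $(a_i)$ still generates $\ker u_L$ (finitely generated, hence closed, by Proposition~\ref{prop-fort-gen}) and (a)--(b) hold for $L$ by assumption---shows that $u$ is universally distinguished.

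The isomorphism $B\simeq \widetilde A_{\mathrm q}$ yields the equality $(\widetilde{a_i})=(\widetilde{b_j})$ of ideals in $\widetilde k[r\backslash T]$. Lifting, for each $j$, a homogeneous expression of $\widetilde{b_j}$ in the $\widetilde{a_i}$ produces a decomposition $b_j=\sum_i c_{ji}a_i+y_j$ with $\|c_{ji}\|\cdot\|a_i\|\leq \|b_j\|$ and $\|y_j\|<\|b_j\|$; as $j$ ranges over a finite set, the number $\epsilon:=\max_j\|y_j\|/\|b_j\|$ is strictly less than $1$. For $f\in I$, I would then combine this with the fortement generating property of $(b_j)$: writing $f=\sum_j d_jb_j$ with $\|d_j\|\cdot\|b_j\|\leq \|f\|$ and substituting yields $f=\sum_i e_i^{(0)}a_i+g^{(1)}$ with $\|e_i^{(0)}\|\cdot\|a_i\|\leq \|f\|$, $g^{(1)}\in I$ and $\|g^{(1)}\|\leq \epsilon\|f\|$. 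Iterating this construction on $g^{(1)}$ and exploiting the ultrametric completeness of $k\{T/r\}$ produces, by geometric convergence, a representation $f=\sum_i e_ia_i$ with $\|e_i\|\cdot\|a_i\|\leq \|f\|$, proving that $(a_i)$ fortement generates $I$.

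The main point to watch is convergence of the iteration in the last paragraph: a naive residue-matching iteration need not converge, since the value group of $k\{T/r\}$ may be dense in $\R_{>0}^\times$ and a strictly decreasing sequence of norms may accumulate at a positive value. The trick is to replace the naive iteration by the one driven by the finite auxiliary family $(b_j)$, which furnishes a \emph{fixed} contraction factor $\epsilon<1$ and hence genuine geometric convergence.
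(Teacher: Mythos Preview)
Your proof is correct. A small technicality: to invoke Proposition~\ref{distingue-residu-reduit} for the auxiliary family $(b_j)$, you need $\|b_j\|\in\Gamma$, whereas Proposition~\ref{prop-fort-gen} only guarantees $\|b_j\|\in\abs{k^\times}\cdot\Gamma$; but this is harmless, since you may rescale each $b_j$ by a scalar in $k^\times$ without affecting strong generation.

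Your route differs genuinely from the paper's. The paper first observes that universal distinguishedness follows from strong generation via Proposition~\ref{prop-fort-adm} and Proposition~\ref{distingue-residu-reduit}, and then proves strong generation by a base change to an algebraically closed field with $\Gamma\subset\abs{k^\times}$, followed by a normalization to the strictly affinoid case $r=1$, $\|a_i\|=1$; there it works on the $k^\circ$-level, uses the finiteness of $J=\ker(k^\circ\{T\}\to A)$ (an external input from \cite{frg1}), and applies Nakayama to $J/I$. You instead stay over $k$ and in the graded setting throughout: you identify $B\simeq\widetilde A_{\mathrm q}$ directly from hypothesis~(b), read off both the distinguishedness of $u$ and the equality $(\widetilde{a_i})=(\widetilde{b_j})$, and then run a geometric-contraction iteration with the fixed ratio $\epsilon<1$ coming from the finite family $(b_j)$. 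Your argument is more elementary and self-contained---no reduction to the strict case, no appeal to \cite{frg1}---while the paper's argument makes the connection with integral models explicit, which is thematically closer to the formal-scheme viewpoint used elsewhere in the article.
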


\begin{proof}
Soit $\norm \cdot$ la norme spectrale de $A$.
Il suffit de montrer que la famille $(a_i)$ engendre fortement $I$. Il découlera alors de 
 la proposition \ref{prop-fort-adm}, de l'hypothèse (a) et de la proposition \ref{distingue-residu-reduit}
que $u_L$ est distingué pour toute extension complète $L$ de $k$, et donc
que $u$ est universellement distingué. 

Par ailleurs pour montrer que la famille $(a_i)$ engendre fortement $I$, il suffit de le faire après
une extension quelconque des scalaires (prop.  \ref{prop-fort-adm}). Comme nos hypothèses sont elles-mêmes invariantes
par une telle extension, on peut supposer que $k$ est algébriquement clos, et que $\Gamma\subset \abs{k\gpm}\neq \{1\}$ (l'algèbre $A$ est alors strictement $k$-affinoïde).
Cette dernière hypothèse permet
de se ramener aussitôt par renormalisation des indéterminées au cas où $r=1$. Comme on peut multiplier chacun des $a_i$ par un scalaire
sans changer le caractère fortement générateur éventuel de $(a_i)$ ni nos hypothèses sur l'algèbre $\widetilde k[r\backslash T]/(\widetilde{a_1},\ldots
\widetilde{a_m})$, on peut 
supposer que $\|a_i\|=1$ pour tout $i$.

Soit $J$ le noyau de $k^\circ\{T\}\to A$.
Pour que la famille $(a_1,\ldots, a_m)$ engendre fortement $I$, 
il faut et il suffit qu'elle engendre 
$J$ sur $k^\circ\{T\}$.  La condition énoncée est en effet clairement nécessaire par définition d'une famille fortement
génératrice. Réciproquement, supposons que $(a_1,\ldots, a_m)$ engendre $J$ sur $k^\circ\{T\}$
et soit $a\in k\{T\}$ ; montrons qu'on peut écrire $a$ sous la forme $\sum \alpha_i a_i$ avec $\norm{\alpha_i}\leq \norm a$ pour
tout  $i$. C'est évident si $a=0$ ; sinon $\norm a \in \abs{k\gpm}$, si bien que quitte à diviser $a$ par un scalaire convenable, 
on peut supposer que $\norm a =1$ ; mais l'assertion requise découle
alors directement du fait que  $(a_1,\ldots, a_m)$ engendre $J$ sur $k^\circ\{T\}$.

Il suffit donc pour conclure de démontrer
que $(a_1,\ldots, a_m)$ engendre $J$ sur $k^\circ\{T\}$. 
Comme 
$A^\circ$ est plat
sur $k^\circ$, l'idéal $J$ de $k^\circ\{T\}$
est de type fini, \emph{cf.}
\cite{frg1}, Prop. 1.1 (c).
Soit $b$ un élément non nul de $J$ et soit $\mu$ un élément
de $k^\circ$
tel que $\|b\|=\abs \mu$. L'image de $b/\mu$ dans $A$ est nulle, et
l'image de $b/\mu$ dans $\widetilde k^1[T]/(\widetilde{a_1},\ldots
\widetilde{a_m})$ est donc également nulle
par injectivité du morphisme
$\widetilde k[T]/(\widetilde{a_1},\ldots
\widetilde{a_m})\to \widetilde A$.
Il existe donc $\lambda_1,\ldots, \lambda_m,\nu$ dans $k^\circ\{T\}$ et $\epsilon$
dans $k^{\circ \circ}$ tels
que $b/\mu=\sum_i \lambda_ia_i+\epsilon\nu$. Comme $u(a_i)=0$ pour tout $i$, il vient $\epsilon u(\nu)=0$ et donc $\epsilon=0$ ou $u(\nu)=0$, c'est-à-dire $\nu\in J$ ; dans les deux cas on a
$b\in I+k^{\circ \circ}J$.  
Le $k^\circ\{T\}$-module de type fini $J/I$ satisfait donc l'égalité $J/I=k^{\circ \circ}J/I$. Le lemme de Nakayama assure alors
l'existence d'un élément $\omega \in k^{\circ \circ}\{T\}$ tel que $(1+\omega)(J/I)=0$. Puisque $1+\omega$ est inversible dans $k^\circ\{T\}$
il vient $J=I$. 
\end{proof}

\subsection{Un critère utile}\label{ss-critere-utile}
Reprenons les notations de la proposition \ref{prop-univdistingue-1}.
Nous allons énoncer un critère pour que les hypothèses
de celle-ci
soient satisfaites, auquel nous ferons appel lors de la preuve du théorème principal. 

Supposons que
la $\widetilde k$-algèbre
$\widetilde k[r\backslash T]/(\widetilde{a_1},\ldots
\widetilde{a_m})$
est géométriquement réduite. Soit $L$ une extension
complète de $k$. Elle satisfait alors l'hypothèse (a). Par ailleurs,
le morphisme
naturel de $\widetilde L[r\backslash T]/(\widetilde{a_1},\ldots
\widetilde{a_m})$ vers $\widetilde{(A_L)}$
est entier d'après
\cite{temkin2004}, Prop. 3.1 (v), et il induit donc un morphisme fermé
de $\spec \widetilde{(A_L)}$
vers $\spec \widetilde L[r\backslash T]/(\widetilde{a_1},\ldots
\widetilde{a_m})$. Il en
résulte, l'annéloïde
$\widetilde L[r\backslash T]/(\widetilde{a_1},\ldots
\widetilde{a_m})$ étant réduit,  que
$\widetilde L[r\backslash T]/(\widetilde{a_1},\ldots
\widetilde{a_m})$ s'injecte dans $ \widetilde{(A_L)}$
si et seulement si
$\spec \widetilde{(A_L)}$
se surjecte sur
$ \spec\widetilde L [r\backslash T]/(\widetilde{a_1},\ldots
\widetilde{a_m})$. 

Pour que cette dernière condition soit satisfaite, nous allons montrer qu'il suffit que
le schémoïde
$\mathsf X:=
\spec \widetilde k[r\backslash T]/(\widetilde{a_1},\ldots,
\widetilde{a_m})$
possède les deux propriétés
suivantes, en notant $\mathsf E$ l'ensemble
de ses points maximaux : 
\begin{itemize}
\item [$(\alpha)$] pour tout $\xi \in \mathsf E$, la composante irréductible $\overline{\{\xi\}}$
de $\mathsf X$ est géométriquement irréductible ; 
\item [$(\beta)$] pour tout $\xi \in \mathsf E$,
il existe un élément $f$ non nul de $k\{T/r\}$ tel que
$\widetilde f(\eta)=0$ pour tout $\eta\in \mathsf E\setminus \{\xi\}$, un domaine 
affinoïde
$V$ de $\mathscr M(A)$ purement de dimension $d:=\dim \overline{\{\xi\}}$, 
et un point $v$ de $V$ tel que
$\abs{u(f)(v)}=\norm f$. 
\end{itemize}
Supposons en effet que ce soit le cas, et montrons que chacun des points maximaux de
$\mathsf X_{\widetilde L}$ appartient à l'image de $\spec \widetilde {A_L}$ ; cela permettra de conclure, puisque 
la flèche $\spec \widetilde{(A_L)}
\to \mathsf X_{\widetilde L}$
est fermée. 

Soit donc $\zeta$ un point maximal de $\mathsf X_{\widetilde L}$ et soit $\xi$ son image sur $\mathsf X$ ; l'hypothèse $(\alpha)$ assure
que $\zeta$ est l'unique antécédent de $\xi$ sur $\mathsf X_{\widetilde L}$. Soit $W$ le domaine affinoïde de $\mathscr M(A_L)$ défini par l'égalité $\abs{u(f)}=\norm f$. L'hypothèse $(\beta)$ assure que $W$ rencontre $V_L$, qui est purement de dimension $d$ ; en particulier, $W$ rencontre au moins une composante irréductible de dimension $d$ de $\mathscr M(A_L)$, si bien que l'ouvert $\widetilde W$ de 
$\spec \widetilde{A_L}$ rencontre au moins une composante irréductible de dimension $d$ de ce dernier
(lemme \ref{lem-dim-xtilde}) ; soit $\omega$ son point générique.
Par définition de $W$, la fonction $\widetilde{u(f)}$ ne s'annule pas en $\omega$ ; par conséquent, l'image de $\omega$ sur $\mathsf X$ n'appartient pas au lieu des zéros de $\widetilde f$, lequel contient
$\overline {\{\eta\}}$ pour tout point maximal $\eta$ différent de $\xi$
(en vertu de l'hypothèse $(\beta)$). 
Il s'ensuit que l'image de $\omega$ sur $\mathsf X_L$ est située sur $\overline {\{\zeta\}}$. Comme $\spec \widetilde {A_L}\to \mathsf X_L$
est fini et comme $\omega$ est le point générique d'une composante irréductible de dimension $d$, on en déduit que l'image de $\omega$ sur $\mathsf X_L$
est égale à $\zeta$.

\section{Le théorème principal}

\subsection{}
Nous suivons toujours les conventions générales
décrite aux paragraphes \ref{ss-conv-gen} \emph{et sq.}

\subsubsection{}
Nous ferons un grand usage dans ce qui va suivre de la théorie
de la réduction des germes d'espaces analytiques
introduite par Temkin dans \cite{temkin2004}, et plus précisément
de sa déclinaison $\Gamma$-stricte développée au chapitre 3 de \cite{ducros2018} (dans lequel nous décrivons en détail la 
théorie).

Rappelons simplement ici quelques notations utiles.
Si $F\hookrightarrow L$ est une extension de corpoïdes $\Gamma$-gradués, on note $\mathbf P_{L/F}$ l'ensemble
des (classes d'équivalence de) valuations sur $L$ qui sont triviales sur $F$. Pour toute partie $E$ de $L$, on note $\P_{L/F}\{E\}$ le sous-ensemble de $\P_{L/F}$ formé des valuations dont l'annéloïde contient $E$. Les $\P_{L/F}\{E\}$ 
pour $E$ fini engendrent une topologie dont on munit $\P_{L/F}$; ce dernier est alors quasi-compact.

Si $(X,x)$ est un germe d'espace $k$-analytique $\Gamma$-strict, nous noterons $\widetilde{(X,x)}$ la «partie $\Gamma$-stricte» 
de sa réduction à la Temkin, qui est notée $\widetilde{(X,x)}^\Gamma$ dans \cite{ducros2018} (voir les paragraphes 3.5.3 et 3.5.9 de \emph{op. cit.}
pour sa définition et ses principales propriétés). C'est un ouvert quasi-compact et non vide de $\P_{\hrt y/\widetilde k}$. 

\subsubsection{}
Nous utiliserons également la notion de platitude en géométrie analytique telle qu'elle est introduite et étudiée dans \cite{ducros2018} (voir notamment le chapitre 4 de \emph{op. cit.}). 

Nous nous servirons à plusieurs reprises du résultat suivant : si $p\colon Y\to X$ est un morphisme plat entre espaces $k$-analytiques compacts et $\Gamma$-stricts alors $p(Y)$ est un domaine analytique (compact) et $\Gamma$-strict de $X$. Lorsque $\Gamma=\{1\}$ c'est un résultat de Raynaud (\emph{cf.}
\cite{frg2}, Cor. 5.11). On trouvera dans 
 \cite{ducros2018} une démonstration du cas général par réduction au cas où $\Gamma=\{1\}$, ainsi qu'une nouvelle preuve de ce dernier sans modèles formels (ils sont remplacés par la réduction
 des germes d'espaces analytiques) ; \emph{cf. op. cit.}, théorèmes 9.1.3 et 9.2.1.
 
 Nous aurons également
 besoin d'un énoncé technique sur le lieu d'équidimensionalité d'un morphisme plat, que nous démontrons en appendice
 (théorème \ref{theo-dimrel-s1}).

\begin{defi}\label{def-gamma-symp}
Soit $A\to B$ un morphisme entre algèbres $k$-affinoïdes $\Gamma$-strictes ; posons $X=\mathscr M(A)$ et $Y=\mathscr M(B)$. 
Une présentation $B\simeq A\{T/r\}/(a_1,\ldots, a_m)$
de $B$ sur $A$ (ou de $Y$ sur $X$) est dite
\emph{$\Gamma$-sympathique}
si elle possède les propriétés suivantes, en notant
$D$ le polydisque relatif $\mathscr M(A\{T/r\})$ :

\begin{enumerate}[1]
\item le polyrayon $r$ est constitué d'éléments de $\Gamma$ ; 
\item pour tout $i$ compris entre $1$ et $m$ la
semi-norme spectrale $\rho_i$
de $a_i$ appartient à 
 $\Gamma$, et pour tout $x\in X$ la norme 
spectrale de $a_i|_{D_x}$  est encore égale à $\rho_i$ ; 
\item pour tout point $x$ de $\mathscr M(A)$,
la famille $(a_1|_{D_x},\ldots, a_m|_{D_x})$ est une famille fortement
génératrice de l'idéal qu'elle engendre dans $\hr x\{T/r\}$ ; 
\item le morphisme
naturel $p\colon \spec \widetilde A[r\backslash T]/(\widetilde{a_1}, \ldots, \widetilde {a_m})\to \widetilde X$ est plat et à fibres 
géométriquement réduites ; 
\item les composantes irréductibles des fibres de $p$ sont géométriquement irréductibles ; 
\item il existe un recouvrement ouvert fini
$(\Omega_i)$ de  $\spec \widetilde A[r\backslash T]/(\widetilde{a_1}, \ldots, \widetilde {a_m})$ tel que pour tout $i$ et tout $\xi \in \widetilde X$ l'intersection $p\inv(\xi)\cap \Omega_i$ soit ou bien vide, ou bien une composante connexe de $p\inv(\xi)$. 
\end{enumerate}
\end{defi}

\subsection{}
Soient $A$ et $B$ comme ci-dessus, soit $B\simeq A\{T/r\}/(a_1,\ldots, a_m)$ une présentation
$\Gamma$-sympathique de $B$ et soit $D$ le polydisque relatif
$\mathscr M(A\{T/r\})$. 

\subsubsection{}\label{sss-avantages-sympathique}
Soit $L$ une extension complète de $k$ et soit $x$ un $L$-point
de $\mathscr M(A)$. Il découle découle alors des conditions (1) à (4)
de la définition d'une présentation sympathique, de la proposition \ref{prop-fort-adm} couplée à la 
fin de \ref{def-fortadm}, ,
de la proposition \ref{distingue-residu-reduit}
et du lemme \ref{lem-univ-distingue}
que $(a_1|_{D_x},\ldots, a_m|_{D_x})$ est une famille
fortement
génératrice de l'idéal de $L\{T/r\}$ qu'elle engendre,
que la surjection $L\{T/r\}\to L\{T/r\}/(a_1|_{D_x}\ldots, a_m|_{D_x})$ est universellement
distinguée, 
et que $\widetilde {Y_x}\to \spec \widetilde L[r\backslash T]/(\widetilde{a_1|_{D_x}}, \ldots, 
\widetilde{a_m|_{D_x}})$
est un isomorphisme. 
Une présentation $\Gamma$-sympathique 
d'un morphisme entre espaces $k$-affinoïdes
$\Gamma$-stricts
fournit ainsi entre 
autres une description uniforme et universelle des
réductions (graduées) de ses fibres.

\subsubsection{}
Soit $C$ une algèbre affinoïde $\Gamma$-stricte sur une extension complète de $k$ et soit $A\to C$ un morphisme ; rappelons 
qu'on désigne par $B_C$ l'algèbre $B\hotimes_AC$. 
L'isomorphisme $B_C\simeq C\{T/r\}/(a_1,\ldots, a_m)$ est 
alors une présentation $\Gamma$-sympathique de $B_C$ sur $C$. 
C'est en effet une conséquence de la stabilité des propriétés (1) à (6) de la définition par extension des scalaires, stabilité qui se voit comme suit :
elle est évidente en ce qui concerne (1), (2), (4) et (5) ;  en ce qui concerne (3), cela a été expliqué
en \ref{sss-avantages-sympathique} ;
et en ce qui concerne (6),
elle est immédiate une fois remarqué qu'en vertu de (5), les composantes connexes des fibres de $p$ sont géométriquement connexes.

\subsubsection{}
Si $\Gamma=\{1\}$
et si $A$ est distinguée
(ce qui est par exemple le cas dès que $k$ est algébriquement clos
et que $A$ est réduite), on peut déduire de la présentation $\{1\}$-sympathique de $B$ sur $A$ que nous nous sommes donnée
un modèle formel plat et
à fibres réduites de $\mathscr M(B)\to \mathscr M(A)$, comme en atteste la proposition suivante. 

\begin{prop}\label{prop-retrouve-fibred}
Supposons que $\Gamma=\{1\}$ (ce qui entraîne que $k$ n'est pas trivialement
valué). Soit $A\to B$ un morphisme
entre algèbres strictement $k$-affinoïdes, 
et soit 
$B\simeq A\{T\}/(a_1,\ldots, a_m)$
une présentation $\{1\}$-sympathique
de $B$ sur $A$. 
Si $A$ est $\{1\}$-distinguée il existe $b_1,\ldots, b_\ell$
appartenant à $A^\circ\{T\}$
tels que le morphisme 
\[q\colon \mathrm{Spf}\;(A^\circ\{T\}/(a_1,\ldots, a_m,b_1,\ldots, b_\ell))\to \mathrm{Spf}\;A^\circ\]
possède les propriétés suivantes : 
\begin{enumerate}[a]
\item la fibre générique de $q$
s'identifie
à $\mathscr M(B)\to \mathscr M(A)$ ;
\item $q$ est plat ; 
\item le morphisme induit par $q$ entre les fibres spéciales
de ses source et but 
s'identifie
à
$\spec \widetilde A[T]/(\widetilde{a_1},\ldots, \widetilde{a_m})\to \spec \widetilde A$ ;
\item les fibres de $q$ sont géométriquement réduites ; 
\item les  composantes irréductibles des fibres de
$q$ sont géométriquement irréductibles ; 
\item il existe un recouvrement ouvert fini $(\Omega_i)$ de
$\mathrm{Spf}\;(A^\circ\{T\}/(a_1,\ldots, a_m,b_1,\ldots, b_\ell))$
tel que pour tout indice $i$, chaque fibre de $q|_{\Omega_i}$
soit ou bien vide, ou bien
une composante connexe de la fibre correspondante de $q$. 
\end{enumerate}
\end{prop}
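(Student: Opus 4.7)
The plan is to take $C := A^\circ\{T\}/(a_1, \ldots, a_m)$ as the natural candidate model and to show that, after killing a finitely generated ideal of $\pi$-power torsion by adjoining finitely many extra relations $b_1, \ldots, b_\ell$, the resulting formal scheme over $\mathrm{Spf}\, A^\circ$ possesses all the required properties. First I would set up the identification: since $\rho_i = \|a_i\|_{\mathrm{sp}}$ lies in $\Gamma = \{1\}$, each $a_i$ belongs to $A^\circ\{T\}$ with spectral norm one, and the $\{1\}$-distinguished hypothesis on $A$ gives that $A^\circ$ is admissible (topologically finitely presented and $k^\circ$-flat) with $A^\circ/\pi A^\circ = \widetilde A$, where $\pi$ is a uniformizer of $k$. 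Consequently $C \otimes_{A^\circ} A = B$, while $C/\pi C$ is canonically isomorphic to $R := \widetilde A[T]/(\widetilde a_1, \ldots, \widetilde a_m)$, the scheme of the sympathetic hypothesis, which by conditions (4)--(6) is flat over $\widetilde A$ with geometrically reduced, geometrically irreducible fibers, and admits the covering $(\Omega_j)$.

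Next I would kill the $\pi$-power torsion. By the finite generation theorem for $\pi$-torsion in topologically finitely presented algebras over an admissible base (\cite{frg1}, Prop.~1.1), the ideal $\mathfrak a \subset C$ of $\pi$-power torsion is finitely generated; I pick lifts $b_1, \ldots, b_\ell \in A^\circ\{T\}$ of its generators and set $C' := C/\mathfrak a = A^\circ\{T\}/(a_i, b_j)$. Property (a) is immediate since $\mathfrak a \otimes_{A^\circ} A = 0$. The central technical step is to show that $\mathfrak a \subset \pi C$, that is, $C'/\pi C' = R$: given $x \in C$ with $\pi x = 0$ and a lift $\hat x \in A^\circ\{T\}$, one has $\pi \hat x \in (a_i) \cdot A^\circ\{T\}$, whence $\hat x \in (a_i) \cdot A\{T\}$. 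The pointwise strong generation (condition (3) of the sympathetic presentation) yields at each $y \in X$ a writing $\hat x|_{D_y} = \sum \nu_i^{(y)} a_i|_{D_y}$ with $\|\nu_i^{(y)}\| \leq \|\hat x|_{D_y}\| \leq 1$; a patching argument that combines the strong-generation structure with the admissibility of $A^\circ$ then produces global coefficients $\nu_i \in A^\circ\{T\}$ with $\hat x = \sum \nu_i a_i$, so that $x = 0$ already in $C$ and in particular $\mathfrak a \subset \pi C$.

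Once $C'/\pi C' = R$ is established, property (b) follows from the fiber criterion of flatness (EGA IV.11.3.10) applied to the $\pi$-torsion-free $C'$ over $A^\circ$, combined with the flatness of $R$ over $\widetilde A$ from sympathetic (4). Properties (c) and (d) are then inherited directly from the corresponding sympathetic conditions through the identification of the closed fibers of $q$ with the fibers of the morphism $p$. For (e), I would lift the cover $(\Omega_j)$ of $\mathrm{Spec}\, R$ provided by sympathetic (6) to an open cover of $\mathrm{Spf}\, C'$: since the underlying topological space of a formal scheme coincides with that of its closed fiber, the lifts are uniquely determined and preserve the connected-component structure of the fibers.

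The hard part will be the patching argument of the preceding paragraph, namely, showing that pointwise strong generation of $(a_i)$ in the fibers $\hr y\{T\}$ implies global strong generation in $A^\circ\{T\}$. This requires carefully combining the sympathetic conditions (2) and (3) with the $\{1\}$-distinguished nature of $A^\circ$ in order to globalize coefficients that are \emph{a priori} defined only fibrewise; all the remaining verifications are essentially formal, given this key ingredient together with the standard admissibility machinery and the finiteness of $\pi$-torsion.
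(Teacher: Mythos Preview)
Your overall architecture matches the paper's proof closely: kill the $k^\circ$-torsion using Proposition 1.1 of \cite{frg1}, identify the special fibre of the resulting formal scheme with $\widetilde A[T]/(\widetilde{a_1},\ldots,\widetilde{a_m})$, deduce (c), (d), (e) from the sympathetic conditions, and obtain (b) from the fibre criterion of flatness combined with Lemma 1.6 of \cite{frg1}. Two points deserve correction.

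First, a minor one: you assume a uniformizer $\pi$, but $k$ is not assumed discretely valued. You should work with the ideal $k^{\circ\circ}$ throughout; the cited results from \cite{frg1} and the flatness criterion in \cite{ega43} are stated in that generality.

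Second, and this is the real gap: your ``patching argument'' for the key identification $C'/\mathfrak m C' \simeq R$ is not a viable strategy, and the fact that its conclusion is too strong should already make you suspicious. You claim that fibrewise strong generation of $(a_i)$ yields \emph{global} coefficients $\nu_i\in A^\circ\{T\}$ with $\hat x=\sum\nu_i a_i$; this would show that $C$ has no $k^\circ$-torsion at all, making the $b_j$ superfluous. But there is no mechanism to glue the fibrewise decompositions $\hat x|_{D_y}=\sum\nu_i^{(y)}a_i|_{D_y}$ into a global one: the $\nu_i^{(y)}$ are neither unique nor subject to any compatibility as $y$ varies. The paper avoids this entirely by arguing on the reductions rather than on the lifts. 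For each point $\xi\in\spec\widetilde A$, choose $x\in\mathscr M(A)$ specialising to $\xi$; the fibrewise strong generation at $x$ shows that $b_j|_{D_x}$ lies in the ideal $(a_i|_{D_x})$ of $\hr x^\circ\{T\}$, hence $\widetilde{b_j}^1$ vanishes on the fibre $(\spec\widetilde A[T]/(\widetilde{a_i}))_\xi$. Since this holds for every $\xi$, the image of $\widetilde{b_j}^1$ in $\widetilde A[T]/(\widetilde{a_i})$ is nilpotent. Now one uses that this ring is \emph{reduced} (flat with reduced fibres over the reduced ring $\widetilde A$, via \cite{ega42}, 6.4.1 and 6.5.3) to conclude $\widetilde{b_j}^1=0$, whence $\mathfrak B\otimes_{A^\circ}\widetilde A=\widetilde A[T]/(\widetilde{a_i})$. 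The reducedness of the residual model is thus the substitute for the impossible global patching, and is precisely where the sympathetic hypothesis (4) does its work.
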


\begin{proof}
Mentionnons tout d'abord que 
comme $\Gamma=\{1\}$, 
les réductions graduées sont ici les réductions usuelles ; 
les applications ordinaires de réduction sont celles que nous notons
$a\mapsto \widetilde a^1$ ; remarquons que l'on a $\widetilde{a_i}^1=\widetilde{a_i}$ pour tout $i$ puisque $a_i$ est de norme spectrale $1$. On note $D$ le polydisque unité relatif $\mathscr M(A\{T\})$. 

La proposition 1.1 de \cite{frg1}
assure qu'il existe
une famille \emph{finie}
$(b_1,\ldots, b_\ell)$ d'éléments de $A^\circ\{T\}$
telle que le quotient de $A^\circ\{T\}/(a_1,\ldots, a_m)$ par son idéal de
$k^\circ$-torsion
soit isomorphe au quotient
$A^\circ\{T\}/(a_1,\ldots, a_m,b_1,\ldots, b_\ell)$. Nous allons montrer que cette famille $(b_1,\ldots, b_\ell)$ convient. 
Il suffit de s'assurer que (a), (b) et (c) sont vérifiée, car les assertions (d), (e), et (f) découleront alors
de (c) et de la définition d'une présentation $\{1\}$-sympathique. 

\subsubsection{}
Par construction, l'image de chacun des
$b_i$ dans $A\{T\}/(a_1,\ldots, a_m)$ est nulle (car de $k^\circ$-torsion). 
Par conséquent, $A\{T\}/(a_1,\ldots, a_m,b_1,\ldots, b_\ell)$ s'identifie à $B$, d'où (a). 

\subsubsection{}
Posons $\mathfrak B=A^\circ\{T\}/(a_1,\ldots, a_m,b_1,\ldots, b_\ell)$. 
On a
\[\mathfrak B\otimes_{A^\circ}\widetilde A
=\widetilde A[T]/(\widetilde{a_1},\ldots, \widetilde{a_m}, \widetilde{b_1}^1,\ldots, \widetilde{b_\ell}^1).\]
Soit $\xi\in \spec \widetilde A$ et soit $x$ un point de $\mathscr M(A)$
se spécialisant en $\xi$. On a 
\[\mathfrak B\widehat\otimes_{A^\circ}\mathrm{Spf}\;\hr x^\circ
=
\hr x^\circ\{T\}/(a_1|_{D_x},\ldots, a_m|_{D_x},
b_1|_{D_x}\ldots, b_\ell|_{D_x}).\]
Comme les $(a_i|_{D_x})$ forment une famille fortement
génératrice de l'idéal qu'ils engendrent dans $\hr x\{T\}$, c'est une famille
génératrice de l'intersection de cet idéal avec $\hr x^\circ\{T\}$. Par conséquent, 
les $b_i|_{D_x}$ appartiennent à l'idéal de $\hr x^\circ\{T\}$ engendré par les $a_i|_{D_x}$, 
si bien que l'image de $\widetilde{b_i|_{D_x}}^1$ dans $\hrt x[T]/(\widetilde{a_1|_{D_x}}, \ldots, \widetilde{a_m|_{D_x}})$
est nulle pour tout $i$. On en déduit que pour tout $i$, l'image de $\widetilde{b_i}^1$
dans $\widetilde A[T]/(\widetilde{a_1},\ldots,\widetilde{a_m})$ s'annule 
en tout point de $(\spec \widetilde A[T]/(\widetilde{a_1},\ldots,\widetilde{a_m}))_\xi$. 
Ceci valant pour tout $\xi\in \spec \widetilde A$, l'image de $\widetilde{b_i}^1$
dans $\widetilde  A[T]/(\widetilde{a_1},\ldots,\widetilde{a_m})$ est nilpotente. Or comme 
$\spec \widetilde  A[T]/(\widetilde{a_1},\ldots,\widetilde{a_m})\to \spec \widetilde A$ est plat
et à fibres réduites, et comme $\widetilde A$ est réduite, $\widetilde  A[T]/(\widetilde{a_1},\ldots,\widetilde{a_m})$
est réduite
(combiner les proposition 6.4.1 et 6.5.3
de \cite{ega42}). Par conséquent, l'image de $\widetilde {b_i}^1$ dans $\widetilde  A[T]/(\widetilde{a_1},\ldots,\widetilde{a_m})$ est nulle pour tout $i$. Il vient
\[\mathfrak B\otimes_{A^\circ}\widetilde A
=\widetilde A[T]/(\widetilde{a_1},\ldots, \widetilde{a_m}).\]

La
fibre spéciale du morphisme de schémas formels
$\mathrm{Spf}\;\mathfrak B\to \mathrm{Spf}\;A^\circ$ 
s'identifie donc à
$\spec \widetilde A[T]/(\widetilde{a_1},\ldots, \widetilde{a_m})\to \spec \widetilde A$, d'où (c).

\subsubsection{}
Nous allons montrer que 
$\mathfrak B$ est plat sur $A^\circ$, ce qui achèvera la démonstration de la proposition. 
En vertu du lemme 1.6 de \cite{frg1}, il suffit
de vérifier que pour tout
$\lambda$ non nul dans $k^{\circ \circ}$, la $(A^\circ /\lambda A^\circ)$-algèbre
$\mathfrak B/\lambda \mathfrak B$ est plate. Fixons donc un tel $\lambda$. Les 
$k^\circ$-algèbres $A^\circ$ et $\mathfrak B$ sont plates par construction ; par conséquent, 
les $k^\circ/(\lambda)$-algèbres de présentation finie
$A^\circ /\lambda A^\circ$ et $\mathfrak B/\lambda \mathfrak B$
sont plates.
Or $k^\circ/(\lambda)$ est local et
son idéal maximal $k^{\circ\circ}/\lambda k^\circ$ est constitué
d'éléments nilpotents. On déduit alors du théorème 11.3.10 de \cite{ega43}
que pour que $\mathfrak B/\lambda\mathfrak B$ soit plat
sur $A^\circ /\lambda A^\circ$, il suffit 
que $\mathfrak B/k^{\circ \circ}\mathfrak B=\widetilde A[T]/(\widetilde{a_1},\ldots, 
\widetilde{a_m})$ soit plat sur $A^\circ/k^{\circ \circ}=\widetilde A$, 
ce qui est le cas par hypothèse (l'égalité $A^\circ/k^{\circ \circ}=\widetilde A$ est une conséquence immédiate du caractère $\{1\}$-distingué de $A$). 
\end{proof}

\begin{theo}\label{theo-main}
Supposons que $\Gamma$ est divisible. Soit
$Y\to X$ un morphisme entre espaces $k$-affinoïdes $\Gamma$-stricts, 
supposé plat et à fibres géométriquement réduites. Il existe une famille finie $(X_i\to X)$
de morphismes dont les images recouvrent $X$ et telle que pour tout $i$, 
les conditions suivantes soient satisfaites : 
\begin{itemize}[label=$\diamond$]
\item l'espace $X_i$ est affinoïde et $\Gamma$-strict ; 
\item le morphisme $X_i\to X$ possède une factorisation $X_i\to X'_i\to X$
où  $X'_i$ est affinoïde et $\Gamma$-strict, où $X_i\to X'_i$ est fini, radiciel et plat et où $X'_i\to X$ est quasi-étale ;
\item si on a de plus $\abs{\hr x\gpm}\neq \{1\}$ pour tout $x\in X$ (c'est notamment le cas dès que la valuation de $k$
est non triviale) alors $X_i\to X$ est quasi-étale  ; 
\item le morphisme $Y\times_X X_i\to X_i$ possède une présentation $\Gamma$-sympathique. 
\end{itemize}

\end{theo}

\begin{rema}
L'hypothèse que $\Gamma$ est divisible n'est pas très contraignante. En effet, supposons donné un morphisme $Y\to X$
entre espaces affinoïdes $\Gamma$-stricts, plat et à fibres réduites, mais sans hypothèse sur $\Gamma$. Comme les espaces $X$ et $Y$ sont $\Gamma$-stricts, ils sont $\Gamma^\Q$-stricts, et le théorème ci-dessus peut alors être appliqué en remplaçant $\Gamma$ par $\Gamma^\Q$. 
\end{rema}

\begin{proof}[Démonstration du théorème \ref{theo-main}]
L'assertion à prouver est locale sur $X$ ; il suffit donc de montrer qu'elle vaut au-dessus d'un voisinage affinoïde $\Gamma$-strict
convenable d'un point $x$ donné sur $X$. On note $A$ et $B$ les algèbres de fonctions analytiques respectives de $X$ et $Y$, et l'on pose $C=B\hotimes_A \hr x$.

\subsubsection{}
Puisque $C$ est géométriquement
réduite par hypothèse, le théorème
\ref{theo-affinoide-ponctuel} assure qu'il existe une extension finie $E$ de $\hr x$, 
séparable si $\hr x$ n'est pas trivialement valué, telle que $C_E$ soit universellement $\Gamma$-distinguée. Choisissons une surjection $v\colon E\{T_1/r_1,\ldots, T_n/r_n\}\to C_E$ universellement distinguée
avec les $r_i$ appartenant à $\Gamma$. Pour tout $i$, on pose $f_i=v(T_i)$ ; on peut
supposer que $\norm {f_i}=r_i$ quel que soit $i$
(remarque \ref{rem-distingue-bonrayon}).

Soit $\abs \cdot$ une valuation appartenant à la réduction 
$\widetilde{(X,x)}$ du germe $(X,x)$. Choisissons un prolongement
de $\abs \cdot$ à $\widetilde E$, encore noté $\abs \cdot$ ; suivant nos conventions générales en matière de corpoïdes valués, 
l'annéloïde 
de la valuaiton $\abs \cdot $ de $\widetilde E$ sera noté
$\widetilde E^\circ$. 
Notons que $\widetilde E$ est de degré de transcendance fini sur $\widetilde k$ ; par conséquent, la hauteur de $\abs \cdot$ est finie. 

Soit $\mathsf B$ l'image de $\widetilde B$ dans $\widetilde {C_E}$.
La $\widetilde E$-algèbre $\widetilde {C_E}$ est entière sur $\widetilde E\cdot \mathsf B$ (cela se prouve de la même manière que les assertions (iv) et (v) de \cite{temkin2004}, Prop. 3.1). Il existe donc une famille finie $\beta_1,\ldots, \beta_\ell$ d'éléments 
de $\widetilde{C_E}$, entiers sur $\widetilde E^\circ \cdot \mathsf B$, et qui engendrent $\widetilde{C_E}$ comme $\widetilde E$-algèbre. 
Quitte à agrandir la famille $(\beta_i)$, on peut supposer qu'elle contient une famille
génératrice de $\mathsf B$ comme $\widetilde k$-algèbre. 
Puisque $C_E$ est universellement
distinguée, la $\widetilde E$-algèbre $\widetilde{C_E}$ est géométriquement réduite
(et $\widetilde{C_\Lambda}=\widetilde{(C_E)}_{\widetilde \Lambda}$
pour toute extension complète $\Lambda$ de $E$).
D'après la version graduée et valuée du théorème de la fibre réduite (corollaire
\ref{coro-fibre-reduite}), il existe une extension valuée
finie $L$
de $\widetilde E$
telle que le normalisé faible $R$ de la $L^\circ$-algèbre $L^\circ[\beta_1,\ldots, \beta_\ell]$
 soit fini sur $L^\circ[\beta_1,\ldots, \beta_\ell]$, telle que les fibres de $\spec R\to \spec L^\circ$ soient géométriquement réduites et à composantes
irréductibles géométriquement irréductibles, et telle que $R_{\Lambda^\circ}$ soit le normalisé faible de $\Lambda^\circ[\beta_1,\ldots, \beta_\ell]$ pour toute extension valuée $\Lambda$ de $\widetilde L$. Quitte à remplacer $E$
par une extension finie convenable (et même finie séparable si la valuation de $\hr x$ n'est pas triviale) on peut supposer que $L=\widetilde E$.
La $\widetilde E^\circ$-algèbre $R$ possède
dès lors les propriétés suivantes : 

\begin{itemize}[label=$\diamond$] 
\item elle est de type fini et $\widetilde E\cdot R=\widetilde{C_E}$ ; 
\item elle contient $\mathsf B$ et est finie
sur $\widetilde E^\circ \cdot \mathsf B$ ; 
\item le morphisme $\spec R\to
\spec \widetilde E^\circ$ est plat à fibres géométriquement réduites,
et les composantes irréductibles de ses fibres sont géométriquement
irréductibles. 
\end{itemize}

\subsubsection{}
Pour tout $i$ compris entre $1$ et $n$, la réduction $\widetilde {f_i}$ est un élément non nul de $\widetilde{C_E}$. Il existe un élément
$\lambda_i$ non nul de $\widetilde E^\circ$ tel que $\lambda_i\widetilde{f_i}$ appartienne à $R$. Relevons $\lambda_i$ en un élément $\ell_i$ de $E$ de valeur absolue 
$\mathfrak d(\lambda_i)$. 
Le morphisme
\[v\colon E\{T_1/r_1,\ldots, T_n/r_n\}\to C_E, \;T_i\mapsto f_i\]
étant une surjection universellement distinguée il en va de même de 
\[ E\{T_1/\abs {\ell_1}\cdot r_1,\ldots, T_n/\abs {\ell_n} \cdot r_n\}\to C_E, \;T_i\mapsto \ell_i f_i.\] 
On peut donc se ramener au cas où chacune des $\widetilde {f_i}$ appartient à $R$. 

Soit $\alpha_1,\ldots, \alpha_p$ une famille d'éléments non nuls de l'annéloïde $R$ telle que les $\widetilde {f_i}$ et les $\alpha_i$ 
engendrent $R$ comme $\widetilde E^\circ$-algèbre. Fixons $i$ entre $1$ et $p$. 

Pour tout $i$, relevons $\alpha_i$ en un élément $a_i$ de $C_E$ de norme spectrale  égale à $\mathfrak d(\alpha_i)$. 
Le morphisme
\[E\{T_1/r_1,\ldots, T_n/r_n,T_{n+1}/\mathfrak d(\alpha_1),\ldots, T_{n+p}/\mathfrak d(\alpha_p)\}\to C_E, \left\{\begin{array}{ccc}
T_i&\mapsto&f_i\;\;\text{si}\;i\leq n\\
T_i&\mapsto&a_{i-n}\;\;\text{sinon}\end{array}\right.\]
est encore une surjection universellement distinguée.
En concaténant la famille des $f_i$ et celle des $a_i$ et en renumérotant, on se ramène finalement
au cas où $(\widetilde {f_1},\ldots, \widetilde{f_n})$ est une famille
génératrice de $R$ comme
$\widetilde E^\circ$-algèbre.

%
%

\subsubsection{}Nous allons maintenant montrer l'existence d'un espace affinoïde $\Gamma$-strict
$X'$ muni d'un morphisme
$X'\to X$ et qui possède les propriétés suivantes : 

\begin{itemize}[label=$\diamond$] 
\item le point $x$ possède un unique antécédent $x'$ sur $X'$, et $\hr {x'}\simeq_{\hr x}E$ ; 

\item le morphisme $X'\to X$ admet une factorisation $X'\to X''\to X_0$
où:\begin{itemize}[label=$\bullet$]
\item $X_0$ est un voisinage affinoïde $\Gamma$-strict de $x$ dans $X$ ; 
\item $X''\to X_0$ est fini étale ; 
\item $X'=X''$ si $\abs{\hr x\gpm}\neq \{1\}$, et $X'\to X''$ est fini, radiciel et plat sinon. 
\end{itemize}
\end{itemize}

\paragraph{Le cas où $\abs{\hr x\gpm}\neq \{1\}$}
Dans ce cas $E$ est séparable sur $\hr x$ et il résulte du théorème 4.4.1
de \cite{berkovich1993}
que l'extension $\hr x\hookrightarrow E$ est induite par un morphisme étale $(X',x')\to (X,x)$ de germes d'espaces $k$-analytiques, 
d'où notre assertion. 

\paragraph{Le cas où $\abs{\hr x\gpm}=1$}
Le corps $\hr x$ n'est alors autre que le corps résiduel $\kappa(x)$ de $\mathscr O_{X,x}$. 
En dévissant l'extension $E$ en ses parties séparable et radicielle on obtient l'existence d'une $\mathscr O_{X,x}$-algèbre 
locale, finie et étale $E_1$ et d'une d'une $E_1$-algèbre locale, finie, radicielle et plate $E_2$ telles que $E_2\otimes_{\mathscr O_{X,x}}\kappa(x)=E$. 
Il existe, là encore par le théorème
4.4.1 de \cite{berkovich1993}, un voisinage affinoïde $\Gamma$-strict $X_0$ de $x$ dans
$X$ tel que $E_1$ soit induite par une $\mathscr O_X(X_0)$-algèbre finie étale ; si $X''$ désigne l'espace affinoïde correspondant le point $x$ a un unique
antécédent $x''$ sur $X''$ et $E_1=\mathscr O_{X'',x''}$. Il existe un voisinage affinoïde $\Gamma$-strict $X''_0$ de $x''$ dans $X''$ tel que $E_2$
soit induite par une $\mathscr O_{X''}(X''_0)$-algèbre finie, radicielle et plate ; 
si $''$ désigne l'espace affinoïde correspondant le point $x''$ a un unique
antécédent $x'$ sur $X'$ et $E_2=\mathscr O_{X',x'}$. Quitte à restreindre $X_0$ et $X''$ on peut supposer que $X''_0=X''$, 
et le morphisme $X'\to X$ possède alors les propriétés requises. 

\paragraph{Choix de fonctions prolongeables à $X'$}\label{par-def-phii}
Quitte à restreindre $X_0, X''$ et $X'$ on peut supposer par approximation
que la fonction $f_i$ 
provient pour tout $i$
d'une
fonction
analytique sur $Y':=Y\times_X X'$, notée $\phi_i$.
Il suffit en effet de s'assurer que les propriétés qu'on impose
aux $f_i$ sont stables par petites perturbations. Or c'est évident en ce qui concerne les propriétés résiduelles, et en ce qui concerne le caractère universellement distingué
du morphisme $v\colon E\{T_1/r_1,\ldots, T_n/r_n\}\to C_E, \;T_i\mapsto f_i$ c'est une conséquence de \ref{distingue-bon-rayon}.

\subsubsection{Simplification du problème}\label{sss-description-simplification}
Il suffit maintenant de montrer l'existence d'un domaine affinoïde $\Gamma$-strict $V$ de $X'$ contenant $x'$, tel que $\widetilde {(V,x')}$ 
contienne la valuation $\abs \cdot$ de $\hrt{x'}=\widetilde E$, et tel que $Y'\times_{X'}V=Y\times_X V$ possède une présentation $\Gamma$-sympathique sur $V$. 

Supposons en effet avoir exhibé un tel domaine.
La preuve du lemme 2.4 de  \cite{ducros2003} (théorème de Gerritzen-Grauert dans le cas non nécessairement strict) assure que $V$ est une union finie de domaines rationnels dont la définition ne fait intervenir que des scalaires appartenant à $\Gamma$ ; on peut donc supposer que $V$ est un tel domaine.
Dans ce cas c'est l'image
réciproque d'un domaine affinoïde $\Gamma$-strict $V_0$
de $X''$ (si $\abs{\hr x \gpm}\neq \{1\}$ alors $X'\simeq X''$ et il n'y a rien à faire ; sinon il suffit d'élever les inégalités définissant $V$ dans $X'$ à une puissance convenable de l'exposant caractéristique de $k$). 
Le morphisme $V\to X$ admet donc une factorisation $V\to V_0\to X$ où $V_0$ est affinoïde et $\Gamma$-strict, où $V_0\to X$ est quasi-étale et où $V\to V_0$ est fini, radiciel et plat,
et est un isomorphisme si $\abs{\hr x \gpm}\neq \{1\}$. 

Comme $V$ est plat sur $X$ et comme $V$ est $\Gamma$-strict l'image de ce morphisme est un domaine analytique compact $\Gamma$-strict $U$
de $X$
contenant $x$. La réduction 
$\widetilde{(U,x)}$ contient par fonctorialité la valuation $\abs \cdot$ de $\hrt x$.

Le point $x$ et la valuation $\abs \cdot \in \widetilde{(X,x)}$ ayant été choisis
arbitrairement, il s'ensuit en vertu de la quasi-compacité
des réductions de germes et de la compacité de $X$
qu'il existe une famille couvrante de $X$ possédant les propriétés requises.

\subsubsection{}
Nous allons
donc désormais chercher à exhiber un domaine $V$ comme au \ref{sss-description-simplification}. On remplace $X$ par $X'$, etc. ce qui permet
de supposer à partir de maintenant que $X'=X, x'=x, Y'=Y$ et $E=\hr x$.
Notons $\mathscr V$ l'ensemble des domaines affinoïdes $\Gamma$-stricts
de $X$ contenant $x$ et tels que $\widetilde{(V,x)}$ contienne 
$\abs \cdot$. Notre but est de construire un domaine appartenant à $\mathscr V$ au-dessus
duquel $Y$
possède une présentation $\Gamma$-sympathique. Nous rédigerons le raisonnement comme suit : nous «restreindrons» $X$ de façon répétée, ce qui voudra dire par convention que nous le remplaçons par un élément bien choisi de $\mathscr V$ ; lorsqu'il arrivera qu'il suffise de remplacer $X$ par un \emph{voisinage}
$\Gamma$-strict de $x$ dans $X$, nous dirons que nous restreignons $X$ \emph{autour de $x$} ; et nous montrerons à la fin que $Y$ possède une présentation sympathique sur $X$.

\paragraph{}\label{p-notations-hauteur}
Soit $\mathsf h$ la hauteur de $\abs \cdot$. On note $\tau_0,\ldots, \tau_{\mathsf h}$ les points de $\spec \hrt x^\circ$, numérotés
de sorte que $\tau_{i+1}$ soit une spécialisation de $\tau_i$ pour tout $i$ entre $0$ et $h-1$ ; le point $\tau_0$ est donc le point générique de
$\spec \hrt x^\circ$, et $\tau_{\mathsf h}$ est son point fermé. 
Pour tout $i$ compris entre $0$ et $\mathsf h-1$ on choisit un élément $\nu_i$ de $\hrt x^\circ$ s'annulant 
en $\tau_{i+1}$ mais pas en $\tau_i$, et on pose $\nu_{\mathsf h}=1$. Le lieu d'inversibilité
$\mathrm D(\nu_i)$ de $\nu_i$ sur $\spec \hrt x^\circ$
est exactement $\{\tau_j\}_{i\geq j\geq 0}$ ; on fixe une fonction $\theta_i\in \mathscr O_{X,x}$ telle que
$\abs {\theta_i(x)}\in \Gamma$ et $\widetilde{\theta_i(x)}=\nu_i$ ; on restreint $X$ autour de $x$ de sorte que $\theta_i\in A$ pour tout $i$ ; on suppose par ailleurs que $\theta_{\mathrm h}=1$. 

\paragraph{}\label{p-union-filtrante1}
Pour tout $V\in \mathscr V$ nous noterons $A_V$ 
et $B_V$ les algèbres de fonctions analytiques respectives de $V$ et $Y\times_X V$ ; nous désignerons par $\mathsf A_V$ l'image de $\widetilde{A_V}$
dans $\hrt x$, et par $\mathsf B_V$ celle de $\widetilde{B_V}$ dans $\widetilde C$ ; remarquons
que $\mathsf B_X$ est ce que nous notons simplement $\mathsf B$, et nous écrirons de même $\mathsf A$ au lieu de $\mathsf A_X$. Pour tout espace affinoïde $Z$ nous désignerons 
par $\norm \cdot_Z$ la semi-norme spectrale de l'anneau
des fonctions analytiques sur $Z$ 
(et le cas échéant nous écrirons $\norm a_Z$ plutôt que que $\norm {a|_Z}_Z$). 

Soit $V\in \mathscr V$. Par construction, $\widetilde{(V,x)}
=\P_{\hrt x/\widetilde k}\{\mathsf A_V\}$ ; 
en particulier, $\hrt x^\circ\supset\mathsf A_V$. 

Réciproquement soit $\theta$ un élément
de $\mathscr O_{X,x}\gpm$ tel que $\abs {\theta(x)}\in \Gamma$
et $\widetilde {\theta(x)} \in \hrt x^\circ$, et soit
$W$ un voisinage affinoïde $\Gamma$-strict de $x$ dans $X$ sur lequel $\theta$ est définie. 

Soit $V$ le domaine affinoïde $\Gamma$-strict de $W$
défini par l'inégalité $\abs \theta\leq \abs{\theta(x)}$ ; le domaine $V$
contient $x$, et $\norm \theta_V=\abs{\theta(x)}$ par construction.
On a
\[\widetilde{(V,x)}=\widetilde{(X,x)}\cap \P_{\hrt x/\widetilde k}\{\widetilde{\theta(x)}\}\] et $\widetilde{(V,x)}$ contient par conséquent $\abs \cdot$, si bien que $V\in \mathscr V$.
L'égalité $\norm \theta_V=\abs{\theta(x)}$ assure que l'image de l'élément $\widetilde \theta$
de $\widetilde{A_V}$ dans $\hrt x$ est alors égale à $\widetilde {\theta(x)}$, et ce dernier appartient par conséquent à $\mathsf A_V$. 

On déduit de ce qui précède que
$\hrt x^\circ$ est la réunion filtrante des $\mathsf A_V$ pour $V$ parcourant 
$\mathscr V$.

\paragraph{}\label{p-union-filtrante2}
Il résulte de \ref{p-union-filtrante1}
qu'on peut restreindre $X$ de sorte que $\norm{\theta_i}_X=\abs{\theta_i(x)}$ pour tout $i$. 
Pour tout $V\in \mathscr V$, notons $V_i$ le domaine affinoïde
$\Gamma$-strict de $V$ défini par l'égalité $\abs {\theta_i}=\abs{\theta_i(x)}$ (remarquons que  comme $\theta_{\mathsf h}=1$
tout domaine
$V$ appartenant à $\mathscr V$ vérifie $\norm{\theta_{\mathsf h}}_V=\abs{\theta_{\mathsf h}(x)}$ , et on a alors $V_{\mathsf h}=V$). 
L'image de l'élément $\widetilde{\theta_i}$ de $\widetilde{A_V}$ dans $\hrt x$ est alors égale à $\nu_i$
(qui appartient donc à $\mathsf A_V$)
et l'annéloïde $\widetilde{A_{V_i}}$ s'identifie à $\widetilde {A_V}[\frac 1{\widetilde{\theta_i}}]$. 
Il vient $\mathsf A_{V_i}=\mathsf A_V[\frac 1{\nu_i}]$. 

L'annéloïde de valuation $\mathscr O_{\spec \hrt x^\circ, \tau_i}$ est égal 
à $\hrt x^\circ [\frac 1{\nu_i}]$. Il peut en conséquence s'écrire comme
la réunion filtrante des $\mathsf A_V[\frac 1{\nu_i}]=\mathsf A_{V_i}$ pour $V$ parcourant $\mathscr V$.

\subsubsection{}
Par construction, les $\widetilde{f_i}$ engendrent $R$, et ils sont
en particulier entiers sur $\hrt x^\circ \cdot \mathsf B$ ; et par ailleurs $R$
contient $\mathsf B$, qui est de type fini sur $\widetilde k$.
En vertu de \ref{p-union-filtrante1}, il existe 
par conséquent $V\in \mathscr V$ tel que les
$\widetilde{f_i}$ soient entiers sur $\mathsf A_V\cdot \mathsf B\subset \mathsf B_V$ et tels que $\mathsf B\subset \mathsf A_V[\widetilde{f_1},\ldots, \widetilde{f_n}]$. On peut donc
restreindre $X$ de sorte que les $\widetilde{f_i}$ soient entiers sur $\mathsf B$ et que $\mathsf B\subset \mathsf A[\widetilde{f_1},\ldots, \widetilde{f_n}]$.

\subsubsection{}
Nous allons maintenant établir un résultat général que nous appliquerons à plusieurs reprises
dans la suite. 
Soit $W\in \mathscr V$ et soit $\chi$ une fonction analytique sur
$Y_ W$. On pose $h=\chi|_{Y_x}$ et on suppose
que $\rho:=\norm h_{Y_x}$ est un élément de $\Gamma$.

\paragraph{}\label{p-controle-norme1}
Supposons que $\widetilde h$
appartient à $R$. Nous allons montrer qu'il existe un élément $V$ de $\mathscr V$ contenu dans $W$
et tel que 
$\norm\chi_{Y_V}=\rho$ (il suffit pour que ce soit le cas
que $\norm \chi_{Y_V}$ soit majorée par $\rho$, puisque
$\norm \chi_{Y_x}=\norm h_{Y_x}=\rho$).

Comme $\widetilde h$ appartient à $R$, elle est entière
sur $\mathsf B\cdot \hrt x^\circ$. Il existe donc $V\in \mathscr V$
tel que $V\subset W$ et tel que 
$\widetilde h$
soit entière sur $\mathsf B\cdot \mathsf A_V\subset \mathsf B_V$.
ll existe
par conséquent un entier $N>0$, un sous-ensemble
$I$ de $\{0,\ldots, N-1\}$ 
et une famille $(b_i)_{i\in I}$ d'éléments de $B_V$ possédant les propriétés suivantes : 

\begin{itemize}[label=$\diamond$] 
\item pour tout $i\in I$ on a $\norm {b_i}_{Y_V}=\norm{b_i}_{Y_x}
=\rho^{N-i}.$ 

\item $\left|h^N+\sum_{i\in I} (b_i|_{Y_x})h^i\right|<\rho^N$ sur tout $Y_x$. 
\end{itemize}
On a alors $\left|h^N+\sum_{i\in I} b_i \chi^i\right|<\rho^N$ au-dessus d'un voisinage de
$x$ dans $V$ (par propreté topologique de $Y\to X$). Quitte à restreindre $V$ autour de $x$, on peut donc
supposer que $\left|\chi^N+\sum_{i\in I} b_i \chi^i\right|<\rho^N$  sur tout $Y_V$. Ceci entraîne que
$\abs \chi\leq \rho$ en tout point de  $Y_V$ : en effet s'il existait $z\in Y_V$ tel que
$\abs{\chi(z)}>\rho$ on aurait pour tout $i\in I$ l'inégalité $\rho^N>\abs{b_i(z)}\rho^I$ car
$\abs{b_i}\leq \rho^{N-i}$ sur tout $Y_V$ ; mais ceci impliquerait que 
$\left|\chi^N(z)+\sum_{i\in I} b_i \chi^i(z)\right|=\rho^N$, ce qui est absurde.

\paragraph{}\label{p-controle-norme2}
On suppose toujours que $\widetilde h$
appartient à $R$. Soit $i$ un entier
compris entre $0$ et $h$ tel que l'ouvert $\mathrm D(\widetilde h)$
de
$\spec R$ rencontre $(\spec R)_{\tau_i}$. Soit $t$ un point maximal 
de $(\spec R)_{\tau_i}$ situé sur $\mathrm D(\widetilde h)$, et soit $d$ la dimension de $\overline{\{t\}}_{\tau_i}$. 
Le paragraphe
\ref{p-controle-norme1}
assure qu'il existe un domaine $V\in \mathscr V$
contenu dans $W$
tel que $\norm{\chi}_{Y_V}=\rho$. Nous 
allons montrer qu'on peut choisir un tel $V$ en imposant de surcroît
que pour tout $z\in V_i$ il existe un domaine
affinoïde $\Gamma$-strict de $Y_z$, non vide et purement de dimension $d$, sur lequel $\abs \chi$ prend la valeur $\rho$. 

Commençons par restreindre $X$ de sorte que $\chi$ soit
défini sur $Y$ tout entier (autrement dit, $W=X$) 
et que l'on ait $\norm \chi_Y=\rho$, ce
qui est possible d'après \ref{p-controle-norme1}.
Il résulte du corollaire \ref{coro-controle-dimfibres}
qu'il existe un voisinage ouvert de $t$ dans $\spec R$
purement de dimension relative $d$
sur $\spec \hrt x^\circ$ ; on peut toujours supposer
ce voisinage de la forme $\mathrm D(\alpha)$
pour un certain $\alpha$ non nul de $R$,
dont on note $\rho'$ le degré ; quitte à restreindre $X$
autour de $x$, on peut supposer que $\alpha=\widetilde{a|_{Y_x}}$
pour une certaine fonction analytique $a$ sur $Y$ telle que $\norm a_{Y_x}=\rho'$.

En vertu de \ref{p-controle-norme1}, on peut 
restreindre $X$ de sorte que $\norm a_Y=\rho'$. Soit $Y'$ le domaine
affinoïde de $Y$ défini par l'égalité $\abs a=\rho'$. La fibre $Y'_x$ a pour réduction $\mathrm D(\alpha)_{\tau_0}$, qui est purement de dimension $d$ par hypothèse. Il s'ensuit que $Y'_x$ est purement de dimension $d$ (lemme
\ref{lem-dim-xtilde}). Le morphisme $Y'\to X$ est plat, et ses fibres sont géométriquement réduites ; elles sont en particulier réduites, et satisfont \emph{a fortiori}
la propriété $(S_1)$. Il découle
alors du théorème
\ref{theo-dimrel-s1}
et de la propreté topologique de $Y'\to X$
que quitte à restreindre $X$ autour de $x$, on peut supposer que 
les fibres de $Y'\to X$ sont purement de dimension $d$. 

Nous allons montrer qu'il existe un élément $V$ de $\mathscr V$
tel que pour tout $z\in V_i$, la fonction $\abs {a\chi}$ prenne la valeur $\rho\rho'$ sur $Y_z$ ; compte-tenu du fait que $\norm \chi_Y=\rho$ et $\norm a_Y=\rho'$, cela signifie qu'il existe un point de $Y_z$ en lequel $\abs \chi=\rho$ et $\abs a=\rho'$, c'est-à-dire encore qu'il existe un point de $Y_z$ en lequel $\abs \chi=\rho$ et qui est situé sur le domaine affinoïde $\Gamma$-strict $Y'_z$ de $Y_z$, lequel est purement de dimension $d$. 

Pour alléger les notations, on peut remplacer $\chi$ par $a\chi$ et $\rho$ par $\rho' \rho$ (et bien sûr $h$ par $(a|_{Y_x})h$) ; il s'agit alors
maintenant simplement de montrer qu'il existe un élément $V$ de $\mathscr V$ tel que pour tout $z\in V_i$, la fonction $\abs \chi$ prenne la valeur $\rho$ sur $Y_z$, c'est-à-dire encore, compte-tenu du fait que $\norm \chi_Y=\rho$, tel que $\norm \chi_{Y_z}=\rho$  pour tout $z\in V_i$. Après ces substitutions, la fonction $\widetilde h$ est encore inversible en le point (maximal) $t$ de la fibre  $(\spec R)_{\tau_i}$. 

Notons $\abs \cdot^\star$
la valuation de $\hrt x$ d'annéloïde $\mathscr O_{\spec \hrt x^\circ, \tau_i}$.
Le morphisme
$\spec R\to \spec \hrt x^\circ$ est plat et donc générisant ; il existe par conséquent un point $t'$
de $(\spec R)_{\tau_0}$ qui se spécialise  en $t$. 
Soit $y$ un point de $Y_x$ dont l'image sur $\widetilde{Y_x}$ est égale à $t'$. 
Choisissons une valuation de $\kappa(t')$
dont l'annéloïde domine $\mathscr O_{\overline{\{t'\}},t}$, et prolongeons-la
arbitrairement en une valuation de $\hrt y$ ; celle-ci
 induit la valuation 
$\abs \cdot ^\star$ sur $\hrt x$
et on la note
encore
$\abs \cdot^\star$. 
Par construction, l'annéloïde de $\abs \cdot^\star$ dans $\hrt y$
contient l'image de $R$, 
et en particulier l'image de $\hrt x^\circ \cdot \mathsf B$, et
\emph{a fortiori}
celle
de $ \mathsf B$ ; il s'ensuit que
$\abs \cdot ^\star\in \widetilde{(Y,y)}$. Notons également que
comme $\widetilde h$ est inversible en $t$ on a $\abs{\widetilde h(t')}^\star=1$. 
Ceci entraîne que $\abs{h(y)}=\rho$ et que $\abs{\widetilde{h(y)}}^\star=1$. 

Soit $\Omega$ le domaine affinoïde $\Gamma$-strict
de $Y$ défini par l'égalité $\abs \chi=\rho$. Par
définition de $\Omega$
on a $\widetilde{(\Omega,y)}=
\widetilde{(Y,y)}\cap\P_{\hrt y/\widetilde k}\{\widetilde{h(y)}, \widetilde{h(y)}\inv\}$ ; 
en particulier, $\widetilde{(\Omega,y)}$
contient $\abs \cdot^\star$. 

Par platitude, l'image de $\Omega$ sur $X$ est un domaine analytique $\Gamma$-strict
et compact $U$ de $X$, qui contient $x$. Par fonctorialité, 
$\widetilde{(U,x)}$ contient la valuation 
$\abs \cdot^\star$ de $\hrt x$. L'annéloïde $\mathsf A_U$
est donc contenu dans $\mathscr O_{\spec \hrt x^\circ,\tau_i}$, et il existe
de
ce fait
un élément $V$ de $\mathscr V$
tel que 
$\mathsf A_U\subset \mathsf A_{V_i}$. Cette dernière inclusion 
assure que
$\widetilde{(V_i,x)}$ est contenu dans $\widetilde{(U,x)}$. 
Il existe donc un voisinage affinoïde $\Gamma$-strict $X'$ de $x$ dans $X$ tel que 
$X'\cap V_i\subset U$. Quitte à remplacer $V$ par $V\cap X'$ on peut
supposer que $V_i\subset U$ ; 
par définition de $U$ 
il existe
alors pour tout point $z$ de $V_i$ un point de $Y_z$ en lequel $\abs \chi=\rho$. Il s'ensuit
que $V$
satisfait aux conditions requises.

\subsubsection{}
En vertu de \ref{p-controle-norme1}
on peut restreindre $X$ de sorte que 
$\norm{\phi_i}_Y=r_i$ pour tout $i$
(les $\phi_i$ ont été définies
en \ref{par-def-phii}).

Notons $D=\mathscr M(A\{T_1/r_1,\ldots, T_n/r_n\})$
le $k$-polydisque fermé relatif
de polyrayon
$(r_1,\ldots, r_n)$ sur $X$. 
Soit $\pi \colon Y\to D$
le morphisme défini par le $n$-uplet $(\phi_1,\ldots, \phi_n)$. 
Par construction, la restriction $\pi|_{Y_x}\colon Y_x\to D_x$
est une immersion fermée. 

Soit $y\in Y_x$ ; posons $z=\pi(y)$. Soit $J$ l'ensemble
des indices $i$ tels que $\abs{T_i(z)}=r_i$ ; c'est aussi
l'ensemble des indices $i$ tels que $\abs{f_i(y)}=r_i$, c'est-à-dire
aussi celui des indices $i$ tel que $\widetilde{f_i}(\widetilde y)\neq
0$, en désignant par $\widetilde y$ 
l'image de $y$ sur $\widetilde{Y_x}$. 
Le corpoïde $\hrt z$ est une extension de $\hrt x$
et 
\[\widetilde{(D,z)}=
\P_{\hrt z/\widetilde k}\{\mathsf A\cup\{\widetilde {T_i(z)}\}_{i\in J}\}.\]
L'image réciproque
$\mathscr U$ 
de $\widetilde{(D_,z)}$ sur $\mathbf P_{\hrt y/\widetilde k}$ 
est
donc
égale à 
\[ \P_{\hrt y/\widetilde k}\{\mathsf A\cup\{\widetilde {f_i(y)}\}_{i\in J}\}.\]
Le sous -annéloïde de $\hrt y$ engendré par $\mathsf A$ et les $\widetilde {f_i(y)}$ pour $i\in J$ est celui engendré par $\mathsf A$ et les $\widetilde{f_i}(\widetilde y)$ pour $i$ variant entre $1$ et $n$, c'est-à-dire par l'image de  l'annéloïde $\mathsf A[\widetilde{f_1},\ldots, \widetilde{f_n}]$ de $\widetilde C$. 
Puisque $\mathsf A[\widetilde{f_1},\ldots, \widetilde{f_n}]$
contient $\mathsf B$ 
et est entier sur celui-ci,
l'ouvert $\mathscr U$ est égal à
$\P_{\hrt y/\widetilde k}\{\mathsf B\}$,
c'est-à-dire à
$\widetilde{(Y,y)}$ ; en conséquence, $\pi$
est sans bord en $y$. Ceci valant pour tout $y\in Y_x$ il en résulte par
propreté topologique qu'il existe un ouvert $X'$ de $X$ contenant $x$
tel que $Y_{X'}\to D_{X'}$ soit sans bord.
Quitte à remplacer $X$ par un voisinage
affinoïde $\Gamma$-strict de $x$ dans $X'$, 
on peut donc supposer
que $\pi$ est sans bord. Puisque $Y$ et $D$ sont affinoïdes, 
cela entraîne que $\pi$ est fini.

Comme $\pi|_{Y_x}$ est une immersion fermée, 
la restriction du morphisme canonique
$\mathscr O_D\to \pi_*\mathscr O_Y$ à
la fibre $D_x$ est surjective. On en déduit
que $\mathscr O_D\to \pi_*\mathscr O_Y$ est surjectif au voisinage 
de $D_x$ dans $D$ (c'est une conséquence du lemme de Nakayama, 
voir \cite{ducros2018}, 2.5.4). 
Par propreté
topologique on peut encore restreindre $X$ autour de $x$ de sorte que
$\mathscr O_D\to \pi_*\mathscr O_Y$ soit surjective, ce qui 
signifie que $\pi$ est une immersion fermée. 

\subsubsection{}
Soit $I$ le noyau du morphisme surjectif $A\{T/r\}\to B$. 
Remarquons que $I\hr x\{T/r\}$ est
le noyau de la surjection distinguée
$v\colon \hr x\{T/r\}\to C, T_i\mapsto f_i$ ; nous noterons
$\widetilde v \colon \hrt x [r\backslash T]\to \widetilde C$ la
surjection induite. Comme les $\widetilde {f_i}$ engendrent
$R$
comme $\hrt x^\circ$-algèbre, 
le morphisme $\widetilde v$ induit une flèche
surjective $\hrt x^\circ[r\backslash T]\to R$. 

Nous allons montrer qu'on peut restreindre $X$
de sorte qu'il existe une famille finie $(a_1,\ldots , a_m)$
satisfaisant les propriétés suivantes, en posant $h_i=a_i|_{Y_x}$
quel que soit $i$ : 
\begin{itemize}[label=$\diamond$] 
\item les $h_i$ forment une famille fortement génératrice de 
$\mathrm{Ker}(v)$ et pour tout $i$ la
norme spectrale $\rho_i:=\norm{h_i}_{Y_x}$ appartient à $\Gamma$ ; 
\item les $\widetilde {h_i}$ appartiennent à $\hrt x^\circ[r\backslash T]$, sont de norme $1$
pour la norme de Gauß $\sum \alpha_I T^I\mapsto \max \abs{\alpha_I}$,
et engendrent en tant qu'idéal de $\hrt x^\circ[r\backslash T]$
le noyau de la surjection
$\hrt x^\circ[r\backslash T]\to R$ induite
par $\widetilde v$ ;
\item pour tout $z\in X$ et tout $i$ on a $\norm{h_i}_{Y_z}=\rho_i$.
\end{itemize}

\paragraph{}
On commence par choisir
une famille $(h_1,\ldots, h_m)$ fortement
génératrice de $\mathrm{Ker}(v)$ (proposition \ref{prop-fort-gen}).
Comme cette propriété est stable par petites perturbations d'après \ref{def-fortgen})
on peut supposer quitte à restreindre $X$ que
$h_i$ est pour tout $i$ la restriction d'une fonction analytique $a_i$ sur $D$ appartenant à $I$. 

\paragraph{}
Complétons $(a_1,\ldots, a_m)$ en une famille
$(a_1,\ldots, a_m, b_1,\ldots, b_p)$ qui engendre $I$. 

Soit $j$ un indice tel que $b_j|_{Y_x}=0$ et soit $Y'$ le sous-espace
analytique fermé de $D$ défini par l'idéal
$((a_i)_i, (b_i)_{i\neq j})$. Considérons par abus $\mathscr O_Y$ et $\mathscr O_{Y'}$ comme
des faisceaux cohérents sur $D$. On a une surjection naturelle $\mathscr O_{Y'}\to \mathscr O_Y$, dont la restriction à $D_x$ est
bijective par choix de $j$. Or $Y$ est plat sur $X$, ce qui signifie que $\mathscr O_Y$ est plat sur $X$. 
Il en résulte que $\mathscr O_{Y'}\to \mathscr O_Y$ est un isomorphisme au voisinage de $D_x$ (\cite{ducros2018}, 
lemme 4.5.10). Par propreté topologique on peut donc restreindre $X$
autour de $x$ de sorte que $I$ soit engendré par 
les $a_i$ pour $i$ compris entre $1$ et $m$ et par les $b_i$ pour $i$ compris entre $1$ et $p$ et différent de $j$.

En réitérant l'opération autant de fois
qu'il est nécessaire et en renumérotant les $b_i$ on voit qu'on peut finalement restreindre $X$
autour de $x$ de sorte $I$ soit engendré
par une famille $(a_1,\ldots, a_m,b_1,\ldots,b_p)$ avec $b_i|_{Y_x}\neq 0$ pour tout $i$.
La famille $(h_1,\ldots, h_m, b_1|_{Y_x}, \ldots, b_p|_{Y_x})$ est
alors encore une famille
fortement génératrice de $\mathrm{Ker}(v)$. En incorporant $(b_1,\ldots, b_p)$ à la famille des $a_i$ on se ramène au cas où 
cette dernière engendre $I$. 

\paragraph{}
La norme
spectrale de $\hr x\{T/r\}$ étant à valeurs dans 
$\abs{\hr x\gpm}\cdot \Gamma$, il existe  des
élément $\lambda_i\in \mathscr O_{X,x}\gpm$
tels que $\abs{\lambda_i}\cdot \norm {h_i}_{Y_x}\in \Gamma$
pour tout $i$ et tel que l'élément $\widetilde{\lambda_i(x)h_i}$ de $\hrt x[r\backslash T]$
soit de norme (de Gauß) égale
à $1$. En
restreignant $X$
autour de $x$ pour que
chaque $\lambda_i$ soit définie
et inversible sur $X$, et en multipliant $a_i$
par $\lambda_i$ pour tout $i$, on se ramène au cas où
$\rho_i:=\norm{h_i}_{Y_x}$ appartient à $\Gamma$ pour tout $i$
et où chacun des $\widetilde{h_i}$ est de norme $1$. 

\paragraph{}Pour tout $i$
l'élément $\widetilde{h_i}$ de $\hrt x [r\backslash T]$ appartient
à $\hrt x^\circ[r\backslash T]$ (car il est de norme $1$) et au noyau de $\widetilde v$. 
Complétons $(\widetilde{h_1},\ldots, \widetilde{h_m})$ en une famille génératrice
$(\widetilde{h_1},\ldots, \widetilde{h_m},\alpha_1,\ldots, \alpha_p)$ de 
$\mathrm {Ker}(\widetilde v|_{\hrt x^\circ[r\backslash T]})$ sur $\hrt x^\circ[r\backslash T]$,
avec les $\alpha_j$ tous non nuls. Pour tout $j$, choisissons $\lambda_j\in \hrt x\gpm$ tel que
$\lambda_j\alpha_j$ soit de norme $1$. L'élément $\lambda_j\alpha_j$ appartient encore à 
$\mathrm{Ker}(\widetilde v)$, et $\abs {\lambda_j}\geq 1$. Puisque 
$\alpha_j=\lambda_j\inv(\lambda_j \alpha_j)$, la famille
$(\widetilde{h_1},\ldots, \widetilde{h_m},\lambda_1\alpha_1,\ldots, \lambda_p\alpha_p)$
engendre encore  $\mathrm {Ker}(\widetilde v)\cap \hrt x^\circ[r\backslash T]$ ; par conséquent
on peut, quitte à remplacer $\alpha_j$ par $\lambda_j\alpha_j$ pour tout $j$, supposer
que les $\alpha_j$ sont tous de norme $1$. 

Chacun des $\alpha_j$ appartient à $\mathrm{Ker}(\widetilde v)$. Or comme $v$ est distingué et
comme $(h_1,\ldots,h_m)$ est une famille fortement génératrice de $\mathrm{Ker}(v)$, 
le noyau de $\widetilde v$ est engendré par les $\widetilde{h_i}$ d'après la proposition \ref{distingue-reduit}. 
Chaque $\alpha_j$ appartient donc à $(\widetilde{h_1},\ldots,\widetilde {h_m})$ et peut dès lors
se relever en un élément $\alpha'_j$ appartenant à l'idéal $(h_1,\ldots,h_m)$ de l'anneau $\kappa(x)[T]\subset \hr x \{T/r\}$.
Quitte à restreindre $X$
autour de $x$ on peut supposer que chacun des $\alpha'_j$ est la restriction d'un élément $\alpha''_j$ appartenant 
à l'idéal $(a_1,\ldots, a_m)$ de l'anneau $A[T]\subset A\{T/r\}$, et en particulier à $I$. 
En incorporant les $\alpha''_j$ à la famille des $a_i$, on se ramène au cas où
$(\widetilde{h_1},\ldots, \widetilde{h_m})$ est une famille génératrice
de $\mathrm {Ker}(\widetilde v|_{\hrt x^\circ[r\backslash T]})$ sur $\hrt x^\circ[r\backslash T]$.

\paragraph{}\label{p-norme-equations}
Pour tout $i$, on écrit  $a_i=\sum a_{i,J}T^J$, et on pose $h_{i,J}=a_{i,J}|_{Y_x}$ pour tout $(i,J)$. 

À chaque indice $i$ on associe les sous-ensembles suivants de $\N^n$ : 
\begin{itemize}[label=$\diamond$] 
\item $\mathscr J_0(i)=\{J, \abs{h_{i,J}}\cdot r^J<\rho_i\}.$
\item $\mathscr J_1(i)=\{J, \abs{h_{i,J}}\cdot r^J=\rho_i\;\;\text{et}\;\;\abs{\widetilde{h_{i,J}}}<1\}.$
\item $\mathscr J_2(i)=\{J, \abs{h_{i,J}}\cdot r^J=\rho_i\;\;\text{et}\;\;\abs{\widetilde{h_{i,J}}}=1\}.$
\end{itemize}
(Nos hypothèses entraînent que $\N^n=\mathscr J_0(i)\cup \mathscr J_1(i)\cup \mathscr J_2(i)$, 
que $\mathscr J_1(i)$ et $\mathscr J_2(i)$ sont finis, et que
$\mathscr J_2(i)\neq \emptyset$.) On pose $b_i=\sum_{J\in \mathscr J_0(i)} a_{i,J}T^J$. 

Par construction, $\norm{b_i}_{Y_x}<\rho_i$ pour tout $i$. Par propreté topologique on peut donc supposer quitte
à restreindre $X$ autour
de $x$ que chaque $b_i$ est de norme spectrale 
strictement inférieure à $\rho_i$.

Soit $X'$ le domaine affinoïde $\Gamma$-strict de $X$ défini par les conditions 
\[\{\abs{a_{i,J}}\cdot r^J\leq \rho_i\}_{1\leq i\leq m, J\in \mathscr J_1(i)}\;\;\text{et}\;\;
\{\abs{a_{i,J}}\cdot r^J= \rho_i\}_{1\leq i\leq m, J\in \mathscr J_2(i)}.\]

Le domaine $X'$ contient $x$, et 
\[\widetilde{(X',x)}=
\widetilde{(X,x)}\cap \P_{\hrt x/\widetilde k}\{\{\widetilde{h_{i,J}}\}_{J\in \mathscr J_1(i)}\cup\{\widetilde{h_{i,J}},\widetilde{h_{i,J}}\inv
\}_{J\in \mathscr J_2(i)}\}.\]
En particulier, 
$\widetilde{(X',x)}$ contient $\abs \cdot$. On peut donc
restreindre $X$ de sorte que $X'=X$. 
Il résulte alors de nos constructions que pour tout $z\in V$ et tout $i$ compris entre $1$
et $m$ la norme spectrale $\max \abs{a_{i,J}(z)}\cdot r^J$ de $a_i|_{Y_z}$ est
égale à $\rho_i$.

(On aurait pu également invoquer \ref{p-controle-norme1} et \ref{p-controle-norme2}
dans le cas où $Y=D$, où $f_i=T_i$, où $R=\hrt x^\circ[r\backslash T]$ et où $t$ est le point générique de la fibre spéciale -- une fonction appartenant à $\hrt x^\circ[r\backslash T]$ est inversible en ce point précisément si sa norme de Gauß vaut 1 ; mais la preuve ci-dessus est plus élémentaire.)

\subsubsection{}
Pour tout $i$ compris entre $0$ et $\mathsf h$ et tout point maximal 
$t$ de $(\spec R)_{\tau_i}$, il existe $G_t\in \hr x\{T/r\}$, dont on note $g_t$ l'image dans $C$, telle que les propriétés suivantes soient satisfaites : 

\begin{itemize}[label=$\diamond$]
\item les réels $\norm{G_t}_{D_x}$ et $\norm{g_t}_{Y_x}$ sont égaux
à un même élément $r'_t$ de $\Gamma$ ; 
\item la réduction $\widetilde{G_t}$ appartient à 
$\hrt x^\circ[r\backslash T]$ (ce qui entraîne que $\widetilde {g_t}$ appartient à $R$) ;
\item l'élément $\widetilde{g_t}$ de $R$ est inversible en $t$ et s'annule
en les autres points maximaux de $(\spec R)_{\tau_i}$.
\end{itemize}
(Pour construire $g_t$, on commence par choisir un élément de $R$ satisfaisant la dernière propriété, ce qui est possible car
la réunion des adhérences dans $\spec R$ des points maximaux de $(\spec R)_\tau$
autres que $t$ est un fermé de Zariski de $\spec R$ qui ne contient pas $t$ ; on elève ensuite
cet élément dans $\hrt x^\circ[r\backslash T]$ puis dans $\hr x\{T/r\}$). 

Les propriétés de $G_t$ étant stables par petites perturbations, 
on peut supposer quitte à restreindre $X$
autour de $x$ que $G_t$ est la restriction d'une fonction $\Psi_t$
appartenant à $A\{T/r\}$ ; on note $\psi_t$ l'image de $\Psi_t$ dans $B$. 

Pour tout $(i,t)$, les valuations des coefficients
de $\widetilde{G_t}$
sont toutes inférieures ou égales à $1$. Un raisonnement analogue
à celui tenu au \ref{p-norme-equations}
montre alors qu'on peut supposer quitte à restreindre $X$
que pour tout $(i,t)$ on a $\abs{\Psi_t}\leq r'_t$ sur $D$ tout entier
(ce qui entraîne que $\abs{\psi_t}\leq r'_t$ sur $Y$ tout entier). 

Par ailleurs il résulte de \ref{p-controle-norme2}
qu'on peut, quitte à encore restreindre $X$, supposer que 
pour tout $(i,t)$ et tout $z\in X_i$, la fonction 
$\abs{\psi_t}$ atteint la valeur $r'_t$ sur un domaine affinoïde $\Gamma$-strict
de $Y_z$ purement de dimension $\dim\overline{\{t\}}_{\tau_i}$.

\subsubsection{}
Puisque $(\widetilde {h_1},\ldots, \widetilde{h_m})$ engendre 
$\mathrm{Ker}(\widetilde v|_{\hrt x^\circ[r\backslash T]})$,
on a un isomorphisme de $\hrt x^\circ$-algèbres
\[\hrt x^\circ[r\backslash T]/(\widetilde {h_1},\ldots, \widetilde {h_m})
\simeq R.\]
Comme chaque $h_i$ se prolonge en la fonction 
$a_i$ sur $D$ et comme $\norm{a_i}_D=\rho_i$, les $\widetilde{h_i}$
appartiennent tous à $\mathsf A[r\backslash T]$ ; pour une raison, 
analogue, les $\widetilde{G_t}$ appartiennent
tous à $\mathsf A[r\backslash T]$. 
Pour tout $V\in \mathscr V$, posons $\mathsf M_V
=\mathsf A_V[r\backslash T]/(\widetilde{h_1},\ldots, \widetilde{h_m})$ ; on écrit $\mathsf M$ au lieu de $\mathsf M_X$.

Nous allons maintenant montrer qu'on peut restreindre $X$ de sorte que  le morphisme
$\spec \mathsf  M
\to \spec \mathsf A$ 
satisfasse les propriétés suivantes : 

\begin{enumerate}[a]
\item il est plat et à fibres géométriquement réduites ; 
\item les composantes irréductibles de ses fibres sont géométriquement irréductibles ; 
\item pour tout $i$ compris entre $0$
et $\mathsf h$, pour tout point $\sigma$ de $\spec \mathsf A$
tel que $\nu_i(\sigma)\neq 0$ et $\nu_{i+1}(\sigma)=0$, 
et pour tout point maximal $s$ de $(\spec \mathsf M)_\sigma$, il existe un point maximal $t$ de $(\spec R)_{\tau_i}$ tel que l'image $\check G_t$ de $\widetilde{G_t}$ dans
$\mathsf M$ soit inversible en $s$
et s'annule en les autres points maximaux de 
$(\spec \mathsf M)_\sigma$, et tel 
que $\dim\overline{\{s\}}_\sigma$
soit égal à $\dim \overline{\{t\}}_{\tau_i}$ ; 
\item il existe un recouvrement fini $(\Omega_j)$ de $\spec \mathsf M$ tel que pour tout $j$, chaque fibre
non vide de
$\Omega_j\to \spec \mathsf A$ soit une composante connexe de la fibre correspondante de
$\spec \mathsf M\to \spec \mathsf A$. 
\end{enumerate}

\paragraph{}
Comme $\spec R$ est plat sur $\spec \hrt x^\circ$, 
$\spec \mathsf M_V\to \spec \mathsf A_V$ est plat pour tout élément $V$ de $\mathscr V$ suffisamment petit : la
méthode \ref{ss-methode-schemas} permet en effet
de se ramener à l'assertion analogue dans le cas non gradué,
qui est le théorème 11.2.6 (ii)
de \cite{ega43}. 

\paragraph{}
Pour tout élément $V$  de $\mathscr V$, notons $E_V$ l'ensemble
des points de $\spec \mathsf A_V$ tels que la fibre correspondante de $\spec \mathsf M_V\to \spec \mathsf A_V$ soit géométriquement réduite.

Pour tout $V$, l'ensemble $E_V$ est une partie constructible de $\spec \mathsf A_V$ (on utilise  \ref{ss-methode-schemas} pour se ramener au cas non gradué, et c'est alors le théorème 9.7.7 (iii) de \cite{ega43}) ; 
elle
dépend de manière cartésienne de $V$. 

Comme les fibres de $\spec R\to \spec \hrt x^\circ$ sont géométriquement réduites, la limite projective des $\spec \mathsf M_V\setminus E_V$ est vide ; par compacité constructible, il existe $V$ tel que $E_V=\spec \mathsf M_V$. 

\paragraph{}
Soit $i$ compris entre $0$ et $\mathsf h$ et soit $t$ un point maximal 
de $(\spec R)_{\tau_i}$. Pour tout élément $V$  de $\mathscr V$  notons $E^1_V$ l'ensemble des points $\sigma$ de $\spec \mathsf A_V$ tels
que $\nu_i(\sigma)\neq 0$, tels que $\nu_{i+1}(\sigma)=0$
et tels que la fibre en $\sigma$ de l'ouvert $\mathrm D(\check G_t)$ de $\spec \mathsf M_V$ soit géométriquement irréductible
et de dimension $\dim \overline{\{t\}}_{\tau_i}$. C'est une partie constructible de 
$\spec \mathsf A_V$ (on utilise  \ref{ss-methode-schemas} pour se ramener au cas non gradué, et on combine
alors la proposition 9.2.6 et
le théorème 9.7.7 (i) de \cite{ega43}) ; elle dépend de manière cartésienne de $V$. 
Comme la partie correspondante de $\spec \hrt x^\circ$ est égale à $\{\tau_i\}$, c'est-à-dire précisément
à l'ensemble des points de $\spec \hrt  x^\circ$ en lesquels $\nu_i$ est inversible et $\nu_{i+1}$ nul, un argument de compacité constructible
permet là encore de conclure que $E^1_V$ est pour $V$  assez petit 
l'ensemble des points $\sigma$ de $\spec \mathsf A_V$ tels que 
$\nu_i(\sigma)\neq 0$ et $\nu_{i+1}(\sigma)=0$. 

\paragraph{}
Soit $i$ compris entre $0$ et $\mathsf h$. Pour tout élément $V$  de $\mathscr V$
notons $E^2_V$ 
l'ensemble des points $\sigma$ de $\spec \mathsf A_V$
tels que $\nu_i(\sigma)\neq 0$, tels que $\nu_{i+1}(\sigma)=0$
et tels que la réunion des ouverts $\mathrm D(\check G_t)$ de $\spec \mathsf M_V$ pour $t$ parcourant l'ensemble des points
maximaux de $(\spec R)_{\tau_i}$ contienne 
une partie dense de $(\spec \mathsf M_V)_\sigma$.
C'est une partie constructible de 
$\spec \mathsf M_V$ (on utilise  \ref{ss-methode-schemas} pour se ramener au cas non gradué, et c'est alors la proposition 9.5.3 de \cite{ega43}) ; elle dépend de manière cartésienne de $V$. 
Comme la partie correspondante de $\spec \hrt x^\circ$ est égale à $\{\tau_i\}$, c'est-à-dire précisément
à l'ensemble des points de $\spec \hrt  x^\circ$ en lesquels $\nu_i$ est inversible et $\nu_{i+1}$ nul, un argument de compacité constructible
permet là encore de conclure que $E^2_V$ est pour $V$  assez petit 
l'ensemble des points $\sigma$ de $\spec \mathsf A_V$ tels que 
$\nu_i(\sigma)\neq 0$ et $\nu_{i+1}(\sigma)=0$. 

\paragraph{}
En vertu de la proposition \ref{prop-schema-decoupage} il existe un recouvrement ouvert fini 
$(\Upsilon_j)_j$ de $\spec R$ tel que pour tout $j$, chaque fibre
non vide de $\Upsilon_j\to \spec \hrt x^\circ$
soit une composante connexe de la fibre correspondante de $\spec R\to \spec \hrt x^\circ$. 
Chaque $\Upsilon_j$ est quasi-compact ; si on prend $V$ suffisamment petit, on peut donc
faire en sorte que $\Upsilon_j$ provienne pour tout $j$ d'un ouvert $\Omega_{j,V}$ de $\spec \mathsf M_V$,
dépendant de manière cartésienne de $V$. 

Les $\Upsilon_j$ recouvrant $\spec R$, un argument de compacité constructible
montre que si $V$ est assez petit les $\Omega_{j,V}$ recouvrent $\spec \mathsf M_V$. 

Pour tout $j$ et tout $V$ assez petit pour que cela ait un sens, notons $E^3_{j,V}$ l'image de $\Omega_{j,V}$ sur $\spec \mathsf A_V$, et $E^4_{j,V}$ le sous-ensemble
de $E^3_{j,V}$ formé des points en lesquels la fibre de $\Omega_{j,V}\to \spec \mathsf A_V$ est géométriquement connexe. 
Les ensembles $E^3_{j,V}$ et $E^4_{j,V}$ sont constructibles (pour le second, cela découle du corollaire 
9.7.9 de \cite{ega43}) ; ils dépendent de manière cartésienne
de $V$, et les deux sous-ensembles correspondants de $\spec \hrt x^\circ$ coïncident par choix
des $\Upsilon_j$, et parce que les composantes irréductibles des fibres de $\spec R\to \spec \hrt x^\circ$
sont géométriquement irréductibles. Il s'ensuit par un argument de compacité constructible qu'on 
a pour $V$
suffisamment petit l'égalité $E^3_{j,V}=E^4_{j,V}$ pour tout $j$, ce qui signifie que les fibres de
$\Omega_{j,V}\to 
\spec \mathsf A_V$ sont géométriquement connexes. 

Enfin,  soient $j$ et $\ell$ deux indices distincts.
Pour tout $V$ suffisamment petit pour que cela ait un sens, notons $E^5_{j,\ell, V}$
et $E^6_{j,\ell,V}$ les images respectives de
$\Omega_{j,V}\cap \Omega_{\ell,V}$ et
$\Omega_{j,V}\triangle \Omega_{\ell,V}$
sur $\spec \mathsf A_V$. Ce sont des parties constructibles de 
$\spec \mathsf A_V$, dépendant de lanière cartésienne de $V$ ; par construction, les parties correspondantes de $\spec \hrt x^\circ$ 
sont disjointes. En utilisant une dernière fois un argument de compacité constructible, 
on en déduit que si $V$ est assez petit alors
$E^5_{j,\ell, V}\cap E^6_{j,\ell,V}=\emptyset$, ce qui signifie que pour tout $\sigma\in \spec \mathsf A_V$, les fibres de $\Omega_{j,V}$ et $\Omega_{\ell,V}$
en $\sigma$ sont ou bien disjointes, ou bien égales.

\paragraph{}
Il résulte des paragraphes précédents qu'on peut restreindre $X$ de sorte que $\spec \mathsf M\to \spec \mathsf A$ satisfasse
les propriétés (a), (b), (c) et (d) requises. 

\subsubsection{Fin de la démonstration}
Soit $\widetilde x$ l'image de $x$ sur $\widetilde X$ et soit $\Omega$
l'image réciproque de $\overline{\{\widetilde x\}}$ sur $X$. C'est un voisinage ouvert
de $x$ dans $X$. Soit $V$ un voisinage affinoïde $\Gamma$-strict de $x$ dans $\Omega$.
Le morphisme $\widetilde A\to \widetilde{A_V}$ induit par l'inclusion $V\hookrightarrow X$
se factorise alors par l'image de $\widetilde A$ dans $\kappa(\widetilde x)$, qui s'identifie elle-même
à $\mathsf A$. Modulo cette identification, l'image de $a_i|_{D_V}$ dans $\widetilde{A_V}[r\backslash T]$ est égale à $\widetilde{h_i}$ pour tout $i$, celle de $\Psi_t|_{D_V}$ 
est égale à $\widetilde{G_t}$ pour tout $t$, et celle de $\theta_i$ est égale à $\nu_i$ pour tout $i$. 

En remplaçant $X$ par $V$ on se ramène alors
au cas où
le
morphisme
\[q\colon \spec \widetilde A[r\backslash T]/(\widetilde{a_1},\ldots,\widetilde{a_m})\to \widetilde X\]
satisfait les propriétés suivantes : 
\begin{enumerate}[a]
\item il est plat et à fibres géométriquement réduites ; 
\item les composantes irréductibles de ses fibres sont géométriquement irréductibles ; 
\item pour tout $i$ compris entre $0$
et $h$, pour tout point $\sigma$ de $\spec \widetilde A$
appartenant à $\mathrm D(\widetilde{\theta_i})\setminus 
\mathrm D(\widetilde{\theta_{i+1}})$ 
et pour tout point maximal $s$ de $q\inv(\sigma)$, il existe un point maximal $t$ de $(\spec R)_{\tau_i}$ tel que l'image $\check \Psi_t$ de $\widetilde{\Psi_t}$ dans
$ \widetilde A[r\backslash T]/(\widetilde{a_1},\ldots,\widetilde{a_m})$ soit inversible en $s$
et s'annule en les autres points maximaux de 
$q\inv(\sigma)$, et telle que $\dim \overline{\{s\}}_\sigma=\dim\overline {\{t\}}_{\tau_i}$ ; 
\item il existe un recouvrement ouvert
fini $(\Omega_j)$ de 
$\spec \widetilde A[r\backslash T]/(\widetilde{a_1},\ldots,\widetilde{a_m})$
tel que pour tout $j$ et tout $\xi \in \widetilde X$, l'intersection $\Omega_j\cap q\inv(\xi)$ soit
ou bien vide, ou bien une composante connexe de $q\inv(\xi)$. 
\end{enumerate}
Soit $z$ un point de $X$.
Soit $i$ un entier compris entre $0$ et $\mathsf h$ tel que $z\in X_i$, et minimal
pour cette propriété (rappelons que comme $\theta_{\mathsf h}=1$, le point $z$ appartient automatiquement à $X_{\mathsf h}$).
L'image de $z$ sur $\spec \widetilde A$
appartient alors à $\mathrm D(\widetilde{\theta_i})\setminus \mathrm D(\widetilde{\theta_{i+1}})$. 
Par ce qui précède,
$\spec \hrt z[r\backslash T]/(\widetilde {a_1|_{Y_z}}, \ldots, \widetilde{a_m|_{Y_z}})$
est géométriquement réduit, et ses composantes irréductibles sont géométriquement intègres. 

Par ailleurs, soit $s$ un point maximal de  $\spec \hrt z[r\backslash t]/(\widetilde {a_1|_{Y_z}}, \ldots, \widetilde{a_m|_{Y_z}})$. Il existe alors un point maximal $t$ de $(\spec R)_{\tau_i}$ tel que
$\check \Psi_t$ soit inversible en $s$ et s'annule en tous les autres points maximaux de $\spec \hrt z[r\backslash t]/(\widetilde {a_1|_{Y_z}}, \ldots, \widetilde{a_m|_{Y_z}})$,
et 
tel que $\dim \overline{\{s\}}=\dim \overline{\{t\}}_{\tau_i}$.
L'image de $\Psi_t$ dans $B$ est égale à $\psi_t$. 
On a $\norm{\Psi_t}_{D_z}=r'_t$ ; et comme $z$ appartient à $X_i$, il existe un domaine affinoïde 
$\Gamma$-strict non vide de $Y_z$, purement de dimension $\dim \overline{\{t\}}_{\tau_i}$,
sur lequel $\abs{\psi_t}$ prend la valeur $r'_t$. 
On déduit alors de la proposition \ref{prop-univdistingue-1} et du critère \ref{ss-critere-utile},
qui garantit que les hypothèses de cette dernière sont satisfaites ici,
que $B\simeq A\{T/r\}/(a_1,\ldots, a_m)$ est une présentation $\Gamma$-sympathique de $B$ sur $A$.

\end{proof}

\appendix
\section{Autour de la dimension des espaces analytiques}

Le but de cet appendice est de démontrer différents résultats auxiliaires en théorie de la dimension, qui sont utilisés dans l'article et peuvent avoir par ailleurs leur intérêt propre. 

Le premier de ces énoncés est certainement bien connu ; nous en donnons une démonstration pour la commodité du lecteur. 

\begin{lemm}\label{lem-dim-xtilde}
Soit $X$ un espace $k$-affinoïde, soit $(X_i)$ la famille
de ses composantes irréductibles et pour tout $i$, soit $d_i$
la dimension de $X_i$. Soit $\rho \colon X\to \widetilde X$
l'application de réduction. 

\begin{enumerate}[1]
\item Pour tout $i$, l'image $\rho(X_i)$ est un fermé de $\widetilde X$. 

\item Désignons pour tout $i$ par $(\mathsf X_{ij})_j$ la famille
des composantes irréductibles de $\rho(X_i)$. On a alors
$\dim \mathsf X_{ij}=d_i$ quel que soit $i$, et $(\mathsf X_{ij})_{i,j}$ est
la famille des composantes irréductibles de $\widetilde X$.
\end{enumerate}
\end{lemm}

\begin{proof}
Fixons $i$ et munissons $X_i$ (par exemple) de sa structure réduite. L'immersion fermée $X_i\hookrightarrow X$
étant en particulier un morphisme fini, 
le morphisme induit $\widetilde {X_i}\to \widetilde X$ est fini
(\cite{temkin2004}, Prop. 3.1. (iii)), et par conséquent fermé ; or par surjectivité de $X_i\to \widetilde {X_i}$ l'image de $\widetilde {X_i}\to \widetilde X$ coincide avec $\rho(X_i)$, ce qui achève de démonter (1).

Prouvons maintenant (2). Fixons $i$. Pour montrer que $\rho(X_i)$ est purement de dimension $d_i$, il suffit de démontrer que c'est le cas de $\widetilde X_i$. Soit $\alpha$ un élément non nul de $\widetilde{\mathscr O_{X_i}(X_i)}$ et soit $r$ son degré ; soit $a$ une fonction analytique sur $X_i$ de rayon spectral $r$ et telle que $\widetilde a=\alpha$. L'image réciproque $V$ de $D(\alpha)$ sur $X_i$
est le domaine affinoïde non vide de $X_i$ défini par la condition
$\abs a=r$ et l'on a $\widetilde V=D(a)$ (\cite{temkin2004}, 
Prop. 3.1 (ii)). Puisque $V$ est un domaine affinoïde non vide de $X_i$, il
est de dimension $d_i$. On sait alors que
$\widetilde V=D(\alpha)$ est de dimension $d_i$
(utiliser la proposition 3.1 (i) de \cite{temkin2004} pour se réduire au cas strict, et appliquer la remarque de \cite{bosch-g-r1984}
suivant le corollaire 3 de \S 6.3.4). Ceci valant pour tout élément $\alpha$ non nul de $\widetilde{\mathscr O_{X_i}(X_i)}$, le schémoïde $X_i$ est
purement de dimension $d_i$. 

Vérifions enfin que les $\mathsf X_{ij}$ sont bien les composantes irréductibles de $\widetilde X$. 
Les points génériques des composantes irréductibles de $\widetilde X$ sont exactement les images des points du bord de Shilov de $X$ ; et pour tout $i$, les points génériques des composantes irréductibles de  $\widetilde {X_i}$ sont exactement les images des points du bord de Shilov de $X_i$. Puisque $\rho(X_i)$ est pour tout $i$ l'image du morphisme fini $\widetilde X_i\to \widetilde X$, il suffit pour conclure de montrer que le bord de Shilov de $X$ est la réunion des bords des Shilov des $X_i$. On peut évidemment pour ce faire supposer $X$ non vide. 

Soit $S$ le bord de Shilov de
$X$. Pour tout $i$, notons $S_i$ le bord de Shilov de $X_i$. Soit $f$ une fonction analytique sur $X$. Puisque $f|_{S_i}$ atteint pour tout $i$ son maximum en valeur absolue
sur $S_i$, la fonction $f$ atteint son maximum en valeur absolue sur $\bigcup S_i$ (on se sert ici du fait que $X$ est non vide) ; par conséquent, $S\subset \bigcup S_i$. 
Réciproquement,
soit $i$ un indice et soit $x$ un point de $S_i$.
Par définition il existe une fonction $g$ sur $X_i$
telle que $\abs {g(x)}=r>0$ et $\abs{g(y)}<r$ pour tout $y\in S_i\setminus\{g\}$.
On peut relever $g$ en une fonction sur $X$, que l'on note encore $g$. Soit $h$ une fonction sur $X$ dont la restriction à $X_i$
est génériquement inversible, et qui s'annule en tous les points de toutes les autres composantes de $X$.
Comme $\widetilde {X_i}$ est purement de dimension $d_i$
(on l'a vu plus haut), l'image de $x$ sur $\widetilde {X_i}$
est le point générique d'un fermé de dimension $d_i$, et le corpoïde $\widetilde {\hr x}$
est donc de degré de transcendance $d_i$ sur $\widetilde k$,
ce qui interdit à $x$ de vivre sur un fermé de Zariski strict de l'espace irréductible $X_i$.
Il s'ensuit que $h(x)\neq 0$, puis qu'il existe un entier $N\geq 1$
tel que $\abs{g(x)^Nh(x)}>\abs{g(y)^Nh(y)}$ pour tout $y\in S_i\setminus \{x\}$. Mais alors 
$x$ est l'unique point de $\bigcup_j S_j$ en lequel $\abs{g^Nh}$ atteint son maximum (en effet, $g^Nh$ s'annule
par construction
en tout point de $S_j$ si $j\neq i$) ; par
conséquent, $x\in S$.
\end{proof}

Le second de ces énoncés est la déclinaison analytique d'une version un peu affaiblie
de \cite{ega43}, th. 12.1.1 (iv). Nous utiliserons librement les propriétés $(S_n)$ de Serre (pour $n=1$ ou $2$) et leurs variantes analytiques, 
leur invariance par extension des scalaires, et les prinicipes GAGA à leur sujet 
(\emph{cf.} \cite{ducros2018}, Lemma-Definition 2.4.3, Lemma 2.4.6 et Prop. 2.6.7) ; nous utiliserons également les propriétés de base des composantes irréductibles
en géométrie analytique, rappelées à la section 1.5 de \emph{op. cit.}

Si $\phi \colon Y\to X$ est un morphisme d'espaces analytiques nous dirons que $\phi$ est \emph{de dimension $d$} (resp. \emph{purement de dimension $d$}) en un point $y$ de $Y$ si la fibre $Y_{\phi(y)}$ est de dimension $d$ (resp. purement de dimension $d$) en $y$. 

\begin{theo}\label{theo-dimrel-s1}
Soit $\phi \colon Y\to X$ un morphisme plat entre espaces $k$-analytiques dont les fibres satisfont la propriété $(S_1)$ de Serre. 
Soit $d$ un entier. L'ensemble des points $y$ de $Y$
en lesquels $\phi$ est purement de dimension $d$ est un ouvert de Zariski de $Y$. 
 \end{theo}

 \begin{proof}
Soit $\Omega$ l'ensemble des points de $Y$ en lesquels $\phi$ est purement de dimension $d$. Pour montrer que $\Omega$ est un ouvert de Zariski on peut raisonner G-localement sur $X$ et $Y$, et partant les supposer tous deux affinoïdes.

\subsubsection{Réduction au cas strict}
Soit $r$ un polyrayon $k$-libre tel que $\abs{k_r\gpm}\neq \{1\}$ et tels que $X_r$ et $Y_r$ soient strictement $r$-affinoïdes (où le $r$ en indice désigne l'extension des scalaires de $k$ à $k_r$) ; l'image réciproque $\Omega_r$ de $\Omega$ sur $Y_r$ est exactement l'ensemble des points de $Y_r$ en lesquels
$\phi_r$ est purement de dimension $d$. Soit $\mathfrak s\colon Y\to Y_r$ la section de Shilov de $Y_r\to Y$
(\emph{cf.} \cite{ducros2018}, 1.2.16) ; on a $\Omega=\mathfrak s^{-1}(\Omega_r)$ et il suffit par conséquent
de montrer que $\Omega_r$ est un ouvert de Zariski de $Y_r$ (il résulte en effet de la
description de $\mathfrak s$ par
des formules explicites
qu'elle est continue pour les topologies de Zariski de ses source et but). On peut donc supposer que la valeur absolue de $k$ n'est pas triviale et que $X$ et $Y$ sont strictement $k$-affinoïdes.

\subsubsection{}
On fixe $y$ appartenant à $\Omega$. 
Nous allons montrer qu'il existe 
un ouvert de Zariski de $Y$ contenu dans $\Omega$
et contenant $y$, ce qui permettra de conclure.

\subsubsection{}
Soit  $\delta \colon Y\to Y\times_X Y$
l'immersion diagonale, et soit $\Omega'$ l'ensemble des points de $Y\times_X Y$ en lesquels la seconde projection est
purement de dimension $d$. On a alors $\Omega=\delta^{-1}(\Omega')$, 
et il suffit donc pour conclure de montrer qu'il existe un ouvert de Zariski de $Y\times_X Y$ contenu dans $\Omega'$ 
et contenant $\delta(y)$. Quitte à remplacer $Y\to X$ par $Y\times_X Y\to Y$ et $y$ par $\delta(y)$, on peut donc supposer que 
$Y\to X$ possède une section $\sigma$ et que
$y=\sigma(x)$ pour un certain $x\in X$.

\subsubsection{}\label{sss-dims1-compo}
Soit $(X_i)_{i\in I}$ la famille des composantes irréductibles de $X$, et soit $J$ l'ensemble des $i$ tel que $x\in X_i$ ; munissons chacune des $X_i$ de sa structure réduite et posons $Y_i=Y\times_X X_i$. Supposons avoir montré que pour tout $i\in J$ il existe un fermé de Zariski $F_i$ de $Y_i$
ne contenant pas $y$ tel que $Y_i\to X_i$ soit purement de dimension $d$ en tout point de $Y_i\setminus F_i$, et posons $F=\bigcup_{i\in J}F_i\cup\bigcup_{i\in I\setminus J}Y_i$. Alors $F$ est un fermé de Zariski de $Y$ et le morphisme $\phi$ est 
purement de dimension relative $d$ en tout point de l'ouvert $Y\setminus F$, lequel contient $y$.

Il suffit donc de démontrer l'assertion pour chaque composante irréductible réduite de $X$. 
Cette remarque permet de se ramener au cas où $X$ est intègre, c'est-à-dire irréductible et réduit ; on note $e$ sa dimension. 

\subsubsection{}\label{sss-dims1-dimcomp}
Puisque le morphisme
$\phi$ est plat, on a pour tout
point $z$ de $Y$ l'égalité
$\dim_z Y=\dim_{\phi(z)}X+\dim_z Y_{\phi(z)}$
(\cite{ducros2018}, Lemma 4.5.11).
Il en résulte immédiatement que les assertions suivantes sont équivalentes
(ici $X$ est irréductible de dimension $e$, mais on peut remarquer que ces équivalences
seraient encore avérées si l'on supposait simplement
que $X$
est purement de dimension $e$) : 

\begin{enumerate}[i]

\item il existe un ouvert de Zariski de $Y$ contenu dans $\Omega$ et contenant $y$ ; 
\item toute composante irréductible de $Y$ contenant $y$ est de dimension $d+e$ ; 
\item toutes les composantes irréductibles de $Y$ contenant $y$ ont même dimension. 
\end{enumerate}
Nous avons pour objectif de démontrer (i) ; il revient
donc au même d'établir (ii) ou (iii). Nous allons raisonner par récurrence sur $e$. 

\subsubsection{Le cas où $e=0$}
On a alors $X=\{x\}$ et $\phi^{-1}(x)=Y$ ; dire que
$y\in \Omega$ revient à dire que toute composante irréductible de $Y$ passant par $y$ est 
de dimension $d$, ce qui entraîne immédiatement le résultat voulu puisque la condition (iii) de 
\ref{sss-dims1-dimcomp} est satisfaite ou, plus directement, puisque la dimension de $\phi$ en un point coïncide ici avec
celle de $Y$. 

\subsubsection{On suppose que $e>0$ et que le résultat vaut en dimension $<e$}
On note $\mathscr X$ et $\mathscr Y$ les spectres respectifs de $\mathscr O_X(X)$ et $\mathscr O_Y(Y)$ ; on désigne par $\eta$ l'image de $y$ sur $\mathscr Y$ et par $\xi$ celle de $x$ sur $\mathscr X$. 

\paragraph{Supposons que $\xi$ est le point générique de $\mathscr X$}
Si $y$ est situé sur une seule composante irréductible de $Y$, 
la condition (ii) de \ref{sss-dims1-dimcomp} est satisfaite et le théorème est démontré. Supposons maintenant que
$y$ soit situé sur au moins deux composantes irréductibles de $Y$.

Soit $Z$ une
composante irréductible de $Y$ contenant $y$,
soit $U$ l'ouvert de $Z$ constitué des points qui n'appartiennent pas à d'autres composantes irréductibles de 
$Y$, et soit $Z'$ le complémentaire de $U$ dans $Z$ ; remarquons que $U$ est un ouvert dense de $Z$,
et que $y\in Z'$ par hypothèse. 

Soit $T$ (resp. $T'$) l'intersection 
$\sigma(X)\cap Z$ (resp. $\sigma(X)\cap Z'$) ; on munit $Z, Z', T$ et $T'$ de leurs
structures réduites, et l'on note $\mathscr Z$ le spectre de $\mathscr O_Z(Z)$. Puisque $\xi$ est le point générique du schéma
intègre $\mathscr X$, la flèche $\mathscr Z\to  \mathscr X$ est plate en $\eta$. Comme le point $y$ est situé sur le sous-espace analytique fermé $T$ de $Z$ qui est fini sur $X$, la $\mathscr X$-platitude
de $\mathscr Z$ en
$\eta$ entraîne la $X$-platitude de $Z$ en $y$ en vertu du théorème 8.3.7 de \cite{ducros2018}. Le même argument appliqué à $Z'$ et $T'$ au lieu de $Z$ et $T$ montre que 
$Z'$ est $X$-plat en $y$. 

Comme $y=\sigma(x)$, le point $y$ appartient à l'intérieur relatif de $Y$ sur $X$, et \emph{a fortiori} à celui de $Z$ sur $X$. La $X$-platitude de $Z$ et $Z'$ en $y$ implique alors que $y$ appartient à l'adhérence de la fibre $U_x$ (\cite{ducros2018}, prop. 10.4.2). Or $U$ est un ouvert de $Y$ purement de dimension $\dim Z$ ; par platitude, les fibres de $U\to X$ sont donc
purement de dimension $\dim Z-e$. Puisque $y$ est adhérent à $U_x$, l'une au moins des composantes irréductibles de $\phi^{-1}(x)$ passant par $y$ est de dimension $\dim Z-e$ ; comme $\phi$ est par hypothèse purement de dimension $d$ en $y$, il vient $\dim Z=d+e$ ; comme ceci vaut pour toute composante irréductible $Z$ de $Y$ contenant $y$, l'assertion (ii) de \ref{sss-dims1-dimcomp} est établie, et le théorème est démontré. 

\paragraph{On suppose que $\xi$ n'est pas le point générique de $\mathscr X$}
Soit $\widetilde X$ le normalisé de $X$ et soit $U$ le lieu de normalité de $X$. Posons $\widetilde Y=Y\times_X{\widetilde X}$, et notons
$\widetilde \sigma$
la section de $\widetilde Y\to \widetilde X$ déduite de $\sigma$. 

Soit $V$ (resp. $\widetilde V$) l'image réciproque sur $Y$ (resp. $\widetilde Y$) de $U$. 
Comme $X$ est réduit, $U$ est un ouvert dense de $X$ ; il en résulte par platitude de $Y\to X$ que $V$ est un ouvert dense de $Y$
(\cite{ducros2009}, lemme 5.7). 

Soit $Z$ une composante irréductible de $Y$ contenant $y$. L'intersection $Z\cap V$ est un ouvert dense de $Z$, qui est
irréductible (\emph{cf}. \cite{ducros2018}, Prop. 2.7.16) ; 
c'est une composante irréductible de $V\simeq \widetilde V$, et son adhérence $\widetilde Z$ dans $\widetilde Y$ est donc une composante irréductible de $\widetilde Y$, de même dimension que $Z$ et se surjectant sur celle-ci ; en particulier, $\widetilde Z$ contient un antécédent $y'$ de $y$.

Par construction, $\widetilde Y\to \widetilde X$ est purement de dimension $d$ en $y'$, le point $y'$
appartient à $\widetilde \sigma(\widetilde X)$, et son image sur le schéma intègre
$\spec \mathscr O_{\widetilde X}(\widetilde X)$ n'est pas le point générique.
Il suffit pour conclure de montrer que toutes les composantes irréductibles de $\widetilde Y$ contenant $y'$ ont même dimension ; 
en effet si c'est le cas celle-ci est nécessairement égale à $d+e$
(c'est l'équivalence (ii)$\iff$(iii) de \ref{sss-dims1-dimcomp}), ce qui entraîne que $Z$ est de dimension $d+e$ ; puisque cela vaut pour tout $Z$, l'assertion (ii) de  \ref{sss-dims1-dimcomp}
est  établie,  ce qui termine la preuve du théorème. 

On s'est donc ramené au cas  où $X$ est normal, et c'est
désormais l'assertion (iii) de \ref{sss-dims1-dimcomp}
que l'on cherche à montrer.

Puisque $\xi$ n'est pas le point générique de $\mathscr X$, il existe une fonction non nulle
$f$ sur $\mathscr X$ s'annulant en
$\xi$ ; soit $\mathscr T$ le sous-schéma fermé de $\mathscr X$ 
défini par l'équation $f=0$ ; on déduit du \emph{Hauptidealsatz}
et du lemme 2.7.13 (1)
de \cite{ducros2018}
que $T:=\mathscr T\an$ est
purement de dimension $e-1$.
Par notre hypothèse de récurrence, 
$y$ possède pour toute composante irréductible
$S$ de $T$ un voisinage ouvert de Zariski $V_S$ dans $Y\times_X S$ tel que $V_S\to S$ soit purement de dimension  $d$ en chacun de ses points. 
Il en résulte d'après \ref{sss-dims1-compo} qu'il existe un voisinage ouvert de Zariski $V$ de $y$ dans $Y\times_X
T$ tel que $V\to T$ soit purement de dimension $d$ en chacun de ses points ; en vertu de \ref{sss-dims1-dimcomp},
les composantes irréductibles de $Y\times_X T$ contenant $y$ sont
alors toutes de même dimension (à savoir $d+e-1$). 

Comme la valeur absolue de $k$ n'est pas triviale
et comme les espaces en jeu sont strictement
affinoïdes, toute composante irréductible du schéma $\mathscr Y\times_{\mathscr X}\mathscr T$
est de dimension $d+e-1$
(\cite{ducros2018}, lemma 2.7.6).
Soit $\epsilon$ la dimension de $\overline{\{\eta\}}\an$ ; le schéma $\spec \mathscr O_{\mathscr Y\times_{\mathscr X}\mathscr T,
\eta}$
est alors purement de dimension $d+e-1-\epsilon$
(\emph{op. cit}., Lemma 2.7.13 (1) ; notons qu'on n'a pas besoin ici
que la valeur absolue de $k$ soit non triviale ni que les espaces affinoïdes en jeu soient
stricts). 

En utilisant
à nouveau les lemmes 2.7.3 (1) et 2.7.6 de \cite{ducros2018}, 
on voit qu'il suffit pour prouver l'assertion (ii)
de \ref{sss-dims1-compo}
de démontrer que $\spec \mathscr O_{\mathscr Y,
\eta}$ est équidimensionnel. Or : 

\begin{itemize}[label=$\diamond$]

\item $\spec \mathscr O_{\mathscr Y\times_{\mathscr X}\mathscr T,
\eta}$ est équidimensionnel comme on vient de le voir ; 

\item $\mathscr O_{\mathscr Y\times_{\mathscr X}\mathscr T,\eta}$
est le quotient de $\mathscr O_{\mathscr Y,\eta}$ par $f$, qui n'est pas diviseur de zéro (puisqu'il ne l'est pas dans l'anneau intègre
$\mathscr O_{\mathscr X,x}$ sur lequel $\mathscr O_{\mathscr Y,\eta}$ est plat) ; 

\item l'anneau $\mathscr O_{\mathscr Y,\eta}$ est caténaire puisque $\mathscr Y$ est de type fini sur une algèbre de la forme $k\{T_1/r_1,\ldots, T_n/r_n\}$ qui est régulière (\emph{cf.}
\cite{ducros2009} pour une preuve élémentaire) et partant universellement caténaire. 

\end{itemize}
Il suffit alors désormais pour conclure, grâce à la proposition 5.12.2 de \cite{ega42},
de montrer que  $\mathscr O_{\mathscr Y\times_{\mathscr X}\mathscr T,\eta}$ satisfait $(S_1)$. 

Le schéma $\mathscr X$ est normal, et partant possède la propriété $(S_2)$
de Serre. Son sous-schéma fermé
$\mathscr T$ étant défini localement par une équation régulière, il satisfait $(S_1)$ ; par conséquent, l'espace analytique $T$
satisfait $(S_1)$. 
On en déduit, compte-tenu du fait que $Y\times_X T\to T$ est plat et que ses fibres satisfont
la propriété $(S_1)$ par hypothèse, que $Y\times_X T$ satisfait $(S_1)$
(\cite{ducros2018}, Thm. 11.3.3) ; en conséquence,
l'anneau
local $\mathscr O_{\mathscr Y\times_{\mathscr X}\mathscr T,\eta}$
satisfait $(S_1)$. 
 \end{proof}

\bibliographystyle{smfalpha}
\bibliography{aducros}

\end{document}